\numberwithin{equation}{section}
\def\@cite#1#2{{\m@th\upshape\bfseries%
[{#1\if@tempswa{\m@th\upshape\mdseries, #2}\fi}]}}
\theoremstyle{plain}
\newtheorem{theorem}{Theorem}[section]
\newtheorem{corollary}[theorem]{Corollary}
\newtheorem{proposition}[theorem]{Proposition}
\theoremstyle{definition}
\newtheorem{definition}[theorem]{Definition}
\newtheorem{example}[theorem]{Example}
\newtheorem{remark}[theorem]{Remark}
\newtheorem*{acknow}{Acknowledgements}
\theoremstyle{remark}
  \newcommand{\A}{{\mathcal{A}}}
  \newcommand{\B}{{\mathcal{B}}}
  \newcommand{\F}{{\mathcal{F}}}
  \newcommand{\G}{{\mathcal{G}}}
  \newcommand{\K}{{\mathcal{K}}}
\renewcommand{\L}{{\mathcal{L}}}
\renewcommand{\O}{{\mathcal{O}}}
\renewcommand{\S}{{\mathcal{S}}}
  \newcommand{\T}{{\mathcal{T}}}
\newcommand{\eps}{\varepsilon}
\def\al{\alpha}
\def\be{\beta}
\def\ga{\gamma}
\def\de{\delta}
\def\ze{\zeta}
\def\ka{\kappa}
\def\la{\lambda}
\def\La{\Lambda}
\def\om{\omega}
\def\si{\sigma}
\newcommand\vphi{\varphi}
\newcommand{\bC}{\mathbb{C}}
\newcommand{\bF}{\mathbb{F}}
\newcommand{\bN}{\mathbb{N}}
\newcommand{\bT}{\mathbb{T}}
\newcommand{\bZ}{\mathbb{Z}}
\newcommand{\bR}{\mathbb{R}}
\newcommand{\AND}{\text{ and }}
\newcommand{\FOR}{\text{ for }}
\newcommand{\foral}{\text{ for all }}
\newcommand{\qand}{\quad\text{and}\quad}
\newcommand{\qfor}{\quad\text{for}\; }
\newcommand{\ca}{\mathrm{C}^*}
\newcommand{\ol}{\overline}
\newcommand{\ad}{\operatorname{ad}}
\newcommand{\Aut}{\operatorname{Aut}}
\newcommand{\Avt}{\operatorname{AVT}}
\newcommand{\diag}{\operatorname{diag}}
\newcommand{\End}{\operatorname{End}}
\newcommand{\Eq}{\operatorname{E}}
\newcommand{\fty}{\operatorname{fin}}
\newcommand{\id}{{\operatorname{id}}}
\newcommand{\im}{{\operatorname{Im}}}
\newcommand{\mt}{\emptyset}
\newcommand{\spn}{\operatorname{span}}
\newcommand{\sumoplus}{\operatornamewithlimits{\sum \strut^\oplus}}
\newcommand{\supp}{\operatorname{supp}}
\newcommand{\Tr}{\operatorname{T}}
\newcommand{\sca}[1]{\langle#1\rangle} 
\newcommand{\nor}[1]{\Vert #1\Vert} 
\begin{document}

\title[The equilibrium states of Pimsner algebras]{Entropy theory for the parametrization of the equilibrium states of Pimsner algebras}

\author[E.T.A. Kakariadis]{Evgenios T.A. Kakariadis}
\address{School of Mathematics, Statistics and Physics\\ Newcastle University\\ Newcastle upon Tyne\\ NE1 7RU\\ UK}
\email{evgenios.kakariadis@ncl.ac.uk}

\thanks{2010 {\it  Mathematics Subject Classification.} 46L30, 46L55, 46L08, 58B34.}

\thanks{{\it Key words and phrases:} KMS-condition, equilibrium states, C*-correspondences, Pimsner algebras.}

\begin{abstract}
We consider Pimsner algebras that arise from C*-corres\-pondences of finite rank, as dynamical systems with their rotational action.
We revisit the Laca-Neshveyev classification of their equilibrium states at positive inverse temperature along with the parametrizations of the finite and the infinite parts simplices by tracial states on the diagonal.
The finite rank entails an entropy theory that shapes the KMS-structure.
We prove that the infimum of the tracial entropies dictates the critical inverse temperature, below which there are no equilibrium states for all Pimsner algebras.
We view the latter as the entropy of the ambient C*-correspondence.
This may differ from what we call strong entropy, above which there are no equilibrium states of infinite type.
In particular, when the diagonal is abelian then the strong entropy is a maximum critical temperature for those.
In this sense we complete the parametrization method of Laca-Raeburn and unify a number of examples in the literature.
\end{abstract}

\maketitle

\section{Introduction}

The Fock space construction gives a concrete quantization of systems in terms of Hilbertian operators.
Originating from Quantum Mechanics, it has seen an important generalization to Hilbert bimodules over C*-algebras, better known as C*-correspondences.
The key element is the existence of a C*-algebra $A$ acting ``externally'' on $X$ and of an $A$-valued inner product.
Rieffel \cite{Rie74} originally envisioned C*-correspondences as a tool to identify C*-algebras in terms of their representation theory.
Pimsner \cite{Pim97} much later extended the theory to accommodate a range of examples of Operator Algebras arising from C*-dynamics and graphs.
The Pimsner algebras generalize the well known Toeplitz- and Cuntz-algebras and they have been under considerable study since their introduction.
By now they form a topic in its own respect with several interactions with graph theory and ring theory.
The C*-correspondence machinery is now viewed as an effective way for quantizing geometric structures that evolve in discrete time.

Nevertheless, the interplay of C*-algebras with Quantum Statistical Mechanics goes well beyond that point.
Taking motivation from ideal gases, there is an analogue of a Kubo-Martin-Schwinger condition for states of C*-algebras that admit an $\bR$-action, even when moving beyond the trace class operators.
See for example the seminal monographs of Bratelli-Robinson \cite{BraRob87, BraRob97}.
The parametrization of equilibrium states has been an essential task in the past 30 years, as they can serve as an invariant for $\bT$-equivariant isomorphisms.
To give only but a fragment of a very long list we mention the Cuntz-algebra \cite{Eva80, OlePed78}, C*-algebras of different types of dynamical systems \cite{BEK80, Exe03, ExeLac03, IKW07, Kak14b, KumRen06, LRR11, LRRW13, PWY00, Tho11, Tho12}, graph C*-algebras \cite{HLRS13, HLRS15, KajWat13, RRS15}, C*-algebras related to number systems \cite{BosCon95, CDL12, LacNes11, LacRae10} and to subshifts \cite{EFW84, MWY98}, and Pimsner algebras \cite{Kaj09, LacNes04}.

The major steps for classifying the equilibrium states of Pismner algebras were established in the seminal paper of Laca and Neshveyev \cite{LacNes04}.
Their arguments were further refined by Laca and Raeburn \cite{LacRae10} in their study of C*-algebras arising from number systems.
The approach of Laca-Raeburn has been very influential, and effectively applicable in a big variety of examples, e.g. \cite{HLRS13, HLRS15, Kak14b, LRR11, LRRW13}.
However in each occasion ad-hoc data is used to trigger the algorithm.
The aim of this paper is to show how these ideas combine with the notion of entropy of Pinzari, Watatani and Yotetani \cite{PWY00} that is induced when the ambient C*-correspon\-den\-ces have finite rank; an assumption that holds in the aforementioned cases.
The KMS-structure of the Pimsner algebras in \cite{HLRS13, HLRS15, KajWat13, Kak14b, LRR11, LRRW13} follows as an application of this analysis.

\subsection{Motivation}

The \emph{Toeplitz-Pimsner algebra} $\T_X$ is the C*-algebra generated by the left creation operators of $X$ and $A$ acting on the Fock space $\F X$.
In addition, there is a range of Pimsner algebras that encodes desirable redundancies.
Every quotient of $\T_X$ by $\bT$-equivariant relations gives rise to a \emph{$J$-relative Cuntz-Pimsner algebra} $\O(J,X)$, where $J \subseteq \phi_X^{-1}(\K X)$ for the left action $\phi_X$ of $A$ and the compact operators $\K X$.
Among those the \emph{Cuntz-Pimsner algebra} $\O_X$ is of central importance and arises when $J$ is Katsura's ideal \cite{Kat04}.
It is the smallest $\bT$-equivariant quotient of $\T_X$ that admits a faithful copy of $A$ and $X$ \cite{Kak14a}.
In general $\O_X \neq \O(\phi^{-1}_X(\K X), X)$ but they coincide with $\O(A, X)$ when $X$ is injective and $\phi_X(A) \subseteq \K X$.

Laca and Neshveyev \cite{LacNes04} studied actions implemented by one-parameter unitaries for injective C*-correspondences.
Their main tool was the use of induced traces from \cite{ComZet83, CunPed79, Ped69}.
In this way they were able to classify the equilibrium states of $\T_X$ in terms of their restrictions on the diagonal by using iterations of the inducing map at each level of the Fock space.
Following Exel-Laca \cite{ExeLac03}, they proved a Wold decomposition into a finite part (given by a series of iterations of a tracial state on the diagonal) and an infinite part (where iterations are stable).
They showed that $\T_X$ admits a rich KMS-structure from which they could derive that of $\O_X$ (when $X$ is injective) through the infinite part.
A characterization was also given for ground states.

Later Laca-Raeburn \cite{LacRae10} refined the main tools of \cite{LacNes04} for a specific class of Pimsner algebras coming from number systems.
From then on the interest was restricted to dynamics implemented by the rotational action.
Most notably they use the statistical approximations of \cite{LacNes04} to parameterize the finite part by tracial states on $A$.
As we shall explain later there is a difference between the parametrizations in \cite{LacNes04} and in \cite{LacRae10}.
Likewise, weak*-homeomorphic parametrizations were given for both ground states and KMS${}_\infty$-states in \cite{LacRae10}.

In turn, a number of subsequent works, e.g. \cite{HLRS13, HLRS15, Kak14b, LRR11, LRRW13}, were greatly influenced by the parametri\-zations of \cite{LacNes04, LacRae10} and applied their method to other examples of Pimsner algebras.
A re-appearing theme is the existence of two critical temperatures $\be_c \geq \be_c'$ for which:

\vspace{2.5pt}
\textbf{(a)} for $\be > \be_c$ the algorithm of \cite{LacRae10} gives all equilibrium states for $\T_X$;

\vspace{1.5pt}
\textbf{(b)} for $\be = \be_c'$ there is an association with averaging states; and

\vspace{1.5pt}
\textbf{(c)} there are no equilibrium states below $\be_c'$.

\vspace{2.5pt}
\noindent
At the other extreme $\O(A, X)$ is not amenable to the construction of (a) but it provides the states for (b).
Such an example is the averaging state on the Cuntz-algebra $\O_d$ which is the only possible equilibrium state (and it appears at $\be = \log d$).

The critical temperatures often coincide and can be associated to structural data of the original construct. 
For example, an Huef-Laca-Raeburn-Sims \cite{HLRS13} show that $\be_c = \be_c'$ is the logarithm of the Perron-Fr\"{o}benius eigenvalue when the graph is irreducible.
In a continuation \cite{HLRS15} the authors also show that a more rich structure appears for general graphs.
That was also verified by Kajiwara-Watatani \cite{KajWat13} who studied the KMS-structure of Cuntz-Krieger C*-algebras.
In the process they achieve also a parametrization of the infinite part of $\O(J, X)$ for $J$ inside Katsura's ideal under some assumptions on the C*-correspondence.
However this does not cover the infinite part in the non-injective case, i.e., it does not cover the case $J = \phi_X^{-1}(\K X)$.
These works motivate the following question:

\vspace{-2pt}
\begin{quoting}
\noindent
\textbf{Q.} How $A$ and $X$ dictate the critical temperature(s) beyond which we don't have equilibrium states of Pimsner algebras?
\end{quoting}

\vspace{-2pt}
\noindent
In the current paper we show how this is done under the assumption that $X$ attains a finite set $\{x_1, \dots, x_d\}$ of vectors in its unit ball such that $1_X = \sum_{i \in [d]} \theta_{x_i, x_i}$.
Equivalently, when the adjointable operators of $X$ are compact.
This is satisfied in the aforementioned examples, and sometimes on the much stronger side of the vectors being orthogonal.
We are not assuming orthogonality here.

Also, we mention that we consider just the dynamics coming from the rotational $\bT$-action for which there is a physical interpretation. 
Recall that the starting point for Gibbs states is the action implemented by $r \mapsto e^{ir(H - \ka N)}$, where $H$ is the \emph{Hamiltonian}, $N$ is the \emph{number operator} and $\ka$ is the \emph{chemical potential}.
When $H$ is the Hamiltonian of a Quantum Harmonic Oscillator then it admits the solution $H = h\om(1/2 + N)$ for the energy dimension $h\om/2$ of the ground state, and the action is implemented by $r \mapsto e^{ir h \om/2} e^{ir(h\om - \ka)N}$.
Since $N$ is unbounded some effort is required to make the action precise.
This can be seen for example in Proposition \ref{P:real} where it is shown that $r \mapsto \ga_{e^{irs}}$ realizes any action implemented by $r \mapsto e^{ir(c + s N)}$ for $c \in \bC$ and $s \in \bR$.
In what follows we make the normalization $h\om - \ka = 1$.
Recall that $\ka < h\om/2$ for any Quantum Harmonic Oscillator and thus substituting $\be$ by $(h \om -\ka) \be$ covers all cases.

\subsection{Decomposition and parametrization}

We write $\Eq_\be(\O(J,X))$ for the $(\si,\be)$-KMS states of the $J$-relative Cuntz-Pimsner algebra $\O(J,X)$ with respect to the action $r \mapsto \si_r := \ga_{e^{ir}}$.
Every $\O(J,X)$ is the quotient of $\T_X$ by a $\bT$-equivariant ideal and hence in order to understand $\Eq_\be(\O(J,X))$ it suffices to do so for $\Eq_\be(\T_X)$.
We need to revisit in detail the main points of \cite{KajWat13, LacNes04} and in particular see how the method of \cite{LacRae10} extends to unify \cite{HLRS13, Kak14b, LRR11, LRRW13}.

In what follows fix $\{x_1, \dots, x_d\}$ be a finite unit decomposition.
Then $\{x_\mu \mid |\mu| = n\}$ yields a unit decomposition for $X^{\otimes n}$, where we write $x_{\mu_n \cdots \mu_1} = x_{\mu_n} \otimes \cdots \otimes x_{\mu_1}$ for a word $\mu = \mu_n \cdots \mu_1$ on the $d$ symbols.
Consequently the projections $p_n \colon \F X \to X^{\otimes n}$ and the compacts $\K(\F X)$ are in $\T_X$.
The finite and the infinite parts of the Wold decomposition at $\be >0$ of \cite{LacNes04} form respectively the convex sets:
\begin{equation}
\Eq_\be^{\fty}(\T_X) := \{\vphi \in \Eq_\be(\T_X) \mid \sum_{k=0}^\infty \vphi(p_k) = 1\}
\qand
\Eq_\be^\infty(\T_X) := \{\vphi \in \Eq_\be(\T_X) \mid \vphi(p_0) = 0\}.
\end{equation}
In particular $\Eq_\be^\infty(\T_X)$ corresponds to the states annihilating $\K(\F X)$ (and thus to those that factor through $\O(A,X)$), and $\Eq_\be^{\fty}(\T_X)$ corresponds to those that restrict to states on $\K(\F X)$ (see Theorem \ref{T:con}).
We then construct the parametrization of each convex set by a specific convex set in the tracial states $\Tr(A)$ of $A$.
This is linked to the formal series
\begin{equation}
c_{\tau,\be} := \sum_{k=0} e^{-k\be} \sum_{|\mu| = k} \tau(\sca{x_\mu, x_\mu})
\FOR \tau \in \Tr(A)
\AND \be > 0.
\end{equation}
We thus need to consider the sets that arise from two extreme cases:
\begin{equation}
\Tr_\be(A) := \{\tau \in \Tr(A) \mid c_{\tau,\be} < \infty\}
\AND
\Avt_\be(A) := \{\tau \in \Tr(A) \mid e^\be \tau(\cdot) = \sum_{i \in [d]} \tau(\sca{x_i, \cdot x_i})\}.
\end{equation}
Notice that $c_{\tau,\be} = \sum_{k=0}^\infty 1$ for every $\tau \in \Avt_\be(A)$.

The parametrization of $\Eq_\be^{\fty}(\T_X)$ is constructive and follows from \cite{LacNes04, LacRae10}.
In Theorem \ref{T:para} we show that there is a bijection
\begin{equation}
\Phi \colon \Tr_\be(A) \to \Eq_\be^{\fty}(\T_X) \text{ such that } \Phi(\tau)(p_0) = c_{\tau,\be}^{-1}.
\end{equation}
In particular $\Phi$ can be reconstructed by
\begin{equation}\label{eq:stat app}
\Phi(\tau) (t(\xi^{\otimes n}) t(\xi^{\otimes m})^*)
=
\de_{n,m} c_{\tau,\be} \sum_{k=0}^\infty e^{-(k+n) \be} 
\sum_{|\mu| = k} \tau(\sca{\eta^{\otimes m} \otimes x_\mu, \xi^{\otimes n} \otimes x_\mu})
\end{equation}
for all $\xi^{\otimes n} \in X^{\otimes n}$ and $\eta^{\otimes m} \in X^{\otimes m}$.
When $\Eq_\be^{\fty}(\T_X)$ is weak*-closed then $\Phi$ is a weak*-homeomorphism.
As a new outcome of this analysis we derive that the map $\Phi$ preserves convex combinations (by weighting over the $c_{\tau,\be}$), and thus it preserves the extreme points.

Theorem \ref{T:para} uses the crux of the arguments of \cite[proof of Theorem 2.1]{LacNes04} but as with \cite{HLRS13, HLRS15, Kak14b, LacRae10, LRR11, LRRW13} there are slight differences.
First of all the correspondence between $\Eq_\be(\T_X)$ and a subset of $\Tr(A)$ is provided in \cite[Theorem 2.1]{LacNes04}, yet as a correspondence $\Eq_\be(\T_X) \to \Tr(A)$ given by restriction $\Phi \mapsto \Phi|_A$, and it is not linked to $\Tr_\be(A)$.
In the comments preceding \cite[Definition 2.3]{LacNes04} it is hinted how a $\tau$ might be obtained from $\Phi$ but the suggested map requires normalization (by the possibly non-constant $c_{\tau, \be}$).
The constructive approach we take here tackles this point.
Secondly, $\Phi$ is obtained through induced representations of Toeplitz-Pimsner algebras rather than the theory of induced traces from \cite{ComZet83, CunPed79, Ped69}.

The infinite part is dealt with in Theorem \ref{T:para 0} where an affine weak*-homeomor\-phism  is constructed:
\begin{equation}
\Psi \colon \{\tau \in \Avt_\be(A) \mid \tau|_I = 0\} \to \Eq_\be^\infty(\T_X) \text{ such that } \Psi(\vphi)|_{A} = \tau,
\end{equation}
for the ideal of $A$
\begin{equation}
I := \{a \in A \mid \lim_n \nor{\phi_X(a) \otimes \id_{X^{\otimes n-1}}} = 0\}.
\end{equation}
The ideal $I$ is the kernel of the canonical quotient $q \colon \T_X \to \O(A,X)$ and arises from the fact that every $\vphi \in \Eq_\be^\infty(\T_X)$ factors through $q$.
The proof follows the lines of \cite[Theorem 3.18]{KajWat13} with the additional use of $I$.
The main tool is that the fixed point algebra is the inductive limit of the $\K X^{\otimes n}$ when $X$ is injective.
It has been implicitly applied in \cite{MWY98, Kak14b} to obtain equilibrium states at the critical temperature.

The affine weak*-homeomorphism has been obtained in \cite[Theorem 2.1 and Theorem 2.5]{LacNes04}, when $X$ is injective and non-degenerate, but with an entirely different line of attack.
At the end of \cite[proof of Theorem 2.1]{LacNes04} it is shown that any equilibrium state can be given as a limit of finite type states on $\si^\eps$ perturbed actions so that $\lim_{\eps \to 0} \si^\eps = \si$.
Hence they verify that \cite[Formula (2.2)]{LacNes04} gives a well defined extension of a state from $A$ to $\T_X$.
Then \cite[Theorem 2.5]{LacNes04} asserts that \cite[Formula (2.2)]{LacNes04} gives a state $\vphi$ of infinite type if and only if $\vphi|_A \in \Avt_\be(A)$.
Theorem \ref{T:para 0} on the other hand constructs directly the extension within the same action $\si$ without any conditions on $X$.
Moreover this method applies to parametrize the gauge-invariant tracial states on $\O(A,X)$.

By passing to a $\bT$-equivariant quotient we derive a similar characterization for any $J$-relative Cuntz-Pimsner algebra $\O(J, X)$ through the following scheme:
\[
\xymatrix@R=.05pt{
\Tr_\be(A) \cap \{\tau \in \Tr(A) \mid \tau|_J = 0\} \ar[r]^{\phantom{wwwwwww} \Phi} & \Eq_\be^{\fty}(\O(J,X)) \\
& \hspace{4cm} \oplus_{\textup{-convex}} \; = \; \Eq_\be(\O(J,X)). \\
\Avt_\be(A) \cap \{\tau \in \Tr(A) \mid \tau|_I = 0\} \ar[r]^{\phantom{wwwwwww} \Psi} & \Eq_\be^\infty(\O(J,X))
}
\]
\begin{center}
{\footnotesize Figure. Parametrization of equilibrium states of $\O(J,X)$.}
\end{center}
\vspace{6pt}

\noindent
Of course this has to be taken with care as it may be that $\Eq_\be^\infty(\O(J,X))$ or $\Eq_\be^{\fty}(\O(J,X))$ is empty for some choices of $\be$ and $J$.
This brings us to the main point of the discussion that captured our interest in the first place.

\subsection{Entropy}

Taking motivation from the classical case, entropy has been used in various guises.
See the excellent monograph of Neshveyev-St\o rmer in this respect \cite{NesSto06}.
Our approach is closer to that of Pinzari-Watatani-Yotetani \cite{PWY00} who considered imprimitivity bimodules with finite left and right unit decompositions.
The starting point is that the statistical approximation (\ref{eq:stat app}) works only when $c_{\tau,\be} < \infty$.
The ratio test may not be conclusive for all formal series $c_{\tau,\be}$ but it can be used to define the following notions of entropies\footnote{\ As we are concerned about convergence of series we will make the convention that $\limsup_k k^{-1} \log a_k = 0$ if $a_k = 0$ eventually.}.
The \emph{entropy of a tracial state $\tau$ of $A$} is given by
\begin{equation}
h_X^\tau := \limsup_k \frac{1}{k} \log \sum_{|\mu| = k} \tau(\sca{x_\mu, x_\mu}).
\end{equation}
Notice that $h_X^\tau \leq \be$ if $\tau \in \Tr_\be(A)$, and that $h_X^\tau = \be$ if $\tau \in \Avt_\be(A)$.
Moreover $h_X^\tau$ is independent of the choice of the unit decomposition.
On the other hand for a fixed unit decomposition $x = \{x_1, \dots, x_d\}$ we can define
\begin{equation}
h_X^x := \limsup_k \frac{1}{k} \log \nor{\sum_{|\mu| = k} \sca{x_\mu, x_\mu}}_{A}
\end{equation}
where the $\limsup$ is actually a limit.
The \emph{strong entropy of $X$} is then given by
\begin{equation}
h_X^s := \inf\{ h_X^x \mid x = \{x_1, \dots, x_d\} \text{ is a unit decomposition for $X$} \}.
\end{equation}
If $A$ is abelian then $h_X^s$ is the same for all unit decompositions.
We define the \emph{entropy of $X$} as the critical temperature below which we do not attain equilibrium states for any Pimsner algebra, i.e.,
\begin{equation}
h_X := \inf\{ \be > 0 \mid \Eq_\be(\T_X) \neq \mt\} \quad (\text{with } \inf \mt := \infty).
\end{equation}
By weak*-compactness the infimum is actually a minimum. 
In Proposition \ref{P:hx}, Corollary \ref{C:com ent}, Proposition \ref{P:not bigger} and Corollary \ref{C:eq ent} we show that:
\begin{enumerate}[labelindent=20pt,labelwidth=\widthof{\ref{last-item}},leftmargin=!]
\item $h_X^\tau \leq h_X^s  \leq \log d$ for every $\tau \in \Tr(A)$.
\item If $\be > h_X^s$ then $\Tr_\be(A) = \Tr(A)$ and thus $\Eq_\be^\infty(\T_X) = \mt$ and $\Eq_\be(\T_X) = \Eq_\be^{\fty}(\T_X)$.
\end{enumerate}
An essential application of \cite{PWY00} gives also that $\Eq_{h_X^s}^\infty(\T_X) \neq \mt$ when $A$ is abelian and $h_X^s > 0$.
In Corollary \ref{C:eq ent} we provide one of the main conclusions of this analysis; namely, that the entropy of $X$ can be recovered from the state entropies in the following way:
\begin{equation}
h_X = \max \big\{ 0, \inf\{ h_X^\tau \mid \tau \in \Tr(A)\} \big\}.
\end{equation}
In fact, if $h_X >0$ or if $h_\tau \geq 0$ for all $\tau \in \Tr(A)$ then $h_X = \min \{ h_X^\tau \mid \tau \in \Tr(A)\}$.
Consequently, if $h_X^\tau = h_X^s$ for all $\tau \in \Tr(A)$ then $\Eq_\be^\infty(\T_X) = \mt$ whenever $\be > h_X$.
This gives the KMS-states theory of $\O_d$ in a nutshell, and reflects what is known for specific cases in the literature.
In Section \ref{S:ex} we emphasize by examples that:
\begin{enumerate}[resume, labelindent=20pt,labelwidth=\widthof{\ref{last-item}},leftmargin=!]
\item The infimum over all $h_X^x$ is required in the definition of $h_X^s$, as the notion of an orthonormal basis is not well defined for Hilbert modules over non-abelian C*-algebras.
\item If $A$ is abelian and $X$ attains an orthonormal basis then $h_X^\tau = h_X = h_X^s = \log d$ for all $\tau \in \Tr(A)$.
Moreover $\Eq_\be(\T_X) = \Eq_\be^{\fty}(\T_X) \neq \mt$ for all $\be > \log d$, and $\Eq_{\log d}(\T_X) = \Eq_{\log d}^\infty(\T_X) \neq \mt$.
\item There may be both finite and infinite parts for $\T_X$ when $\be \in (h_X, h_X^s)$.
\end{enumerate}
As a second application we show how the entropy theory fully recovers the KMS-structure of Pimsner algebras of irreducible graphs \cite{HLRS13, KajWat13}, and that of Pimsner algebras related to dynamical systems or self-similar actions of \cite{Kak14b, LRR11, LRRW13}.
For these examples we derive item (v) above, where the value $d$ is specified by the intrinsic data of the related C*-correspondence.
We also take a look at the KMS-simplices for reducible graphs that have been identified in \cite{HLRS15}.
The main tool there is to study hereditary closures of connected components and quotients by passing to subgraphs.
As a third application we show how the theory of \cite{HLRS15} is recovered just by using entropies, and let us provide a description here.

Let $G_1, \dots, G_m$ be the irreducible components of $G$ with respect to which $G$ takes up an upper triangular form\footnote{\ We write $G_{ij}$ for the edges from $i$ to $j$.
Although this is the transpose of \cite{HLRS15}, it facilitates some computations in the proofs.}.
Each component comes with its Perron-Fr\"{o}benius eigenvalue $\la_{G_s}$ and it is well known that
\begin{equation}
h_G:= \limsup_k \frac{1}{k} \log (\sum_{i,j \in G} (G^k)_{ij}) = \max\{ \log \la_{G_1}, \dots, \log \la_{G_m}\}
\end{equation}
for the entropy $h_G$ of the graph.
The first step is to identify $h_X$ and $h_X^s$ (Theorem \ref{T:graph entropy}).
We verify that $h_G$ equals the strong entropy of the graph C*-correspondence.
Next we say that an irreducible component $G_s$ is a \emph{sink} if there are no paths emitting from $G_s$ that end at a vertex outside $G_s$.
If there exists a zero sink component then $h_X = 0$; otherwise 
\begin{equation}
h_X
=
\min\{\log \la_{G_s} \mid G_s \textup{ is a non-zero sink irreducible component of } G \}.
\end{equation}
A direct computation shows that the averaging traces correspond exactly to positive eigenvectors of the transpose matrix $G^t$.
In Proposition \ref{P:tr en gr} it is shown en passant that
\begin{equation}
h_X^ \tau 
=
\max \{\log \la_{G_s} \mid \textup{ $G_s$ is communicated by some $v_{r}$ in the support of $\tau$} \},
\end{equation}
and that $\tau \in \Tr_\be(A)$ if and only if $h_X^\tau < \be$.
The classification of the KMS-simplices of \cite{HLRS15} then follows in the following way (Theorem \ref{T:phase G}).
There are phase transitions exactly at the numbers
\begin{equation}
\La:= \{\log \la_{G_s} \mid \textup{ $\la_{G_s} \geq \la_{G_r}$ whenever $G_r$ is communicated by $G_s$}\}.
\end{equation}
The $G_s$ that contribute to $\La$ are called \emph{$\la_{G_s}$-maximal}, and correspond to the minimal components described in \cite{HLRS15}.
For convenience we order $\La$ by
\begin{equation}
h_X = \log \la_1 < \cdots < \log \la_q = h_X^s,
\end{equation}
so that for every $n$ there are $G_{n, 1}, \dots, G_{n, k_n}$ that are $\la_n$-maximal.
Let us set
\begin{equation}
V_n : = \{v \in G \mid \textup{ $v$ communicates with some $G_{n, s}$, $s = 1, \dots, k_n$}\}.
\end{equation}
Since $h_X^\tau > \log \la_n$ if and only if the support of $\tau$ communicates with $G_{j,s}$ for some $j > n$ we get
\begin{equation}
\Tr_\be(A)
=
\{\tau \in \Tr(A) \mid  \tau|_{V_j} = 0 \foral j > n\}
\foral
\be \in (\log \la_n, \log \la_{n+1}].
\end{equation}
In particular every $\Tr_\be(A)$ is weak*-closed.
On the other hand the averaging traces $\Avt_\be(A)$ at $\be = \log \la_{G_{s_0}}$ correspond to the $\ell^1$-normalized eigenvectors of the transpose of
\begin{equation}
H_{s_0}
=
\begin{bmatrix}
G_{s_0} & \ast & \cdots & \ast \\
0 & G_{s_1} & \cdots & \ast \\
\vdots & \vdots & \cdots & \vdots \\
0 & 0 & \cdots & G_{s_q}
\end{bmatrix}
\end{equation}
where $G_{s_1}, \dots, G_{s_q}$ are the components communicated by $G_{s_0}$.
We thus conclude that for every $\be \in (\log \la_n, \la_{n+1}]$ with $\log \la_n, \log \la_{n+1} \in \La$ we have an affine weak*-homeomorphism
\[
\Phi \colon \{\tau \in \Tr(A) \mid \tau|_{V_j} = 0 \foral j > n \} \to \Eq_\be^{\fty}(\T_X),
\]
and for every $\be  = \log \la \in \La$ we have an affine weak*-homeomorphism
\[
\Psi \colon \{ \tau \in \Tr(A) \mid H_{s}^t \tau = \la \tau \textup{ for some $\la$-maximal $G_s$}\} \to \Eq_\be^{\infty}(\T_X).
\]
From this we can easily read the structure for the graph C*-algebra $\O_X$.
The map $\Psi$ descends as is since $A \hookrightarrow \O_X$ while for $\Phi$ we restrict to the $\tau \in \Tr_\be(A)$ that have support entirely on sources.
We provide a variety of examples for which we compute the above ad-hoc in order to highlight the methods of the proofs.
In Section \ref{S:ex hlrs} we square our results with \cite[Examples 6.1--6.7]{HLRS15} by showing how the KMS-simplices can recovered by using entropies.

\subsection{States at the upper half plane}

We follow \cite{LacRae10} and make a distinction between states that are bounded on the upper half plane (ground states) and states that arise at the limit of $\be \uparrow \infty$ (KMS${}_\infty$-states).
The parametrization in Theorem \ref{T:infty rel} resembles that of \cite{HLRS13, Kak14b, LRR11, LRRW13}, which in turn are inspired by \cite[Theorem 2.2]{LacNes04}.
Namely, the mapping $\tau \mapsto \vphi_\tau$ given by
\begin{equation}
\vphi_\tau(f) := 
\begin{cases}
\tau(f) & \text{ if } f \in q_J(A) \subseteq \O(J,X),\\
0 & \text{ otherwise},
\end{cases}
\end{equation}
defines an affine weak*-homeomorphism from the states $\S(A)$ of $A$ (resp. from $\Tr(A)$) that vanish on $J$, onto the ground states of $\O(J,X)$ (resp. the KMS${}_\infty$-states of $\O(J,X)$).

\section{Preliminaries}

\subsection{Kubo-Martin-Schwinger states}

Let $\si \colon \bR \to \Aut(\A)$ be an action on a C*-algebra $\A$.
Then there exists a norm-dense $\si$-invariant $*$-subalgebra $\A_{\textup{an}}$ of $\A$ such that for every $f \in \A_{\textup{an}}$ the function $\bR \ni r \mapsto \si_r(f) \in \A$ is analytically continued to an entire function $\bC \ni z \mapsto \si_z(f) \in \A$ \cite[Proposition 2.5.22]{BraRob87}.
If $\be > 0$, then a state $\vphi$ of $\A$ is called a \emph{$(\si,\be)$-KMS state} (or \emph{equilibrium state at $\be$}) if it satisfies the KMS-condition:
\begin{equation}
\vphi(f g) = \vphi(g \si_{i\be}(f)) \, \text{ for all $f, g$ in a norm-dense $\si$-invariant $*$-subalgebra of $\A_{\text{an}}$}.
\end{equation}
If $\be=0$ or if the action is trivial then a KMS-state is a tracial state on $\A$.
The KMS-condition follows as an equivalent for the existence of particular continuous functions \cite[Proposition 5.3.7]{BraRob97}. More precisely, a state $\vphi$ is an equilibrium state at $\be >0$ if and only if for any pair $f, g \in \A$ there exists a complex function $F_{f, g}$ that is analytic on $D = \{ z \in \bC \mid 0 < \im(z) < \be\}$ and continuous (hence bounded) on $\ol{D}$ such that
\[
F_{f, g}(r) = \vphi(f \si_r(g)) \text{ and } F_{f, g}(r + i \be) = \vphi(\si_r(g) f) \foral t \in \bR.
\]
A state $\vphi$ of $\A$ is called a \emph{KMS$_\infty$-state} if it is the weak*-limit of $(\si,\be)$-KMS states as $\be \uparrow \infty$.
A state $\vphi$ of a C*-algebra $\A$ is called a \emph{ground state} if the function $z \mapsto \vphi(f \si_{z}(g))$ is bounded on $\{z \in \bC \mid \text{Im}z >0\}$ for all $f, g$ inside a dense analytic subset of $\A$.
The distinction between ground states and KMS${}_\infty$-states is not apparent in \cite{BraRob97} and is coined in \cite{LacRae10}.

\subsection{C*-correspondences}

The reader should be familiar with the theory of C*-correspondences, e.g. \cite{Kat04}.
A C*-correspondence $X$ over $A$ is a right Hilbert $A$-module with a left action given by a $*$-homomorphism $\phi_X \colon A \to \L X$.
We write $\K X$ for the ideal of compact operators and we denote the rank one compacts by
\[
\theta_{\xi, \eta} \colon X \to X : \ze \mapsto \xi \sca{\eta, \ze}.
\]
For $n >1$ we write $X^{\otimes n} = X^{\otimes n-1} \otimes X$ for the stabilized $n$-tensor product, with the left action given by $\phi_n = \phi_X \otimes \id_{X^{\otimes n-1}}$.
We write $\xi^{\otimes n} := \xi_1 \otimes \cdots \otimes \xi_n$ for the elementary tensors of $X^{\otimes n}$.

We fix $(\pi,t)$ be the Fock representation of $X$.
That is, on $\F X : = \sumoplus^n X^{\otimes n}$ we define the adjointable operators given on the elementary tensors $\eta^{\otimes n} \in X^{\otimes n}$ by
\[
\pi(a) \eta^{\otimes n} = \phi_n(a) \eta^{\otimes n} 
\FOR a  \in A 
\qand t(\xi) \eta^{\otimes n} = \xi \otimes \eta^{\otimes n}
\FOR \xi \in X.
\]
In order to reduce the use of superscripts we will abuse notation and write $t(\xi^{\otimes n})$ instead of the more appropriate $t^n(\xi^{\otimes n})$, and $t(\xi^{\otimes 0}) = \pi(a)$ for $a = \xi^{\otimes 0} \in A$.
We write $\T_X$ for the \emph{Toeplitz-Pimsner} C*-algebra that is generated by $\pi(A)$ and $t(X)$.
It follows that
\[
\T_X = \ol{\spn} \{t(\xi^{\otimes n}) t(\eta^{\otimes m})^* \mid \xi^{\otimes n} \in X^{\otimes n}, \eta^{\otimes m} \in X^{\otimes m}, n, m \in \bZ_+\}
\]
with the understanding that $X^{\otimes 0} = A$.
It is clear that $\T_X$ admits a gauge action $\ga_z : = \ad_{u_z}$ given by the unitaries
\[
u_z (\xi_n) = z^n \xi_n \foral \xi_n \in X^{\otimes n}.
\]
The Gauge-Invariant-Uniqueness-Theorem (in the full generality obtained by Katsura \cite{Kat04}) asserts that $\T_X$ is the universal C*-algebra with respect to pairs $(\rho, v)$ such that
\[
v(\xi)^* v(\eta) = \rho(\sca{\xi, \eta}) \qand \rho(a) t(\xi) = t(\phi_X(a) \xi).
\]
Any such pair induces a map $\psi_v$ on $\K X$ such that $\psi_v(\theta_{\xi, \eta}) = v(\xi) v(\eta)^*$.
In fact $(\rho, v)$ induces a faithful representation of $\T_X$ if and only if it admits a gauge action and $\rho(A) \cap \psi_v(\K X) = (0)$ (hence $\rho$ is injective).
We also fix the projections
\[
p_n \colon \F X \to X^{\otimes n}.
\]
It is straightforward that the $p_n$ commute with the diagonal operators of $\L(\F X)$ and thus with the elements in the fixed point algebra $\T_X^\ga$.

Let $J \subseteq \phi_X^{-1}(\K X)$.
The \emph{$J$-relative Cuntz-Pimsner algebra $\O(J, X)$} is defined as the quotient of $\T_X$ by the ideal generated by
\[
\pi(a) - \psi_t(\phi_X(a)) \foral a \in J.
\]
As such it inherits the gauge action from $\T_X$.
In particular $\O(J,X)$ is the universal C*-algebra with respect to pairs $(\rho, v)$ that in addition satisfy the \emph{$J$-covariance} $\rho(a) = \psi_v(\phi_X(a))$ for all $a \in J$.
If $J = J_X$ for \emph{Katsura's ideal}
\[
J_X :=\ker\phi_X^\perp \bigcap \phi_X^{-1}(\K X)
\]
then the quotient is the \emph{Cuntz-Pimsner} algebra $\O_X$ \cite{Kat04}.
It is shown in \cite{Kak14a} that $A$ embeds in $\O(J, X)$ if and only if $J \subseteq J_X$.
In this case the Gauge-Invariant-Uniqueness-Theorem asserts that a pair $(\rho, v)$ defines a faithful representation of $\O(J, X)$ if and only if it is $J$-covariant, it admits a gauge action, $\rho$ is injective and $J = \{a \in A \mid \rho(a) \in \psi_v(\K X)\}$.

The Fock space itself admits Hilbert spaces quantizations.
For convenience we take Hilbert spaces to be conjugate linear in the first entry (so that they are right Hilbert $\bC$-modules).
Suppose that $\rho_0 \colon A \to \B(H_0)$ is a $*$-representation and form the Hilbert module $\F X \otimes_{\rho_0} H_0$.
It is a Hilbert space with the inner product be given by
\[
\sca{\xi^{\otimes n} \otimes x, \eta^{\otimes m} \otimes y} := \sca{x, \rho_0(\sca{\xi^{\otimes n}, \eta^{\otimes m}}_{\F X}) y}_{H_0}
\]
and the induced pair $(\rho, v) := (\pi \otimes I_{H_0}, t \otimes I_{H_0})$ defines a representation of $\T_X$.
Now if we consider $(H_\tau, x_\tau, \rho_{\tau})$ be the GNS-representation of $A$ and $(H_u, \rho_u)$ be the universal representation of $A$ then $(\pi \otimes I_{H_u}, t \otimes I_{H_u})$ defines a faithful representation of $\T_X$ on
\[
\F X \otimes_{\rho_u} H_u \simeq \sumoplus_{\tau \in \S(A)} \F X \otimes_{\rho_\tau} H_\tau.
\]

\subsection{The KMS-simplex and the number operator}

Fix $s \in \bR$.
We use the gauge action to define $\si \colon \bR \to \Aut(\T_X)$ by $\si_r = \ga_{e^{ir s}}$.
It is standard to see then that it extends to an entire function on the analytic elements $f = t(\xi^{\otimes n}) t(\eta^{\otimes m})^*$ of $\T_X$ by setting
\[
\si_z(t(\xi^{\otimes n}) t(\eta^{\otimes m})^*) = e^{(n-m)i z s} t(\xi^{\otimes n}) t(\eta^{\otimes m})^*.
\]
We emphasize here that we consider just elementary tensors.
The $(\si, \be)$-KMS condition for a state $\vphi$ is thus written as
\begin{equation}\label{eq:kms 1}
\vphi(t(\xi^{\otimes n}) t(\eta^{\otimes m})^* \cdot t(\ze^{\otimes k}) t(y^{\otimes l})^*)
=
e^{-(n-m)\be s} \vphi(t(\ze^{\otimes k}) t(y^{\otimes l})^* \cdot t(\xi^{\otimes n}) t(\eta^{\otimes m})^*)
\end{equation}
Likewise we get the $(\si, \be)$-KMS condition for the relative Cuntz-Pimsner algebras $\O(J, X)$.

\begin{definition}
Let $X$ be a C*-correspondence and $J \subseteq \phi^{-1}_X(\K X)$.
For $\be > 0$ we write $\Eq_\be(\O(J,X))$ for the set of the $(\si,\be)$-KMS states of $\O(J,X)$ where $\si \colon \bR \to \Aut(\O(J,X))$ is given by $r \mapsto \ga_{e^{ir}}$ for the gauge action $\ga$ of $\O(J,X)$.
\end{definition}

The rotational action formalizes the distribution $e^{- \be N}$ for the \emph{number operator} $N$ given by $N \xi^{\otimes n} = n \xi^{\otimes n}$.
This is similar to what is done in Quantum Mechanics and let us include some details here.

\begin{proposition}\label{P:real}
Let $X$ be a C*-correspondence over $A$ and let $c \in \bC$ and $s \in \bR$.
If $N$ is the number operator on $\F X$ then $e^{i(c + sN)} = e^{ic} u_{e^{is}}$.
Consequently the action $\si \colon \bR \to \Aut(\T_X)$ with
\[
\si_r(f) := f \mapsto e^{ir(c + s N)} f e^{-ir(c + s N)} \foral f \in \T_X
\]
is realized by the rotational action $\bR \ni r \mapsto \ga_{e^{irs}} \in \Aut(\T_X)$.
\end{proposition}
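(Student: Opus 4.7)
The plan is to verify the identity $e^{i(c+sN)} = e^{ic} u_{e^{is}}$ directly via functional calculus on the Fock space and then substitute it into the conjugation formula defining $\si_r$, using that scalars commute through $f$ and cancel.

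First I would make sense of the unbounded self-adjoint operator $N$ on $\F X$: by definition $N$ is diagonal with respect to the orthogonal decomposition $\F X = \sumoplus_n X^{\otimes n}$, acting as $n \cdot \id_{X^{\otimes n}}$ on the $n$-th summand. Its spectral resolution is $N = \sum_{n \geq 0} n \, p_n$ in the strong sense, with $p_n$ the projection onto $X^{\otimes n}$. For $s \in \bR$, the operator $sN$ is self-adjoint, so by functional calculus $e^{isN}$ is a bounded (indeed unitary) operator acting on $X^{\otimes n}$ as multiplication by the scalar $e^{isn}$; this is exactly the action of $u_{e^{is}}$ on $X^{\otimes n}$, so $e^{isN} = u_{e^{is}}$.

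Next, since $c \in \bC$ is a scalar, $cI$ commutes with $sN$ on its domain, so the Baker--Campbell--Hausdorff type simplification applies: $e^{i(c + sN)} = e^{ic} e^{isN} = e^{ic} u_{e^{is}}$. This can equivalently be checked coordinatewise on each $X^{\otimes n}$, where the operator acts as scalar multiplication by $e^{i(c + sn)} = e^{ic} \cdot e^{isn}$.

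For the second statement, I would simply substitute into the conjugation formula. Even when $c$ has nonzero imaginary part the factors $e^{irc}$ and $e^{-irc}$ are mutually inverse nonzero scalars, so
\[
\si_r(f) = e^{ir(c + sN)} f \, e^{-ir(c + sN)} = \bigl(e^{irc} u_{e^{irs}}\bigr) f \bigl(e^{-irc} u_{e^{irs}}^*\bigr) = u_{e^{irs}} f u_{e^{irs}}^* = \ga_{e^{irs}}(f)
\]
for every $f \in \T_X$. Since $\ga$ is the gauge action of $\T_X$, this shows that the formal prescription $\si_r = \ad_{e^{ir(c + sN)}}$ descends to a well-defined $*$-automorphism of $\T_X$ coinciding with $r \mapsto \ga_{e^{irs}}$. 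There is no genuine obstacle here: the only delicate point is ensuring that the unboundedness of $N$ and the possibly non-unitary character of $e^{ir(c+sN)}$ when $c \notin \bR$ do not spoil the argument, and this is handled by observing that scalar prefactors commute with everything and cancel in the conjugation.
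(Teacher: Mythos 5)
Your proposal follows the same route as the paper: diagonalize $N$ as $\sum_{n} n\, p_n$ and compute $e^{i(c+sN)}$ fibrewise to identify it with $e^{ic}u_{e^{is}}$, after which the scalar factors cancel in the conjugation. The one point where the paper does more work, and where your write-up is slightly too quick, is the justification of the functional calculus itself: $\F X$ is a Hilbert $A$-module, not a Hilbert space, so the spectral theorem for the unbounded self-adjoint operator $N$ is not directly available there. The paper handles this by tensoring with the GNS representations $(H_\tau,\rho_\tau)$ (equivalently, the universal representation) to obtain genuine Hilbert spaces $\F X\otimes_{\rho_\tau}H_\tau$, where $N\otimes I_{H_\tau}=\sum_k k\,p_{k,\tau}$ is an honest unbounded self-adjoint operator and $e^{i(c+sN)}\otimes I_{H_\tau}$ can be computed as a \textsc{sot}-limit via polynomial approximation of $e^{zp_{k,\tau}}$. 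Your argument becomes fully rigorous once you either insert this reduction or invoke the theory of regular operators on Hilbert modules to make sense of $e^{isN}$ on $\F X$ directly; as written, ``by functional calculus $e^{isN}$ acts as $e^{isn}$ on $X^{\otimes n}$'' is the conclusion the paper is at pains to establish rather than a freely available fact. The remainder of your argument (commuting the scalar $e^{ic}$ through and cancelling $e^{irc}e^{-irc}$, including the non-unitary case $c\notin\bR$) matches the intended reading and is fine.
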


\begin{proof}
Let $\tau$ be a state of $A$ and form the Hilbert space $\F X \otimes_{\rho_\tau} H_\tau$ for the GNS-representa\-tion $(H_\tau, x_\tau, \rho_\tau)$ of $\tau$.
Then $N \otimes I_{H_\tau} = \sum_{k=0}^\infty k p_k \otimes I_{H_\tau}$ is an unbounded selfadjoint operator on $\F X \otimes_{\rho_\tau} H_{\tau}$.
It suffices to show that 
\[
e^{i(c + sN)} \otimes I_{H_\tau} = e^{ic} u_{e^{is}} \otimes I_{H_\tau}.
\]
For convenience let us we write $p_{k,\tau} = p_k \otimes I_{H_\tau}$.
By the Spectral Theorem for unbounded normal operators we deduce that 
\[
e^{i(c + sN)} \otimes I_{H_\tau} = \text{sot-}\lim_m e^{ic} \prod_{k=0}^m e^{i s k p_{k,\tau}}.
\]
For any $z \in \bC$ we can use the functional calculus to approximate $e^{z p_{k,\tau}}$ by $P_\ell(p_{k,\tau})$ such that the $P_\ell(x) = \sum_{j} \al_{\ell, j} x^j$ converge to $e^{z x}$ for $x \in \{0,1\}$.
Then we get
\[
P_\ell(p_{k,\tau}) (\xi^{\otimes n} \otimes x_\tau)
=
\begin{cases}
\sum_j \al_{\ell, j} (\xi^{\otimes n} \otimes x_\tau) & \text{ if } n = k,\\
\al_{\ell, 0} (\xi^{\otimes n} \otimes x_\tau) & \text{ if } n \neq k,
\end{cases}
=
\begin{cases}
P_\ell(1) (\xi^{\otimes n} \otimes x_\tau) & \text{ if } n = k,\\
P_\ell(0) (\xi^{\otimes n} \otimes x_\tau) & \text{ if } n \neq k.
\end{cases}
\]
and so $e^{z p_{k,\tau}} = e^{z} p_{k,\tau} + \sum_{m \neq k} p_{m,\tau}$. 
Therefore
\begin{align*}
e^{i(c + sN)} \otimes I_{H_\tau} (\xi^{\otimes n} \otimes x_\tau)
& =
\lim_m e^{ic} \prod_{k=0}^m e^{i s k p_{k, \tau}} (\xi^{\otimes n} \otimes x_\tau) \\
& =
e^{ic} e^{i s n} (\xi^{\otimes n} \otimes x_\tau) 
 =
e^{ic} (u_{e^{i s}} \otimes I_{H_\tau}) (\xi^{\otimes n} \otimes x_\tau),
\end{align*}
and the proof is complete.
\end{proof}

Henceforth we focus on the case where $s = 1$.
Substituting $\be$ by $s \be$ in what follows yields the results for any $s \in \bR^+$.

\section{Characterization of equilibrium states}

We start by giving an equivalent characterization of the KMS-condition.

\begin{proposition}\label{P:char}
Let $X$ be a C*-correspondence and let $\be \in \bR$.
Then $\vphi \in \Eq_\be(\T_X)$ if and only if
\begin{equation}\label{eq:kms 2}
\vphi(t(\xi^{\otimes n}) t(\eta^{\otimes m})^*)= \de_{n,m} e^{-n\be} \vphi(t(\eta^{\otimes m})^* t(\xi^{\otimes n}))
\end{equation}
for all elementary tensor vectors $\xi^{\otimes n} \in X^{\otimes n}$, $\eta^{\otimes m} \in X^{\otimes m}$, with $n,m \in \bZ_+$.
Consequently two $(\si, \be)$-KMS states coincide if and only if they agree on $\pi(A)$.

An analogous description holds for the states in $\Eq_\be(\O(J,X))$ for any relative Cuntz-Pimsner algebra $\O(J,X)$.
\end{proposition}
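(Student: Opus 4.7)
The plan leverages a point noted in the preliminaries: each monomial $f = t(\xi^{\otimes n}) t(\eta^{\otimes m})^*$ is entire analytic for $\si$ with $\si_z(f) = e^{(n-m)iz} f$, and the linear span of such monomials is a norm-dense, $\si$-invariant $*$-subalgebra of $\T_X$. Hence the KMS identity $\vphi(fg) = \vphi(g\,\si_{i\be}(f))$ need only be checked on products of such monomials, and a state is KMS as soon as its values on these products satisfy the resulting modular relation.

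For the forward direction, I would take $\vphi \in \Eq_\be(\T_X)$ and feed the KMS identity the pair $f = t(\xi^{\otimes n})$, $g = t(\eta^{\otimes m})^*$. Since $\si_{i\be}(t(\xi^{\otimes n})) = e^{-n\be} t(\xi^{\otimes n})$, this yields immediately
\[
\vphi(t(\xi^{\otimes n}) t(\eta^{\otimes m})^*) = e^{-n\be}\, \vphi(t(\eta^{\otimes m})^* t(\xi^{\otimes n})).
\]
A second application with the roles of $f$ and $g$ reversed, using $\si_{i\be}(t(\eta^{\otimes m})^*) = e^{m\be} t(\eta^{\otimes m})^*$, gives $\vphi(t(\eta^{\otimes m})^* t(\xi^{\otimes n})) = e^{m\be}\, \vphi(t(\xi^{\otimes n}) t(\eta^{\otimes m})^*)$. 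Substituting produces the self-consistency equation $\vphi(t(\xi^{\otimes n}) t(\eta^{\otimes m})^*) = e^{(m-n)\be}\, \vphi(t(\xi^{\otimes n}) t(\eta^{\otimes m})^*)$, which forces the Kronecker delta $\de_{n,m}$. Equivalently, a KMS state is gauge-invariant, and the $\bZ$-grading cut out by $\ga$ annihilates all off-diagonal monomials.

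For the reverse direction, assume \eqref{eq:kms 2}. By density and linearity it suffices to verify the KMS identity for $f = t(\xi^{\otimes n}) t(\eta^{\otimes m})^*$ and $g = t(\ze^{\otimes k}) t(y^{\otimes l})^*$; since $\si_{i\be}(f) = e^{-(n-m)\be} f$, this is exactly \eqref{eq:kms 1}. The first observation is that \eqref{eq:kms 2} already forces gauge-invariance of $\vphi$, so both $\vphi(fg)$ and $\vphi(gf)$ vanish unless the total degree $(n-m)+(k-l)$ is zero. In the balanced case I would reduce $fg$ and $gf$ to linear combinations of diagonal monomials $t(\al^{\otimes p}) t(\be^{\otimes p})^*$ via the Hilbert-module relations $t(\al)^* t(\be) = \pi(\sca{\al,\be})$ and $\pi(a) t(\al) = t(\phi_X(a)\al)$, then apply \eqref{eq:kms 2} to each resulting diagonal term. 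Tracking the $A$-valued coefficients produced by the collapses matches the two sides up to the scalar factor $e^{-(n-m)\be}$.

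The stated consequence drops out by putting $n=m$ into \eqref{eq:kms 2} and using $t(\eta^{\otimes n})^* t(\xi^{\otimes n}) = \pi(\sca{\eta^{\otimes n}, \xi^{\otimes n}})$, so that every $\vphi(t(\xi^{\otimes n}) t(\eta^{\otimes n})^*)$ is determined by $\vphi|_{\pi(A)}$, while all off-diagonal monomials vanish. The identical strategy transfers to $\O(J,X)$, since the canonical quotient $\T_X \to \O(J,X)$ is $\si$-equivariant and preserves the analytic structure of the generating monomials. The main obstacle will be the bookkeeping in the reverse direction: one must check that the two different orderings of collapses used to rewrite $fg$ and $gf$ produce the same $A$-valued coefficient on each degree-zero diagonal term. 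This is routine from associativity of the tensor product and right $A$-linearity of the inner product, but needs to be executed carefully.
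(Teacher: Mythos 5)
Your proposal is correct and follows essentially the same route as the paper: the forward direction via a direct application of the KMS identity to $f=t(\xi^{\otimes n})$, $g=t(\eta^{\otimes m})^*$ together with gauge-invariance to kill the off-diagonal terms, and the converse via a case split on the total degree $n-m+k-l$ followed by collapsing both $fg$ and $gf$ to the same diagonal expression $e^{-n\be}\vphi(t(\eta^{\otimes m})^* t(\ze^{\otimes k}) t(y^{\otimes l})^* t(\xi^{\otimes n}))$ using the relations $t(\al)^*t(\be)=\pi(\sca{\al,\be})$. The bookkeeping you flag as the main obstacle is exactly the computation the paper carries out, and it goes through as you anticipate.
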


\begin{proof}
Suppose that $\vphi \in \Eq_\be(\T_X)$.
If $n = m$ then the KMS-condition in (\ref{eq:kms 1}) directly gives that
\[
\vphi(t(\xi^{\otimes n}) t(\eta^{\otimes n})^*) = e^{-n\be} \vphi(t(\eta^{\otimes n})^* t(\xi^{\otimes n})).
\]
If $n \neq m$ then we use that $\vphi$ is $\si$-invariant and therefore for every $r \in \bR$ we get
\[
\vphi(t(\xi^{\otimes n}) t(\eta^{\otimes m})^*)
=
\vphi \si_r (t(\xi^{\otimes n}) t(\eta^{\otimes m})^*)
=
e^{(n-m)ir} \vphi(t(\xi^{\otimes n}) t(\eta^{\otimes m})^*).
\]
As $(n-m) \neq 0$ we must have that $\vphi(t(\xi^{\otimes n}) t(\eta^{\otimes m})^*) = 0$.

Conversely suppose that $\vphi$ is a state on $\T_X$ satisfying (\ref{eq:kms 2}).
It will be convenient to refer to elements of the form $t(\xi^{\otimes n}) t(\eta^{\otimes m})^*$ as \emph{$(n,m)$-products}.
We have to verify equation (\ref{eq:kms 1}), i.e.,
\[
\vphi(t(\xi^{\otimes n}) t(\eta^{\otimes m})^* \cdot t(\ze^{\otimes k}) t(y^{\otimes l})^*)
=
e^{-(n-m)\be} \vphi(t(\ze^{\otimes k}) t(y^{\otimes l})^* \cdot t(\xi^{\otimes n}) t(\eta^{\otimes m})^*).
\]
We will proceed by considering cases on $n, m, k, l$.
The left hand side of (\ref{eq:kms 1}) gives either an $(n, m-k+l)$-product or an $(n +k - m, l)$-product, depending on whether $m \geq k$ or $m \leq k$.
Similarly the right hand side gives either a $(k, l-n+m)$-product or a $(k + n - l, m)$-product.
In each case we get that $\vphi$ is zero on these products, and thus equation (\ref{eq:kms 1}) holds when $n+k \neq l+m$.
Now suppose that $n + k = l + m$.
Without loss of generality we may assume that $m \geq k$ and so $n \geq l$ (otherwise take adjoints).
Let us write
\[
\eta^{\otimes m} = \eta^{\otimes k} \otimes \eta^{\otimes m-k}
\qand
\xi^{\otimes n} = \xi^{\otimes l} \otimes \xi^{\otimes n-l}.
\]
By using (\ref{eq:kms 2}), the left hand side of (\ref{eq:kms 1}) equals to
\begin{align*}
\vphi(t(\xi^{\otimes n}) t(\eta^{\otimes m})^* \cdot t(\ze^{\otimes k}) t(y^{\otimes l})^*)
& =
\vphi(t(\xi^{\otimes n}) t(y^{\otimes l} \otimes \sca{\zeta^{\otimes k}, \eta^{\otimes k}} \eta^{\otimes m-k})^*) \\
& =
e^{-n\be}
\vphi(t(y^{\otimes l} \otimes \sca{\zeta^{\otimes k}, \eta^{\otimes k}} \eta^{\otimes m-k})^* t(\xi^{\otimes n})) \\
& =
e^{-n \be}
\vphi(t(\eta^{\otimes m})^* t(\ze^{\otimes k}) t(y^{\otimes l})^* t(\xi^{\otimes n})).
\end{align*}
Likewise, the right hand side of equation \ref{eq:kms 1} equals to
\begin{align*}
e^{-(n-m)\be} \vphi(t(\ze^{\otimes k}) t(y^{\otimes l})^* \cdot t(\xi^{\otimes n}) t(\eta^{\otimes m})^*)
& =
e^{-(n-m)\be} \vphi(t(\ze^{\otimes k} \otimes \sca{y^{\otimes l}, \xi^{\otimes l}} \xi^{\otimes n-l}) t(\eta^{\otimes m})^*) \\
& = 
e^{-(n-m)\be} e^{-m \be} \vphi( t(\eta^{\otimes m})^* t(\ze^{\otimes k} \otimes \sca{y^{\otimes l}, \xi^{\otimes l}} \xi^{\otimes n-l})) \\
& =
e^{-n \be} \vphi(t(\eta^{\otimes m})^* t(\ze^{\otimes k}) t(y^{\otimes l})^* t(\xi^{\otimes n})).
\end{align*}
Therefore equation \ref{eq:kms 1} is satisfied, and the proof is complete.
\end{proof}

The following proposition allows us to consider just unital C*-correspon\-den\-ces from now on.
When $\phi_X$ is not unital, we define $X^1$ be the space $X$ which becomes a C*-corresponden\-ce over $A^1 = A + \bC$ by extending the operations $\phi_X(1) \xi = \xi = \xi 1$.
Note here that $A^1 = A \oplus \bC$ when $A$ is already unital but $\phi_X(1_A) \neq 1_X$.

\begin{proposition}\label{P:unital}
Let $X$ be a C*-correspondence over $A$.
Then $\vphi$ is a $(\si,\be)$-KMS state for $\T_{X^1}$ if and only if it restricts to a $(\si,\be)$-KMS state on $\T_X$.
\end{proposition}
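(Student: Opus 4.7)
The plan is to apply the elementary-tensor characterisation of Proposition~\ref{P:char} to both algebras. The structural observation to exploit is that $X^1$ coincides with $X$ as a right Hilbert $A$-module; only the coefficient algebra is enlarged to $A^1 = A + \bC \cdot 1$, with $\phi_{X^1}(1) = 1_X$. Hence $(X^1)^{\otimes n} = X^{\otimes n}$ for every $n \geq 1$ and $\F(X^1)$ differs from $\F X$ only at the zeroth level, where $A^1$ replaces $A$. Consequently there is a canonical $\si$-equivariant embedding $\T_X \hookrightarrow \T_{X^1}$ with $\T_{X^1} = \T_X + \bC \cdot 1_{\T_{X^1}}$, because the additional generator $\pi^1(1_{A^1})$ is the unit of $\T_{X^1}$.

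For the forward direction, suppose $\vphi \in \Eq_\be(\T_{X^1})$. Specialising the identity \eqref{eq:kms 2} for $\vphi$ to elementary tensors drawn from $X^{\otimes n}$ with $n \geq 1$, and from $A \subseteq A^1$ at the zeroth level, reproduces \eqref{eq:kms 2} for $\vphi|_{\T_X}$, so the restriction satisfies the KMS equation on $\T_X$. For the reverse direction I apply Proposition~\ref{P:char} to $\T_{X^1}$; the only elementary tensors beyond those already covered by $\vphi|_{\T_X}$ live at the zeroth level and involve $1 \in A^1 \setminus A$. Writing any $a \in A^1$ as $a_0 + \al \cdot 1$ with $a_0 \in A$ and $\al \in \bC$, and using that $\pi^1(1)$ is the identity of $\T_{X^1}$, the relevant products $\pi^1(a)\pi^1(b)^*$ and $t^1(\xi^{\otimes n})\pi^1(a)^*$ expand as linear combinations of elements already lying in $\T_X$. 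Equation \eqref{eq:kms 2} then follows term-by-term from the KMS condition for $\vphi|_{\T_X}$ together with the tracial property of $\vphi|_A$, which extends automatically to $A^1$ because the adjoined unit is central and $\si$-fixed.

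The main bookkeeping will be the case analysis on $(n,m)$ in \eqref{eq:kms 2} paired with the $a = a_0 + \al \cdot 1$ decompositions; I do not foresee a conceptual obstacle, because $X^1$ is essentially $X$ with a formal unit adjoined to the coefficient algebra and the extra generator $\pi^1(1_{A^1})$ is dynamically trivial.
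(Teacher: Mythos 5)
Your route is essentially the paper's: identify $\T_{X^1}$ with the unitization $\T_X + \bC\,\pi(1_{A^1})$ and transport the KMS condition across it, checking the adjoined unit by hand. The one step where the paper does genuine work and you do not is the assertion that there is a canonical embedding $\T_X \hookrightarrow \T_{X^1}$. The subalgebra of $\T_{X^1}$ generated by $\pi(A)$ and $t(X)$ acts on $\F X^1$, not on $\F X$, so one must argue that it is canonically isomorphic to the Toeplitz algebra of $X$; the paper verifies this with the Gauge-Invariant Uniqueness Theorem, compressing by the vacuum projection $p_0$ onto $A^1$ to see that $\pi(A) \cap \psi_t(\K X) = (0)$. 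You should either reproduce that argument or observe that $\F X$ is a reducing submodule of $\F X^1$ on which the pair $(\pi|_A, t)$ restricts to the honest Fock representation of $X$, whence the canonical surjection from $\T_X$ onto $\ca(\pi|_A, t)$ is injective. Without this, ``restricts to a state on $\T_X$'' is not yet meaningful, since you have not identified the subalgebra you are restricting to.

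A second point, which concerns your forward direction (and which, in fairness, the paper's own proof also leaves implicit in the phrase ``unitization of states''): specialising \eqref{eq:kms 2} to tensors over $A$ and $X$ shows only that $\vphi|_{\T_X}$ is a positive functional satisfying the KMS identity; it does not show that it has norm one, i.e.\ that it is a state. This is not automatic: $\T_X$ is a codimension-one ideal of $\T_{X^1}$, and the quotient character $f + \la 1 \mapsto \la$ is a gauge-invariant $(\si,\be)$-KMS state of $\T_{X^1}$ (both sides of the KMS identity vanish as soon as either factor lies in the ideal, and on $\pi(A^1)$ the condition reduces to traciality of a character), yet its restriction to $\T_X$ is the zero functional. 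So the literal ``only if'' requires the additional hypothesis that $\vphi$ tends to $1$ along an approximate unit of $\T_X$, or an explicit exclusion of such states. The direction the paper actually uses later --- unique state extension from $\T_X$ to its unitization, preserving the KMS condition --- is the one your second paragraph handles, and that part of your argument is sound.
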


\begin{proof}
If $\phi_X \colon A \to \L X$ is unital then there is nothing to show.
Otherwise let $(\pi, t)$ be the Fock representation of $X^1$ and notice that $(\pi|_A, t)$ defines a faithful representation of $\T_X$ by the Gauge-Invariant-Uniqueness-Theorem.
Indeed it admits a gauge action and if $\pi(a) \in \psi_t(\K X)$ then
\[
a = p_0 \pi(a) p_0 \in p_0 \psi_t(\K X) p_0 \subseteq p_0\psi_t(\K X^1)p_0 = (0)
\]
for the projection $p_0$ on $A^1 \subset \F X^1$.
Therefore $\T_X \subseteq \T_{X^1}$.
In fact we see that $\T_{X^1}$ is the unitization of $\T_X$.
As the $(\si,\be)$-KMS condition is the same for both $\T_X$ and $\T_{X^1}$ then the equivalence follows by the unitization of states.
Notice here that $\si$ is the same action spatially implemented by the corresponding unitaries.
\end{proof}

\begin{remark}
Henceforth we will assume that the C*-correspondence is unital for our proofs.
However the statements will be given for possibly non-unital C*-correspondences.
\end{remark}

\section{Wold decomposition}

We will consider C*-correspondences that admit a finite decomposition of unit.
By Kasparov's Stabilization Theorem this is equivalent to having $\L X = \K X$.

\begin{definition}
A C*-correspondence $X$ over $A$ will be of \emph{finite rank} if there is a finite collection $x := \{x_1, \dots, x_d\}$ of vectors in the unit ball of $X$ such that $\sum_{i \in [d]} \theta_{x_i, x_i} = 1_{X}$.
\end{definition}

\begin{remark}
For any non-trivial word $\mu = \mu_n \cdots \mu_1 \in \bF_+^d$ we write
\[
x_\mu := x_{\mu_n} \otimes \cdots \otimes x_{\mu_1} \in X^{\otimes n}.
\]
We reserve the notation $x_\mt = 1_A \in X^{\otimes 0}$ when $A$ is unital.
It follows that $X^{\otimes n}$ has finite rank with respect to the collection $\{x_\mu \mid \mu \in \bF_+^d, |\mu| = n\}$.
\end{remark}

\begin{remark}
By construction, $\K(\F X)$ is an ideal in $\T_X$.
When $X$ is of finite rank then we can write every projection $p_k \colon \F X \to A$ with $k \geq 1$ by
\[
p_k = \sum_{|\mu| = k} t(x_\mu) t(x_\mu)^* - \sum_{|\nu| = k+1} t(x_\nu)t(x_\nu)^*,
\]
and thus $p_k \in \T_X$ for all $k \geq 1$.
Moreover we see that
\begin{equation}\label{eq:it id}
p_0 = 1_{\F X} - \sum_{i \in [d]} t(x_i) t(x_i)^* \in \T_{X^1}.
\end{equation}
Hence by using the unitization we have that $p_k \in \T_X^1$ for all $k \in \bZ_+$.
It is straightforward that the $p_n$ commute with all elements in $\T_{X^1}^\ga$, as the latter are supported on the diagonal of $\F X$.
Moreover if $\vphi \in \Eq_\be(\T_{X^1})$ then
\begin{equation}\label{eq:zero}
\vphi(p_k)
=
\sum_{|\mu| = k} \vphi(t(x_\mu) p_0 t(x_\mu)^*) 
 =
\sum_{|\mu| = k} e^{-k \be}\vphi(p_0 \pi(\sca{x_\mu, x_\mu}) p_0) 
 \leq 
\sum_{|\mu| = k} e^{-k \be} \vphi(p_0).
\end{equation}
Therefore if $\vphi(p_0) = 0$ then $\vphi(p_k) = 0$ for all $k \in \bZ_+$.
\end{remark}

This triggers the following definition.
We will be using the same symbol for the extension of a state from $\T_X$ to $\T_{X^{1}}$ from Proposition \ref{P:unital}.

\begin{definition}
Let $X$ be a C*-correspondence of finite rank over $A$.
For $\be > 0$ we define 
\begin{equation}
\Eq_\be^{\fty}(\T_X) := \{\vphi \in \Eq_\be(\T_X) \mid \sum_{k=0}^\infty \vphi(p_k) = 1\}
\AND
\Eq_\be^\infty(\T_X) := \{\vphi \in \Eq_\be(\T_X) \mid \vphi(p_0) = 0\}.
\end{equation}
Likewise we define $\Eq_\be^\infty(\O(J,X))$ and $\Eq_\be^{\fty}(\O(J,X))$ for any $J$-relative Cuntz-Pimsner algebra with respect to the projections $q_J(p_k)$, for the canonical $*$-epimorphism $q_J \colon \T_X \to \O(J,X)$.
\end{definition}

\begin{remark}
We note that $\Eq_\be^\infty(\cdot)$ and $\Eq_\be^{\fty}(\cdot)$ may be trivial in some cases.
For if $q \colon \T_X \to \O(A, X)$ is the canonical $*$-epimorphism, then its kernel $\K(\F X)$ is generated by $p_0$.
This automatically implies that $\Eq_\be^{\fty}(\O(A,X)) = \mt$.
As another example, in Proposition \ref{P:hx} we will show that $\Eq_\be^\infty(\T_X) = \mt$ for sufficiently large $\be$.
\end{remark}

Notice that $\K( \F X) \subseteq \T_X$ and thus it inherits the gauge action by restriction.
Therefore we also get equilibrium states for $\K(\F X)$.
Let us give an alternative proof of \cite[Proposition 2.4]{LacNes04} of the Wold decomposition into a finite and an infinite part.

\begin{theorem}\label{T:con}
Let $X$ be a C*-correspondence of finite rank over $A$ and let $\be > 0$.
Then for any $\vphi \in \Eq_\be(\T_X)$ we have:
\begin{enumerate}[leftmargin=30pt, itemindent=0pt, itemsep=0pt]
\item $\vphi \in \Eq_\be^{\fty}(\T_X)$ if and only if $\vphi|_{\K(\F X)} \in \Eq_\be(\K(\F X))$;
\item $\vphi \in \Eq_\be^\infty(\T_X)$ if and only if $\vphi|_{\K(\F X)} = 0$ if and only if $\vphi$ factors through $\O(A,X)$;
\item There are unique $\vphi_{\fty} \in \Eq_\be^{\fty}(\T_X)$ and $\vphi_\infty \in \Eq_\be^\infty(\T_X)$ such that
\[
\vphi = \la \vphi_{\fty} + (1-\la) \vphi_\infty, \FOR \la := \sum_{k=0}^\infty \vphi(p_k).
\]
\end{enumerate}
\end{theorem}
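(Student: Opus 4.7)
The plan is to leverage the identity (\ref{eq:zero}) together with the fact that $p_{\leq n} := \sum_{k \leq n} p_k$ is a $\si$-invariant approximate unit of $\K(\F X)$, and that $\K(\F X) = \ker q$ for $q \colon \T_X \to \O(A,X)$ the canonical quotient (the latter via $p_0 = \pi(1_A) - \sum_i t(x_i) t(x_i)^*$ in the unital finite rank setting, which we may assume by Proposition \ref{P:unital}). For (ii), equation (\ref{eq:zero}) forces $\vphi(p_k) = 0$ for all $k$ whenever $\vphi(p_0) = 0$, so $\vphi(p_{\leq n}) = 0$; for any $a \in \K(\F X)$ the approximation $a = \lim_n p_{\leq n} a p_{\leq n}$ in norm and the operator inequality $p_{\leq n} a^* a p_{\leq n} \leq \|a\|^2 p_{\leq n}$ combine via Cauchy--Schwarz to give $|\vphi(p_{\leq n} a p_{\leq n})|^2 \leq \|a\|^2 \vphi(p_{\leq n})^2 = 0$, whence $\vphi|_{\K(\F X)} = 0$ and $\vphi$ descends to $\O(A,X)$. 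The converse implications in (ii) are immediate, and (i) follows from the approximate-unit characterisation of the norm of a restricted positive functional, $\|\vphi|_{\K(\F X)}\| = \lim_n \vphi(p_{\leq n}) = \sum_k \vphi(p_k)$, together with the fact that the KMS condition is inherited by $\si$-invariant subalgebras.

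For (iii), set $\la := \sum_k \vphi(p_k) \in [0,1]$. The extremes $\la = 0$ and $\la = 1$ are handled by (ii) and (i) respectively, making the decomposition trivial. For $\la \in (0,1)$, I would use the gauge-invariance $\si_{i\be}(p_{\leq n}) = p_{\leq n}$ and the KMS condition to derive the two identities
\[
\vphi(p_{\leq n} f p_{\leq n}) = \vphi(f p_{\leq n}), \qquad \vphi(f) - \vphi(p_{\leq n} f p_{\leq n}) = \vphi(p_{>n} f p_{>n}) \ (f \geq 0);
\]
the first shows that $n \mapsto \vphi(p_{\leq n} f p_{\leq n})$ is monotone on positive $f$, hence convergent. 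Define
\[
\vphi_{\fty}(f) := \la^{-1} \lim_n \vphi(p_{\leq n} f p_{\leq n}),
\]
which is positive, satisfies $\vphi_{\fty}(1) = 1$, and restricts on $\K(\F X)$ to $\la^{-1}\vphi|_{\K(\F X)}$, a $(\si,\be)$-KMS state of the ideal by (i). The second identity above shows that $\vphi_\infty := (1-\la)^{-1}(\vphi - \la \vphi_{\fty})$ is a positive functional of norm $1$ vanishing on $\K(\F X)$, so $\vphi_\infty \in \Eq_\be^\infty(\T_X)$ by (ii). The decomposition $\vphi = \la \vphi_{\fty} + (1-\la) \vphi_\infty$ is built in by construction; uniqueness is obtained by evaluating a hypothetical decomposition on each $p_k$ to recover $\la$, then invoking the truncation formula on $\K(\F X)$ to pin down $\vphi_{\fty}$ and consequently $\vphi_\infty$.

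The main obstacle is verifying that $\vphi_{\fty}$ is $(\si,\be)$-KMS on all of $\T_X$ rather than only on the ideal $\K(\F X)$ (where it is inherited by restriction). I would realise $\vphi_{\fty}$ as the strict-topology extension of the KMS state $\la^{-1}\vphi|_{\K(\F X)}$ from $\K(\F X)$ to its multiplier algebra $M(\K(\F X)) \supseteq \T_X$; the $\si$-invariance of $\{p_{\leq n}\}$ then transports the KMS identity through the limit defining $\vphi_{\fty}$. Once this is in place, the KMS property of $\vphi_\infty$ is immediate from that of $\vphi$ and $\vphi_{\fty}$ by linearity.
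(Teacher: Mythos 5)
Your decomposition is the same as the paper's: both split $\vphi$ by compressing with the gauge-fixed projections $p_{\le n}$, use (\ref{eq:zero}) to get (ii), use $(p_{\le n})_n$ as an approximate unit of $\K(\F X)$ for (i), and obtain uniqueness in (iii) from the uniqueness of the state extension from the ideal $\K(\F X)$. Your preliminary identities are all correct: since $\si_{i\be}(p_{\le n})=p_{\le n}$, the KMS condition puts $p_{\le n}$ in the centralizer of $\vphi$, which kills the cross terms and yields both $\vphi(p_{\le n}fp_{\le n})=\vphi(fp_{\le n})$ and the positivity of $\vphi-\la\vphi_{\fty}$.

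The one place where you diverge from the paper — and where your write-up is thinnest — is the verification that $\vphi_{\fty}$ satisfies the KMS condition on all of $\T_X$ and not merely on the ideal. The paper does this by hand, using the index-shift identity $t(\eta^{\otimes n})^* p_k t(\xi^{\otimes n}) = p_{k-n}\, t(\eta^{\otimes n})^* t(\xi^{\otimes n})\, p_{k-n}$ (for $k\ge n$, and $0$ otherwise) to show directly that $\psi_{\fty}$ satisfies the criterion of Proposition \ref{P:char}. You instead invoke, in a single clause, the general principle that the strict extension of a $(\si,\be)$-KMS state from an essential ideal to the multiplier algebra is again KMS when the approximate unit is $\si$-fixed. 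That principle is true, but it is not a triviality and is the actual content of this part of the theorem; as stated it is a claim, not a proof. It can be established along your lines: for analytic $f,g$ one writes $\vphi(p_{\le n}fgp_{\le n})=\vphi\big((p_{\le n}f)(gp_{\le n})\big)$, applies the KMS identity of the restriction to these two elements of the ideal, absorbs the fixed projections using $\si_{i\be}(p_{\le n})=p_{\le n}$, and then uses Cauchy--Schwarz against $\tilde\vphi(1-p_{\le n})\to 0$ to show that inserting or deleting copies of $p_{\le n}$ does not change the limits; both sides then converge to $\tilde\psi(\si_{i\be}(f)\si_{i\be}(g))$. If you supply that argument (or simply run the paper's direct computation on the monomials $t(\xi^{\otimes n})t(\eta^{\otimes m})^*$, which is shorter here because Proposition \ref{P:char} reduces everything to $(n,n)$-products), the proof is complete. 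The trade-off is that your route proves a reusable general lemma about extensions from ideals, while the paper's computation exploits the Fock-space structure and is self-contained.
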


\begin{proof}
By positivity we have $\sum_{k=0}^n \vphi(p_k) \leq 1$ for every $n \in \bN$ and so $\sum_{k=0}^\infty \vphi(p_k) < \infty$.
Moreover equation (\ref{eq:zero}) implies that $\sum_{k=0}^\infty \vphi(p_k) = 0$ if and only if $p_0 = 0$.
Now both items (i) and (ii) follow by using $(\sum_{k=0}^n p_k)_n$ as a contractive approximate identity of $\K(\F X)$.
For item (iii) use the Wold decomposition with respect to the quotient map $\T_X \to \O(A,X)$ and the KMS-condition on $\vphi$ to define the positive functional $\psi_{\fty} \colon \T_X \to \bC$ by
\[
\psi_{\fty}(f) 
:= \sum_{k, \ell=0}^{\infty} \vphi(p_k f p_\ell)
= \sum_{k, \ell=0}^{\infty} \vphi(p_k f p_\ell p_k)
=  \sum_{k=0}^{\infty} \vphi(p_k f p_k),
\]
and let
\[
\psi_{\infty}(f) := \vphi(f) - \psi_{\fty}(f)
\]
for $f \in \T_X$.
If $\psi_{\fty} \neq 0$ then $\la := \sum_{k=0}^\infty \vphi(p_k) = \nor{\psi_{\fty}}$, and so $\nor{\psi_{\infty}} = 1 - \la$.
Hence if $\la \in (0,1)$ we obtain the states
\[
\vphi_{\fty} := \la^{-1} \psi_{\fty} 
\qand
\vphi_{\infty} := (1 - \la)^{-1} \psi_{\infty}.
\]
Since there is a unique extension of a state from $\K(\F X)$ to $\T_X$ we get uniqueness of this decomposition.
As $\vphi_\infty(p_0) = 0$ it remains to show that $\vphi_{\fty}$ and $\vphi_{\infty}$ satisfy the KMS-condition.
Equivalently that $\psi_{\fty}$ does so.
By definition we have that $\psi_{\fty}(t(\xi^{\otimes n}) t(\eta^{\otimes m})^*) = 0$ when $n \neq m$.
Now if $n = m$ then we get
\begin{align*}
t(\eta^{\otimes n})^* p_k t(\xi^{\otimes n})
&=
\begin{cases}
p_{k-n} t(\eta^{\otimes n})^* t(\xi^{\otimes n}) p_{k-n} & \text{ if } k \geq n,\\
0 & \text{ otherwise}.
\end{cases}
\end{align*}
Therefore for all $n,m \in \bZ_+$ we obtain
\begin{align*}
\psi_{\fty}(t(\xi^{\otimes n}) t(\eta^{\otimes m})^*)
& =
\de_{n,m} \sum_{k=0}^\infty \vphi(p_k t(\xi^{\otimes n}) t(\eta^{\otimes m})^* p_k) \\
& =
\de_{n,m} e^{-n\be} \sum_{k=0}^\infty \vphi(t(\eta^{\otimes m})^* p_k t(\xi^{\otimes n})) \\
& =
\de_{n,m} e^{-n\be} \sum_{k \geq n}^\infty \vphi(p_{k-n} t(\eta^{\otimes m})^* t(\xi^{\otimes n})  p_{k-n}) \\
& =
\de_{n,m} e^{-n \be} \sum_{k =0}^\infty \vphi(p_k t(\eta^{\otimes m})^* t(\xi^{\otimes n}) p_k) 
 =
\de_{n,m} \psi_{\fty}(t(\eta^{\otimes m})^* t(\xi^{\otimes n}))
\end{align*}
and thus $\psi_{\fty}$ satisfies equation (\ref{eq:kms 2}).
\end{proof}

\begin{remark}
The convex decomposition is not weak*-continuous.
For example, for fixed $\vphi_\infty \in \Eq_\be^\infty(\T_X)$ and $\vphi_{\fty} \in \Eq_\be^{\fty}(\T_X)$, the states $\vphi_n = n^{-1}\vphi_{\fty} + (1 - n^{-1}) \vphi_\infty$ weak*-converge to $\vphi_\infty$.
However the infinite and the finite parts of all $\vphi_n$ stay the same.
\end{remark}

\section{Entropy}

We start with a remark that ensures that the quantities we are to introduce are independent of the choice of the unit decomposition.

\begin{remark}\label{R:ind un dec}
Let $\{x_1, \dots, x_d\}$ and $\{y_1, \dots, y_{d'}\}$ be two unit decompositions.
Then for $\tau \in \Tr(A)$ we get that
\begin{align*}
\sum_{|\mu| = k} \tau(\sca{x_\mu, x_\mu})
& = 
\sum_{|\mu| = k} \sum_{|\nu| = k} \tau( \sca{x_\mu, y_\nu} \sca{y_\nu, x_\mu}) 
 = 
\sum_{|\nu| = k} \sum_{|\mu| = k} \tau( \sca{y_\nu, x_\mu} \sca{x_\mu, y_\nu}) 
 =
\sum_{|\nu| = k} \tau( \sca{y_\nu, y_\nu}).
\end{align*}
That is, the value $\sum_{|\mu| = k} \tau(\sca{x_\mu, x_\mu})$ is independent of the unit decomposition.
\end{remark}

\begin{definition}\label{D:ctau}
Let $X$ be a C*-correspondence of finite rank over $A$ with respect to $\{x_1, \dots, x_d\}$ and let $\be > 0$.
For any $\tau \in \Tr(A)$ we define the formal series
\begin{equation}\label{eq:ctau}
c_{\tau,\be} := \sum_{k=0}^\infty e^{-k \be} \sum_{|\mu| = k} \tau(\sca{x_\mu, x_\mu}).
\end{equation}
We write $\Tr_\be(A) := \{ \tau \in \Tr(A) \mid c_{\tau,\be} < \infty\}$.
\end{definition}

Remark \ref{R:ind un dec} implies that $c_{\tau,\be}$ and $\Tr_\be(A)$ do not depend on the unit decomposition.
Since $x_\mt = 1_A$ then we see that $c_{\tau,\be} \geq 1$.
Moreover the set $\Tr_\be(A)$ is convex.
On the other extreme we have the notion of averages.

\begin{definition}
Let $X$ be a C*-correspondence of finite rank over $A$ with respect to $\{x_1, \dots, x_d\}$ and let $\be > 0$.
Let $\Avt_\be(A)$ be the set of the tracial states $\tau$ of $A$ that satisfy
\[
\tau(a) = e^{-\be} \sum_{i \in [d]} \tau(\sca{x_i, a x_i}) \foral a \in A.
\]
\end{definition}

As in Remark \ref{R:ind un dec} we have that $\Avt_\be(A)$ does not depend on the decomposition $\{x_1, \dots, x_d\}$ of the unit.
The next proposition marks that $\Tr_\be(A) \cap \Avt_\be(A) = \mt$.

\begin{proposition}\label{P:hav}
Let $X$ be a C*-correspondence of finite rank over $A$ and let $\be > 0$.
If $\tau \in \Avt_\be(A)$ then $c_{\tau,\be} = \infty$.
\end{proposition}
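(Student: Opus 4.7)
The plan is to establish by induction on $k$ the identity
\[
e^{-k\be} \sum_{|\mu| = k} \tau(\sca{x_\mu, x_\mu}) = 1 \qforal k \in \bZ_+.
\]
Once this is in hand, summing over $k$ immediately yields $c_{\tau,\be} = \sum_{k=0}^\infty 1 = \infty$, as advertised. By Proposition \ref{P:unital} we may (and will) assume that $X$ is unital, so that $x_\mt = 1_A$ and the base case $k=0$ reads $\tau(\sca{1_A, 1_A}) = \tau(1_A) = 1$.

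For the inductive step, the key observation is that a word $\mu$ of length $k+1$ decomposes as $\mu = \mu' \cdot \mu_1$ where $\mu'$ is the prefix of length $k$ obtained by deleting the final letter $\mu_1 \in [d]$; correspondingly, under the identification $X^{\otimes k+1} = X^{\otimes k} \otimes X$, we have $x_\mu = x_{\mu'} \otimes x_{\mu_1}$. The inner product on the balanced tensor product then gives
\[
\sca{x_\mu, x_\mu}
= \sca{x_{\mu_1}, \phi_X(\sca{x_{\mu'}, x_{\mu'}}) x_{\mu_1}}
= \sca{x_{\mu_1}, \sca{x_{\mu'}, x_{\mu'}} x_{\mu_1}}.
\]
Applying $\tau$ and summing over all $\mu$ of length $k+1$, I would swap the order of summation and group the inner sum over $\mu_1 \in [d]$ with $a := \sca{x_{\mu'}, x_{\mu'}} \in A$ fixed. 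The defining equation of $\Avt_\be(A)$ then gives
\[
\sum_{\mu_1 \in [d]} \tau(\sca{x_{\mu_1}, a\, x_{\mu_1}}) = e^{\be}\, \tau(a),
\]
so that
\[
\sum_{|\mu| = k+1} \tau(\sca{x_\mu, x_\mu})
= e^{\be} \sum_{|\mu'| = k} \tau(\sca{x_{\mu'}, x_{\mu'}})
= e^{\be} \cdot e^{k\be} = e^{(k+1)\be},
\]
by the inductive hypothesis. This closes the induction.

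There is no serious obstacle; the only point to get right is the tensor-product bookkeeping (the convention $X^{\otimes n} = X^{\otimes n-1} \otimes X$ with $x_\mu = x_{\mu_n} \otimes \cdots \otimes x_{\mu_1}$ means the final letter is the ``$X$-slot'', which is precisely the slot into which the averaging condition feeds the element $a = \sca{x_{\mu'}, x_{\mu'}}$). Conceptually, this reflects the fact that the averaging condition says exactly that $\tau$ is invariant under the completely positive map $S_\be(a) := e^{-\be} \sum_i \sca{x_i, a x_i}$, while a routine iteration identifies $e^{-k\be} \sum_{|\mu|=k} \sca{x_\mu, x_\mu}$ with $S_\be^k(1_A)$; thus $\tau(S_\be^k(1_A)) = \tau(1_A) = 1$ for every $k$, and the series diverges.
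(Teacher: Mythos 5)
Your proof is correct and takes essentially the same route as the paper: the paper's proof is precisely ``induction yields the averaging formula $\tau(a) = e^{-k\be}\sum_{|\mu|=k}\tau(\sca{x_\mu, a\, x_\mu})$ for all $k$,'' specialized at $a = 1_A$, and your induction (with the prefix/last-letter decomposition feeding $a = \sca{x_{\mu'},x_{\mu'}}$ into the averaging condition) is exactly that argument written out. No gaps.
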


\begin{proof}
Induction yields an average formula for all words of length $k$, i.e.,
\begin{equation}
\tau(a) = e^{-k \be} \sum_{|\mu| = k} \tau(\sca{x_\mu, a x_\mu}) \foral a \in A, k \in \bN. \qedhere
\end{equation}
\end{proof}

The root test implies a notion of entropy for $\tau \in \Tr(A)$ that connects with convergence of $c_{\tau,\be}$.
We are going to use also two notions of entropy for $X$.
As we use entropy for convergence of $c_{\tau, \be}$ we set $\limsup_k k^{-1} \log a_k = 0$ if $a_k = 0$ eventually.

\begin{definition}
Let $X$ be a C*-correspondence of finite rank over $A$ with respect to a unit decomposition $x = \{x_1, \dots, x_d\}$.\\
(1) The \emph{entropy} of a $\tau \in \Tr(A)$ is given by
\[
h_X^\tau := \limsup_k \frac{1}{k} \log \sum_{|\mu| = k} \tau(\sca{x_\mu, x_\mu}).
\]
(2) The \emph{entropy} of $x = \{x_1, \dots, x_d\}$ is defined by
\[
h_X^x := \limsup_k \frac{1}{k} \log \nor{\sum_{|\mu| = k} \sca{x_\mu, x_\mu}}_{A}.
\]
(3) The \emph{strong entropy} of $X$ is defined by
\[
h_X^s := \inf\{ h_X^x \mid x =\{x_1, \dots, x_d\} \textup{ is a unit decomposition for $X$}\}.
\]
(4) The \emph{entropy} of $X$ is defined by
\[
h_X := \inf\{ \be>0 \mid \Eq_\be(\T_X) \neq \mt\}.
\]
\end{definition}

\begin{remark}
Due to remark \ref{R:ind un dec}, the entropy $h_X^\tau$ is independent of the unit decomposition.
Likewise $h_X^s = h_X^x$ for any unit decomposition when $A$ is abelian.
Furthermore the $\limsup$ in $h_X^x$ is actually the limit of a decreasing sequence.
Indeed for $k_1, k_2 \in \bN$ with $k_1 + k_2 = k$ we get
\begin{align*}
\sum_{\mu = k} \sca{x_\mu, x_\mu}
& =
\sum_{|\nu_1| = k_1, |\nu_2| = k_2} \sca{x_{\nu_1} \otimes x_{\nu_2}, x_{\nu_1} \otimes x_{\nu_2}} \\
& =
\sum_{|\nu_1| = k_1} \sca{x_{\nu_1}, (\sum_{|\nu_2| = k_2}\sca{x_{\nu_2},x_{\nu_2}}) (x_{\nu_1})} 
 \leq
\nor{\sum_{|\nu_2| = k_2} \sca{x_{\nu_2},x_{\nu_2}}}_{A} \sum_{|\nu_1| = k_1} \sca{x_{\nu_1}, x_{\nu_1}}.
\end{align*}
Therefore the sequence $\nor{\sum_{|\mu| = k} \sca{x_\mu, x_\mu}}_A$ is submultiplicative.
\end{remark}

We close this section with a connection between entropies and $\Eq_\be(\T_X)$.
We shall see later that Proposition \ref{P:hx}(iv) can follow from the complete paramet\-ri\-za\-tion of $\Eq_\be^{\fty}(\T_X)$ and $\Eq_\be^\infty(\T_X)$.
Item (v) below is basically a rewording of \cite[Theorem 2.5 and Corollary 2.6]{PWY00}.

\begin{proposition}\label{P:hx}
Let $X$ be a C*-correspondence of finite rank over $A$ and let $\be > 0$.
\begin{enumerate}[leftmargin=30pt, itemindent=0pt, itemsep=1pt]
\item If $\tau \in \Tr_\be(A) \cup \Avt_\be(A)$ then $h_X^\tau \leq \be$.
\item For every $\tau \in \Tr(A)$ we have that $h_X^\tau \leq h_X^s \leq \log d$.
\item $\Tr_\be(A) = \Tr(A)$ whenever $\be > h_X^s$.
\item $\Eq_\be^\infty(\T_X) = \mt$ and $\Eq_\be(\T_X) = \Eq_\be^{\fty}(\T_X)$ whenever $\be > h_X^s$.
\item If $A$ is abelian and $h_X^s > 0$ then $\Avt_{h_X^s}(A) \neq \mt$.
\end{enumerate}
\end{proposition}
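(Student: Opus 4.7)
The plan is to dispatch items (i)--(iii) by direct root-test manipulations, prove (iv) through a finite-rank tail identity inside $\T_{X^1}$, and quote \cite{PWY00} for (v).

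For (i), I would use that $\tau \in \Tr_\be(A)$ forces the summands $e^{-k\be}\sum_{|\mu|=k}\tau(\sca{x_\mu,x_\mu})$ to tend to zero, whence $h_X^\tau \leq \be$; for $\tau \in \Avt_\be(A)$ iterating the averaging identity against $a = 1_A$ (as in the proof of Proposition \ref{P:hav}) yields the exact equality $\sum_{|\mu|=k}\tau(\sca{x_\mu,x_\mu}) = e^{k\be}$, so that $h_X^\tau = \be$. For (ii) the inequality $\sum_{|\mu|=k}\tau(\sca{x_\mu,x_\mu}) \leq \nor{\sum_{|\mu|=k}\sca{x_\mu,x_\mu}}_A$ (valid because $\tau$ is a state) gives $h_X^\tau \leq h_X^x$ for every unit decomposition $x$ and hence $h_X^\tau \leq h_X^s$; the bound $h_X^x \leq \log d$ comes from $\nor{\sca{x_\mu,x_\mu}} \leq 1$ together with $d^k$ words of length $k$ in the triangle inequality. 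For (iii), given $\be > h_X^s$ I would pick a decomposition with $h_X^x < \be$, so that $c_{\tau,\be}$ is dominated termwise by a convergent geometric series for every $\tau \in \Tr(A)$.

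The heart of the proposition is (iv). Combining the formula $p_k = \sum_{|\mu|=k} t(x_\mu) t(x_\mu)^* - \sum_{|\nu|=k+1} t(x_\nu) t(x_\nu)^*$ with (\ref{eq:it id}) and telescoping, I would record the identity
\[
\sum_{j=0}^{N} p_j \;=\; 1 - \sum_{|\nu|=N+1} t(x_\nu) t(x_\nu)^* \qquad \text{in } \T_{X^1}.
\]
Extending $\vphi \in \Eq_\be(\T_X)$ to $\T_{X^1}$ via Proposition \ref{P:unital} and applying the KMS relation (\ref{eq:kms 2}) to each tail term then gives
\[
\sum_{j=0}^{N} \vphi(p_j) \;=\; 1 - e^{-(N+1)\be} \vphi\!\left( \sum_{|\nu|=N+1} \sca{x_\nu,x_\nu} \right).
\]
Choosing a unit decomposition with $h_X^x < \be$ (possible precisely because $\be > h_X^s$), the tail is bounded by $e^{-(N+1)\be}\nor{\sum_{|\nu|=N+1}\sca{x_\nu,x_\nu}}_A$ and decays geometrically. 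Hence $\sum_{j=0}^\infty \vphi(p_j) = 1$, putting $\vphi \in \Eq_\be^{\fty}(\T_X)$ and yielding the second half of the claim; this simultaneously rules out $\Eq_\be^\infty(\T_X)$, since (\ref{eq:zero}) would otherwise force all $\vphi(p_j)$ to vanish, contradicting the displayed identity.

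Item (v) I would derive as a direct translation of \cite[Theorem 2.5 and Corollary 2.6]{PWY00}: in the abelian-coefficient setting with $h_X^s > 0$, that analysis produces a tracial state on $A$ realizing the averaging condition precisely at $\be = h_X^s$. The hard part will be securing the identity in (iv): the relation $\sum_j p_j = 1$ holds only in the SOT-sense on $\F X$, so the finite-rank hypothesis is essential to recast the tail as a single adjointable projection inside $\T_X$, which is what lets the KMS identity transfer control from the $p_j$ to $\nor{\sum_{|\nu|=N+1}\sca{x_\nu,x_\nu}}_A$ and ultimately to the strong entropy.
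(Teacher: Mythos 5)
Your proposal is correct and follows essentially the same route as the paper: root test plus the iterated averaging identity for (i), the state bound $\sum_{|\mu|=k}\tau(\sca{x_\mu,x_\mu}) \leq \nor{\sum_{|\mu|=k}\sca{x_\mu,x_\mu}}_A \leq d^k$ for (ii)--(iii), the telescoped identity $\sum_{j=0}^{N} p_j = 1 - \sum_{|\nu|=N+1} t(x_\nu)t(x_\nu)^*$ in $\T_{X^1}$ combined with the KMS relation for (iv), and the spectral-radius argument of \cite{PWY00} for (v). The only cosmetic difference is that in (iv) you run the computation forwards to show every equilibrium state is of finite type, whereas the paper argues contrapositively that an infinite-type state forces $\be \leq h_X^x$ for every unit decomposition and hence $\be \leq h_X^s$; the underlying identity and estimates are the same.
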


\begin{proof}
Let $x=\{x_1, \dots, x_d\}$ be a decomposition of the unit.
Item (i) follows directly from the root test when $\tau \in \Tr_\be(A)$ and from Proposition \ref{P:hav} when $\tau \in \Avt_\be(A)$.
Moreover it is straightforward to check that if $\tau \in \Tr(A)$ then
\begin{align*}
\sum_{|\mu| = k} \tau(\sca{x_\mu, x_\mu}) 
\leq
\nor{\sum_{|\mu| = k} \sca{x_\mu, x_\mu}}_{A} 
\leq d^k.
\end{align*}
As the left hand side does not depend on $x$, taking infimum over all unit decompositions gives that $h_X^\tau \leq h_X^s \leq \log d$.
For item (iii) suppose that $\be \in (h_X^s, \infty)$ and choose a unit decomposition $x = \{x_1, \dots, x_d\}$ such that $h_X^s \leq h_X^x < \be$.
Then for any $\tau \in \Tr(A)$ we have that
\[
\limsup_k (e^{-k \be} \sum_{|\mu| = k} \tau(\sca{x_\mu,x_\mu})^{1/k}
\leq
e^{-\be} e^{h_X^x} < 1
\]
giving that $c_{\tau,\be} < \infty$.
For item (iv), if $\vphi \in \Eq_\be^\infty(\T_X)$ then $\vphi(p_0) = 0$ and thus $\vphi(p_k) = 0$ for all $k \in \bZ_+$ by equation (\ref{eq:zero}).
But then the KMS-condition yields
\begin{align*}
1 
& = 
\sum_{|\mu| = k} \vphi( t(x_\mu) t(x_\mu)^*)
 = 
e^{-k \be} \sum_{|\mu| = k} \vphi \pi(\sca{x_\mu, x_\mu})
\leq
e^{-k \be} \nor{ \sum_{|\mu| = k} \sca{x_\mu, x_\mu}}_{A},
\end{align*}
as $\vphi\pi \in \Tr(A)$.
Hence $\be \leq k^{-1} \log \nor{ \sum_{|\mu| = k} \sca{x_\mu, x_\mu}}_{A}$ for all $k \in \bZ_+$, which gives that $\be \leq h_X^x$.
Taking the infimum over all unit decompositions yields $\be \leq h_X^s$.
Finally, if $A$ is abelian then the arguments of \cite[Theorem 2.5 and Corollary 2.6]{PWY00} apply to give that $\Avt_{h_X^s}(A) \neq \mt$.
In short let the map 
\[
\psi \colon A \to A
\; \textup{ such that } \;
\psi(a) = \sum_{i \in [d]} \sca{x_i, a x_i}.
\]
As $\psi$ is a positive map we have $\nor{\psi^k} = \nor{\psi^k(1)} = \nor{\sum_{|\mu| = k} \sca{x_\mu, x_\mu}}_A$ for all $k \in \bN$.
Therefore
\[
h_X^s = \lim_k \log \nor{\psi^k}^{1/k} = \log \la_\psi,
\]
where $\la_\psi$ is the spectral radius of $\psi$.
Then \cite[Theorem 2.5 and Corollary 2.6]{PWY00} imply that $\la_\psi$ is an eigenvalue of the adjoint of $\psi$ on the states of $A$, i.e., there is $\tau \in \S(A)$ such that $\tau \psi = \la_\psi \tau \psi$ (the fullness condition of \cite{PWY00} is not required here).
Hence $\tau$ gives a tracial state in $\Avt_{h_X^s}(A)$.
\end{proof}

\section{The finite part of the equilibrium states}

In this section we parametrize the states in $\Eq_\be^{\fty}(\T_X)$ for $\be > 0$ and consequently we show how this induces a parametrization for all $\Eq_\be^{\fty}(\O(J,X))$.
Passing from $\Tr_\be(A)$ to $\Eq_\be^{\fty}(\T_X)$ uses essentially \cite[proof of Theorem 2.1]{LacNes04}.
Showing that this construction is a bijection generalizes the corresponding arguments from \cite{LacRae10}.

\begin{theorem}\label{T:para}
Let $X$ be a C*-correspondence of finite rank over $A$ and let $\be > 0$.
Then there is a bijection
\[
\Phi \colon \Tr_\be(A) \to \Eq_\be^{\fty}(\T_X) \text{ such that } \Phi(\tau)(p_0) = c_{\tau,\be}^{-1}.
\]
If $x = \{x_1, \dots, x_d\}$ is a decomposition of the unit then $\Phi$ is given by
\begin{equation}\label{eq:kms 3}
\Phi(\tau) (t(\xi^{\otimes n}) t(\xi^{\otimes m})^*)
=
\de_{n,m} c_{\tau,\be}^{-1} \sum_{\mu \in \bF_+^d} e^{-(|\mu|+n) \be} \tau(\sca{\eta^{\otimes m} \otimes x_\mu, \xi^{\otimes n} \otimes x_\mu})
\end{equation}
for all $\xi^{\otimes n} \in X^{\otimes n}$ and $\eta^{\otimes m} \in X^{\otimes m}$.
If, in addition, $\Eq_\be^{\fty}(\T_X)$ is weak*-closed then $\Phi$ is a weak*-homeomorphism between weak*-compact sets.
\end{theorem}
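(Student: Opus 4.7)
The plan is to prove the theorem in three steps: construction of the forward map $\Phi$, construction of the inverse, and then the weak*-homeomorphism statement.

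Given $\tau \in \Tr_\be(A)$, I would define $\Phi(\tau)$ by formula (\ref{eq:kms 3}); the series converges absolutely because $c_{\tau,\be}<\infty$. The key observation is the Hilbert-module identity
\[
\langle \eta^{\otimes n} \otimes x_\mu,\xi^{\otimes n} \otimes x_\mu\rangle_{X^{\otimes n+|\mu|}} = \langle x_\mu, \langle \eta^{\otimes n}, \xi^{\otimes n}\rangle x_\mu\rangle_{X^{\otimes |\mu|}},
\]
which, when substituted into (\ref{eq:kms 3}) in the $n=m$ case, gives $\Phi(\tau)(t(\xi^{\otimes n}) t(\eta^{\otimes n})^*) = e^{-n\be}\Phi(\tau)(\pi(\langle \eta^{\otimes n}, \xi^{\otimes n}\rangle))$; together with the $\de_{n,m}$ this is precisely the KMS relation (\ref{eq:kms 2}), so by Proposition \ref{P:char} the functional $\Phi(\tau)$ is a $(\sigma,\be)$-KMS state as soon as it is positive. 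Positivity I would verify following \cite[proof of Theorem 2.1]{LacNes04}: the induced functionals $\tilde\tau_k(T) := \sum_{|\mu|=k}\tau(\langle x_\mu, T x_\mu\rangle)$ are positive traces on each $\K(X^{\otimes k})$, and their weighted sum $c_{\tau,\be}^{-1}\sum_k e^{-k\be}\tilde\tau_k$ is a positive functional on $\K(\F X) = \bigoplus_k \K(X^{\otimes k})$ whose unique KMS-extension to $\T_X$, provided by Theorem \ref{T:con}(i), reproduces (\ref{eq:kms 3}). The computation $\sum_k \Phi(\tau)(p_k) = c_{\tau,\be}^{-1}c_{\tau,\be} = 1$ places $\Phi(\tau) \in \Eq_\be^{\fty}(\T_X)$, and evaluating on $p_0$ via (\ref{eq:it id}) yields $\Phi(\tau)(p_0) = c_{\tau,\be}^{-1}$.

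For the inverse, given $\vphi \in \Eq_\be^{\fty}(\T_X)$ I note that (\ref{eq:zero}) combined with $\sum_k\vphi(p_k)=1$ forces $\vphi(p_0)>0$. Set $\tau(a) := \vphi(p_0)^{-1}\vphi(\pi(a)p_0)$. Since $p_0$ commutes with $\pi(A)$, the KMS condition gives $\vphi(\pi(ab)p_0) = \vphi(\pi(b)p_0\pi(a)) = \vphi(\pi(ba)p_0)$, so $\tau$ is a tracial state. Using the identity $p_k = \sum_{|\mu|=k}t(x_\mu)p_0 t(x_\mu)^*$ and applying KMS to each creation operator $t(x_\mu)$ yields
\[
\vphi(\pi(a)p_k) = e^{-k\be}\vphi(p_0)\sum_{|\mu|=k}\tau(\langle x_\mu, a x_\mu\rangle).
\]
Summing over $k$ and exploiting $\sum_k\vphi(p_k) = 1$ (which via Cauchy-Schwarz lets us pass the sum through $\vphi(\pi(a)\,\cdot\,)$) produces $\vphi(\pi(a)) = \vphi(p_0)\,c_{\tau,\be}\,\Phi(\tau)(\pi(a))$. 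Setting $a = 1_A$ forces $c_{\tau,\be} = \vphi(p_0)^{-1} < \infty$, so $\tau \in \Tr_\be(A)$; then $\vphi$ and $\Phi(\tau)$ agree on $\pi(A)$, hence on all of $\T_X$ by Proposition \ref{P:char}. This establishes surjectivity of $\Phi$, and injectivity follows from the explicit recovery $\tau(a) = c_{\tau,\be}\,\Phi(\tau)(\pi(a)p_0)$.

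Finally, assume $\Eq_\be^{\fty}(\T_X)$ is weak*-closed. It is then weak*-compact as a closed subset of the state space, and the continuous function $\vphi \mapsto \vphi(p_0)$ attains a strictly positive minimum on it; consequently $\Phi^{-1}\colon \vphi \mapsto \vphi(p_0)^{-1}\vphi(\pi(\cdot)p_0)$ is weak*-continuous, being a ratio of weak*-continuous functions with uniformly nonvanishing denominator. A continuous bijection from a compact Hausdorff space is a homeomorphism onto its image, so $\Phi^{-1}$ is a weak*-homeomorphism onto $\Tr_\be(A)$ (which is therefore itself compact), and $\Phi$ is the asserted weak*-homeomorphism. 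The main obstacle in this program is the positivity argument in the first step: although the KMS relation drops out painlessly from the module identity, showing that formula (\ref{eq:kms 3}) genuinely defines a positive functional requires setting up the layer-wise trace decomposition on $\K(\F X)$ and invoking the unique-extension part of Theorem \ref{T:con}(i); the remainder reduces to routine KMS manipulations and a standard compactness argument.
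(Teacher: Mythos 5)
Your overall architecture coincides with the paper's: the same explicit formula defines $\Phi$, the inverse is recovered from $\tau_\vphi(a)=\vphi(p_0)^{-1}\vphi(p_0\pi(a)p_0)$, the identity $c_{\tau_\vphi,\be}=\vphi(p_0)^{-1}$ is obtained the same way, and the final continuous-bijection-from-a-compactum argument for the weak*-homeomorphism is identical. The module identity you use to extract the KMS relation from (\ref{eq:kms 3}) is exactly the computation in the paper, and your Cauchy--Schwarz passage of $\sum_k p_k$ through $\vphi(\pi(a)\,\cdot\,)$ is a correct substitute for the paper's appeal to $\vphi$ being implemented on $\K(\F X)$. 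The one place you genuinely diverge is the positivity of $\Phi(\tau)$. The paper realizes $\Phi(\tau)$ concretely as the normalized series $c_{\tau,\be}^{-1}\sum_k e^{-k\be}\sum_{|\mu|=k}\vphi_{\tau,\mu}$ of vector states $\vphi_{\tau,\mu}(f)=\sca{x_\mu\otimes x_\tau, f\, x_\mu\otimes x_\tau}$ for the induced representation of $\T_X$ on $\F X\otimes_{\rho_\tau}H_\tau$, so positivity and the norm bound are automatic and no extension from an ideal is ever needed. You instead route positivity through induced traces on the blocks $\K(X^{\otimes k})$ plus unique extension from $\K(\F X)$ --- essentially the Combes--Zettl/Laca--Neshveyev mechanism that the introduction explicitly says this paper replaces by induced representations.

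Two steps of that detour need repair before it closes. First, $\K(\F X)\neq\bigoplus_k\K(X^{\otimes k})$: the compacts on the Fock module contain all off-diagonal operators $\theta_{\xi^{\otimes n},\eta^{\otimes m}}$ with $n\neq m$, so your weighted sum of the $\tilde\tau_k$ is only defined on the diagonal subalgebra. You would need to precompose with $T\mapsto\sum_k p_kTp_k$, or better, note that $f\mapsto\sum_{|\mu|=k}\tau(\sca{x_\mu,\,p_kfp_k\,x_\mu})$ is already a positive functional on all of $\T_X$ --- which is precisely the paper's $\sum_{|\mu|=k}\vphi_{\tau,\mu}$, at which point the detour collapses into the paper's construction. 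Second, Theorem \ref{T:con}(i) is an equivalence for states that already lie in $\Eq_\be(\T_X)$; it does not assert that a $(\si,\be)$-KMS state of the ideal $\K(\F X)$ admits a KMS extension to $\T_X$, so invoking it for the ``unique KMS-extension'' assumes what is to be proved --- you would still have to verify the KMS condition and formula (\ref{eq:kms 3}) for the unique state extension by hand, and that is the bulk of the work you have deferred. Both issues disappear if you adopt the induced-representation construction; the remainder of your argument is correct and matches the paper.
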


\begin{proof}
Equation (\ref{eq:kms 3}) is independent of the unit decomposition for $\tau \in \Tr_\be(A)$.
Indeed let $y = \{y_1, \dots, y_{d'}\}$ be a second decomposition.
If $n \neq m$ then there is nothing to show.
For $n = m$ we directly verify that
\begin{align*}
\sum_{|\mu| = k} \tau(\sca{\eta^{\otimes n} \otimes x_\mu, \xi^{\otimes n} \otimes x_\mu})  
& = 
\sum_{|\mu| = k} \sum_{|\nu| = k} 
\tau(\sca{\eta^{\otimes n} \otimes x_\mu, \xi^{\otimes n} \otimes \theta_{y_\nu, y_\nu} x_\mu} \\
& =
\sum_{|\mu| = k} \sum_{|\nu| = k}
\tau(\sca{x_\mu, \sca{\eta^{\otimes n}, \xi^{\otimes n}} y_\nu} \sca{y_\nu, x_\mu}) \\
& =
\sum_{|\nu| = k} \sum_{|\mu| = k} 
\tau(\sca{y_\nu, x_\mu} \sca{x_\mu, \sca{\eta^{\otimes n}, \xi^{\otimes n}} y_\nu}) \\
& =
\sum_{|\nu| = k} \tau(\sca{\eta^{\otimes n} \otimes y_\nu, \xi^{\otimes n} \otimes y_\nu}).
\end{align*}

Now we proceed to the construction of $\Phi$.
First we show that $\vphi_\tau \equiv \Phi(\tau)$ exists and is in $\Eq_\be^{\fty}(\T_X)$ when $\tau \in \Tr_\be(A)$.
Let $(H_\tau, x_\tau, \rho_\tau)$ be the GNS-representation associated to $\tau$ and consider  the induced pair $(\rho, v) := (\pi \otimes I, t \otimes I)$ for $\T_X$ acting on $\F X \otimes_{\rho_{\tau}} H_\tau$.
For any word $\mu$ on the $d$ symbols define the positive vector state $\vphi_{\tau, \mu}$ of $\T_X$ be given by
\[
\vphi_{\tau, \mu}(f) = \sca{x_\mu \otimes x_\tau, (\rho \times v)(f) x_\mu \otimes x_\tau}_H \text{ for } f \in \T_X.
\]
We then define
\[
\vphi_\tau := c_{\tau,\be}^{-1} \sum_{k=0}^\infty e^{-k\be} \sum_{|\mu| = k} \vphi_{\tau, \mu}.
\]
To see that it is indeed well defined (and a state) on $\T_X$ first check that
\begin{align*}
c_{\tau,\be}^{-1} \sum_{k=0}^\infty e^{-k\be} \sum_{|\mu| = k} \vphi_{\tau, \mu}(\pi(1_A))
& =
c_{\tau,\be}^{-1} \sum_{k=0}^\infty e^{-k\be} \sum_{|\mu| = k} \tau(\sca{x_\mu, x_\mu})
=
1.
\end{align*}
Likewise we have $\vphi_{\tau, \mu}(f) \leq \nor{f} \vphi_{\tau, \mu}(\pi(1_A))$ for all $0 \leq f \in \T_X$, and thus
\begin{align*}
c_{\tau,\be}^{-1} \sum_{k=0}^\infty e^{-k\be} \sum_{|\mu| = k} \vphi_{\tau, \mu}(f)
& \leq
c_{\tau,\be}^{-1} \sum_{k=0}^\infty e^{-k\be} \sum_{|\mu| = k} \nor{f} \cdot \vphi_{\tau, \mu}(\pi(1_A)) 
=
\nor{f}.
\end{align*}
Next we show that $\vphi_\tau$ satisfies equation (\ref{eq:kms 3}).
If $n \neq m$ then for all $\mu$ we get that
\[
\vphi_{\tau, \mu}(t(\xi^{\otimes n}) t(\eta^{\otimes m})^*)
=
\tau(\sca{t(\xi^{\otimes n})^* x_\mu, t(\eta^{\otimes m})^* x_\mu}_{\F X})
= 0,
\]
and thus $\vphi_\tau(t(\xi^{\otimes n}) t(\eta^{\otimes m})^*) = 0$.
If $n = m$ and $k \geq n$, then for all $x_\mu$ with $|\mu| < n$ we get that
\[
\vphi_{\tau, \mu}(t(\xi^{\otimes n}) t(\eta^{\otimes n})^*)
=
\tau(\sca{t(\xi^{\otimes n})^* x_\mu, t(\eta^{\otimes n})^* x_\mu}_{\F X}) = 0.
\]
On the other hand if $|\mu| = k \geq n$ then recall that $\sum_{|\mu| = k} t(x_\mu) t(x_\mu)^*$ acts as a unit on $t(X^{\otimes \ell})$ for all $\ell \geq k$.
Thus we get
\begin{align*}
\sum_{|\mu| = k} \vphi_{\tau, \mu}(t(\xi^{\otimes n}) t(\eta^{\otimes n})^*)
& = 
\sum_{|\mu| = k} \tau \pi^{-1}( t(x_\mu)^* t(\xi^{\otimes n}) t(\eta^{\otimes n})^* t(x_\mu) ) \\
& =
\sum_{|\mu| = k} \sum_{|\nu| = k - n } \tau \pi^{-1} ( t(x_\mu)^* t(\xi^{\otimes n}) t(x_\nu) t(x_\nu)^* t(\eta^{\otimes n})^* t(x_\mu)) \\
& = 
\sum_{|\nu| = k - n } \sum_{|\mu| = k} \tau \pi^{-1} (t(x_\nu)^* t(\eta^{\otimes n})^* t(x_\mu)  t(x_\mu)^* t(\xi^{\otimes n}) t(x_\nu)) \\
& = 
\sum_{|\nu| = k - n }\tau \pi^{-1} (t(x_\nu)^* t(\eta^{\otimes n})^* t(\xi^{\otimes n}) t(x_\nu)) \\
& = 
\sum_{|\nu| = k - n} \vphi_{\tau, \nu}(t(\eta^{\otimes n})^* t(\xi^{\otimes n}) ).
\end{align*} 
Hence we obtain
\begin{align*}
\vphi_\tau(t(\xi^{\otimes n}) t(\eta^{\otimes n})^*)
& =
c_{\tau,\be}^{-1} \sum_{k= 0}^\infty e^{-k \be} \sum_{|\mu| = k} \vphi_{\tau, \mu}(t(\xi^{\otimes n}) t(\eta^{\otimes n})^*) \\
& =
c_{\tau,\be}^{-1} \sum_{k= n}^\infty e^{-k \be} \sum_{|\mu| = k - n} \vphi_{\tau, \mu}(t(\eta^{\otimes n})^* t(\xi^{\otimes n})) \\
& =
c_{\tau,\be}^{-1} \sum_{k= 0}^\infty e^{-(k+n) \be} \sum_{|\mu| = k} \vphi_{\tau, \mu}(t(\eta^{\otimes n})^* t(\xi^{\otimes n})) \\
& =
c_{\tau,\be}^{-1} \sum_{k= 0}^\infty e^{-(k+n) \be} \sum_{|\mu| = k} \tau(\sca{\eta^{\otimes m} \otimes x_\mu, \xi^{\otimes n} \otimes x_\mu}).
\end{align*}
We verify that $\vphi_\tau \in \Eq_\be(\T_X)$ by using Proposition \ref{P:char}.
By definition we have that if $n \neq m$ then $\vphi_\tau(t(\xi^{\otimes n}) t(\eta^{\otimes m})^*) = 0$.
Now if $n = m$ then we directly compute
\begin{align*}
\vphi_\tau(t(\xi^{\otimes n}) t(\eta^{\otimes n})^*)
& =
c_{\tau,\be}^{-1} \sum_{k= 0}^\infty e^{-(k+n) \be} \sum_{|\mu| = k} \vphi_{\tau, \mu}(t(\eta^{\otimes n})^* t(\xi^{\otimes n})) \\
& = 
e^{-n\be} c_{\tau,\be}^{-1} \sum_{k= 0}^\infty e^{-k \be} \sum_{|\mu| = k} \vphi_{\tau, \mu}(t(\eta^{\otimes n})^* t(\xi^{\otimes n})) \\
& =
e^{-n \be} \vphi_\tau(t(\eta^{\otimes n})^* t(\xi^{\otimes n})).
\end{align*}
In order to show that $\vphi_\tau \in \Eq_\be^{\fty}(\T_X)$ we compute
\begin{align*}
\sum_{k=0}^n \vphi_\tau(p_k)
& =
1 - \sum_{|\nu| = n+1} \vphi_\tau(t(x_\nu) t(x_\nu)^*) \\
& =
1 - c_{\tau,\be}^{-1} \sum_{k=0}^\infty e^{-(n+1 + k)\be} \sum_{|\nu| = n+1} \sum_{|\mu| = k} \tau(\sca{x_\nu \otimes x_\mu, x_\nu \otimes x_\mu}) \\
& =
c_{\tau,\be}^{-1} \sum_{k=0}^n e^{-k \be} \sum_{|\mu| = k} \tau(\sca{x_\mu, x_\mu}),
\end{align*}
Applying for $n=0$ yields $\vphi_\tau(p_0) = c_{\tau,\be}^{-1}$.
Taking the limit $n \rightarrow \infty$ gives $\sum_{k=0}^\infty \vphi_\tau(p_k) = c_{\tau,\be}^{-1} c_{\tau, \be} = 1$, and so $\vphi_\tau \in \Eq_\be^{\fty}(\T_X)$.

Secondly we show that this correspondence is surjective.
To this end fix $\vphi \in \Eq_\be^{\fty}(\T_X)$.
Inequality (\ref{eq:zero}) gives that $\vphi(p_0) \neq 0$ and thus we can define the state $\tau_\vphi$ on $A$ by
\[
\tau_{\vphi}(a) := \vphi(p_0)^{-1} \vphi(p_0 \pi(a) p_0) \foral a \in A.
\]
Moreover $\tau_\vphi$ is in $\Tr(A)$ since
\begin{align*}
\vphi(p_0) \tau_\vphi(ab)
& =
\vphi(p_0 \pi(a) \pi(b) p_0) 
 =
\vphi(\pi(b) p_0 \pi(a)) 
 =
\vphi(p_0 \pi(b) \pi(a) p_0) 
 =
\vphi(p_0) \tau(ba),
\end{align*}
where we used that $p_0 \in \pi(A)'$ and $\si_{i\be}(\pi(a)) = \pi(a)$.
In order to show that $\tau_\vphi \in \Tr_\be(A)$ it suffices to show that
\[
\vphi(p_0)^{-1} = \sum_{k=0}^\infty e^{-k\be} \sum_{|\mu| = k} \tau_\vphi(\sca{x_\mu, x_\mu}).
\]
However a direct computation yields
\begin{align*}
\vphi(p_0) \sum_{|\mu| = k} \tau_\vphi(\sca{x_\mu, x_\mu})
& =
\sum_{|\mu| = k} \vphi(p_0 t(x_\mu)^* t(x_\mu) p_0) 
 = 
e^{k \be} \sum_{|\mu| = k} \vphi(t(x_\mu) p_0 t(x_\mu)^*) 
 =
e^{k\be} \vphi(p_k).
\end{align*}
Since $\vphi \in \Eq_\be^{\fty}(\T_X)$ we have
\[
\sum_{k=0}^\infty e^{-k \be} \sum_{|\mu| = k} \tau_{\vphi}(\sca{x_\mu, x_\mu})
=
\vphi(p_0)^{-1} \sum_{k=0}^\infty \vphi(p_k)
=
\vphi(p_0)^{-1}.
\]
Surjectivity now follows by showing that $\vphi = \Phi(\tau_\vphi)$.
Since both are $(\si, \be)$-KMS states, by Proposition \ref{P:char} it suffices to show that they agree on $\pi(A)$.
Since $\vphi$ is implemented by a state on $\K(\F X)$, for every $a \in A$ we have that
\begin{align*}
\vphi(\pi(a))
& = 
\lim_m \sum_{k, l=0}^m \vphi(p_k \pi(a) p_l) 
 = 
\lim_m \sum_{k = 0}^m \vphi(p_k \pi(a)) \\
& = 
\lim_m \sum_{k=0}^m \sum_{|\mu| = k} \vphi\big(t(x_\mu) p_0 t(x_\mu)^* \pi(a) \big) \\
& =
\lim_m \sum_{k=0}^m \sum_{|\mu| = m} e^{-k\be} \vphi\big(p_0 t(x_\mu)^* \pi(a) t(x_\mu) p_0\big) \\
& = 
c_{\tau_\vphi, \be}^{-1} \sum_{k=0}^\infty e^{-k\be} \sum_{|\mu| = k} \tau_\vphi(\sca{ x_\mu, a x_\mu)})
=
\Phi(\tau_\vphi)(\pi(a)).
\end{align*}
To show injectivity let $\tau \in \Tr_\be(A)$ and use the vector states $\vphi_{\tau, \mu}$ to get
\[
\sum_{|\mu| = k} \vphi_{\tau, \mu}(p_0 \pi(a) p_0)
= 
\begin{cases}
\tau(a) & \text{ if } k = 0,\\
0 & \text{ otherwise}.
\end{cases}
\]
Therefore we have
\[
\vphi_\tau(p_0 \pi(a) p_0)
=
c_{\tau,\be}^{-1} \sum_{k=0}^\infty e^{-k \be} \sum_{|\mu| = k} \vphi_{\tau, \mu}(p_0 \pi(a) p_0)
=
\vphi_\tau(p_0) \tau(a)
\]
showing that $\tau$ is uniquely identified by $\vphi_\tau$.

Finally we show that $\Phi^{-1}$ is weak*-continuous.
To this end let $\vphi_j, \vphi \in \Eq_\be^{\fty}(\T_X)$ such that $\vphi_j \longrightarrow \vphi$ in the weak*-topology.
As $\vphi_j(p_0) \neq 0$ and $\vphi(p_0) \neq 0$ we get that $\tau_{\vphi_j}(a) \longrightarrow \tau_{\vphi}(a)$ for all $a \in A$.
Hence $\Phi^{-1}$ is a continuous bijection from the compact space $\Eq_\be^{\fty}(\T_X)$ onto the Hausdorff space $\Tr_\be(A)$.
Thus if $\Eq_\be^{\fty}(\T_X)$ is weak*-closed then $\Phi$ is a homeomorphism.
\end{proof}

\begin{corollary}\label{C:ext}
Let $\Phi \colon \T_\be(A) \to \Eq_\be^{\fty}(\T_X)$ be the map of Theorem \ref{T:para}.
If $\tau = \la \tau_1 + (1- \la) \tau_2$ for $\tau_1, \tau _2 \in \Tr_\be(A)$ and $\la \in [0,1]$ then
\[
\Phi(\tau) = \la \frac{c_{\tau_1,\be}}{c_{\tau,\be}} \Phi(\tau_1) + (1 - \la) \frac{c_{\tau_2, \be}}{c_{\tau,\be}} \Phi(\tau_2).
\]
Conversely, if $\vphi = \la \vphi_1 + (1 - \la) \vphi_2$ for $\vphi_1, \vphi_2 \in \Eq_\be^{\fty}(\T_X)$ and $\la \in [0,1]$ then
\[
\Phi^{-1}(\vphi) 
= 
\la \frac{\vphi_1(p_0)}{\vphi(p_0)} \Phi^{-1}(\vphi_1) + (1 - \la) \frac{\vphi_2(p_0)}{\vphi(p_0)} \Phi^{-1}(\vphi_2).
\]
Consequently, the parametrization $\Phi$ fixes the extreme points.
\end{corollary}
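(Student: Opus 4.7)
The plan is to reduce both identities to straightforward linearity computations using what was already established in the proof of Theorem \ref{T:para}, and then derive the extreme-point claim formally from the two identities.

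For the first identity, the key observation is that $c_{\tau,\be}$ is \emph{affine} in $\tau$: since $c_{\tau,\be} = \sum_{k} e^{-k\be} \sum_{|\mu|=k} \tau(\sca{x_\mu, x_\mu})$, a convex combination $\tau = \la\tau_1 + (1-\la)\tau_2$ gives $c_{\tau,\be} = \la\, c_{\tau_1,\be} + (1-\la)\, c_{\tau_2,\be}$. By Proposition \ref{P:char} two KMS states agree as soon as they agree on $\pi(A)$, so it suffices to check the identity on $\pi(A)$. Using the explicit formula (\ref{eq:kms 3}) applied to $\pi(a)$ and multiplying through by $c_{\tau,\be}$, the right-hand sum is linear in $\tau$, which yields at once
\[
c_{\tau,\be}\,\Phi(\tau)(\pi(a)) \;=\; \la\, c_{\tau_1,\be}\,\Phi(\tau_1)(\pi(a)) \;+\; (1-\la)\, c_{\tau_2,\be}\,\Phi(\tau_2)(\pi(a)).
\]
Dividing by $c_{\tau,\be}$ gives the first identity on $\pi(A)$, hence on $\T_X$.

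For the second identity, I would start from the explicit inverse formula $\tau_\vphi(a) = \vphi(p_0)^{-1}\,\vphi(p_0\pi(a)p_0)$ extracted in the surjectivity step of Theorem \ref{T:para}. If $\vphi = \la\vphi_1 + (1-\la)\vphi_2$ then $\vphi(p_0) = \la\vphi_1(p_0) + (1-\la)\vphi_2(p_0)$, and applying the formula to each summand shows
\[
\vphi(p_0)\,\tau_\vphi(a) \;=\; \la\,\vphi_1(p_0)\,\tau_{\vphi_1}(a) \;+\; (1-\la)\,\vphi_2(p_0)\,\tau_{\vphi_2}(a).
\]
The coefficients $\la\vphi_1(p_0)/\vphi(p_0)$ and $(1-\la)\vphi_2(p_0)/\vphi(p_0)$ are non-negative and sum to $1$, so this is the required convex combination. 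Note all denominators are strictly positive because $\vphi_i \in \Eq_\be^{\fty}(\T_X)$ implies $\vphi_i(p_0) > 0$ by inequality (\ref{eq:zero}).

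For the extreme point preservation I would argue both directions by contraposition. Suppose $\tau \in \Tr_\be(A)$ is extreme and $\Phi(\tau) = \la\vphi_1 + (1-\la)\vphi_2$ with $\la \in (0,1)$ and $\vphi_i \in \Eq_\be^{\fty}(\T_X)$. Applying the second identity expresses $\tau$ as a genuine convex combination of $\Phi^{-1}(\vphi_1)$ and $\Phi^{-1}(\vphi_2)$ (genuine since both weights lie in $(0,1)$), forcing $\Phi^{-1}(\vphi_1) = \Phi^{-1}(\vphi_2) = \tau$ and hence $\vphi_1 = \vphi_2 = \Phi(\tau)$. The converse direction is symmetric, using the first identity and the positivity $c_{\tau_i,\be} \geq 1$ to ensure the weights are again in $(0,1)$. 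There is no real obstacle here beyond tracking denominators; the content of the corollary is essentially that both $\Phi$ and $\Phi^{-1}$ are affine once one reweights by the natural normalizing constants $c_{\tau,\be}$ and $\vphi(p_0)$.
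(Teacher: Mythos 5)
Your proposal is correct and follows essentially the same route as the paper: both identities are reduced to the affinity of $\tau\mapsto c_{\tau,\be}$ and of $\vphi\mapsto\vphi(p_0\,\pi(\cdot)\,p_0)$, with Proposition \ref{P:char} used to upgrade agreement on $\pi(A)$ to equality of KMS states, and the extreme-point claim then follows from the strict positivity of the weights. Your contraposition argument for the extreme points is just a spelled-out version of the paper's one-line remark.
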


\begin{proof}
For the forward direction it is clear that $c_{\tau,\be} = \la c_{\tau_1,\be} + (1 - \la) c_{\tau_2,\be}$.
Therefore the state
\[
\vphi' =  \la \frac{c_{\tau_1,\be}}{c_{\tau,\be}} \Phi(\tau_1) + (1 - \la) \frac{c_{\tau_2,\be}}{c_{\tau,\be}} \Phi(\tau_2)
\]
is in $\Eq_\be^{\fty}(\T_X)$ as a convex combination of states in $\Eq_\be^{\fty}(\T_X)$.
Now for every $a \in A$ we have
\begin{align*}
\vphi_\tau(\pi(a))
& =
\la c_{\tau,\be}^{-1} \sum_{k=0}^\infty e^{-k \be} \sum_{|\mu| = k} \tau_1 (\sca{x_\mu, a x_\mu}) 
+ 
(1 - \la) c_{\tau,\be}^{-1} \sum_{k=0}^\infty e^{-k \be} \sum_{|\mu| = k} \tau_2 (\sca{x_\mu, a x_\mu}) \\
& = 
\la \frac{c_{\tau_1,\be}}{c_{\tau,\be}} \Phi(\tau_1)(\pi(a)) + (1-\la) \frac{c_{\tau_2,\be}}{c_{\tau,\be}} \Phi(\tau_2)(\pi(a))
=
\vphi'(\pi(a)).
\end{align*}
As both $\vphi'$ and $\vphi_\tau$ are in $\Eq_\be(\T_X)$, Proposition \ref{P:char} implies that they are equal.
For the converse set $\tau_1 = \Phi^{-1}(\vphi_1)$, $\tau_2 = \Phi^{-1}(\vphi_2)$ and $\tau = \Phi^{-1}(\vphi)$.
Then by construction, for every $a \in A$ we get that
\begin{align*}
\vphi(p_0) \tau(a)
& = 
\la \vphi_1(p_0 \pi(a) p_0) + (1-\la) \vphi_2(p_0 \pi(a) p_0) 
 =
\la \vphi_1(p_0) \tau_1(a) + (1-\la) \vphi_2(p_0) \tau_2(a).
\end{align*}
Applying for $a = 1_A$ also gives that $\vphi(p_0) = \la_1 \vphi_1(p_0) + \la_2 \vphi_2(p_0)$.
Finally to see that $\Phi$ fixes the extreme points just notice that the $c$-constants are all non-zero and the equations for $\Phi(\tau)$ and $\Phi^{-1}(\vphi)$ are convex combinations of states.
\end{proof}

\begin{corollary}\label{C:com ent}
If $X$ is a C*-correspondence of finite rank over $A$ then $h_X \leq \max\{0, h_X^s\}$.
\end{corollary}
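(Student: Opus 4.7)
The plan is to show that for every $\be > \max\{0, h_X^s\}$ we have $\Eq_\be(\T_X) \neq \mt$, since then the infimum defining $h_X$ is bounded above by $\max\{0, h_X^s\}$.

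First I would reduce to the unital case via Proposition \ref{P:unital}. Any unit decomposition $\{x_1, \dots, x_d\}$ of $X$ is simultaneously a unit decomposition of $X^1$ over $A^1$, and since the inner product of $X^1$ extends that of $X$ one has $\sum_{|\mu| = k} \sca{x_\mu, x_\mu}^{X^1} = \sum_{|\mu| = k} \sca{x_\mu, x_\mu}^X$ with the same norm. Consequently $h_{X^1}^x = h_X^x$ for every such $x$, and taking infima gives $h_{X^1}^s = h_X^s$. Moreover $A^1$ always has at least one tracial state, namely the character $a + \la 1_{A^1} \mapsto \la$, so $\Tr(A^1) \neq \mt$ regardless of whether $A$ carries traces.

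Now fix $\be > \max\{0, h_X^s\} = \max\{0, h_{X^1}^s\}$. By Proposition \ref{P:hx}(iii) applied to $X^1$, we have $\Tr_\be(A^1) = \Tr(A^1)$, which is non-empty by the previous step. Pick any $\tau \in \Tr_\be(A^1)$; Theorem \ref{T:para} then produces a state $\Phi(\tau) \in \Eq_\be^{\fty}(\T_{X^1}) \subseteq \Eq_\be(\T_{X^1})$. By Proposition \ref{P:unital} its restriction to $\T_X$ is an element of $\Eq_\be(\T_X)$, so $\Eq_\be(\T_X) \neq \mt$. Taking the infimum over all such $\be$ yields $h_X \leq \max\{0, h_X^s\}$.

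The result is really just the combination of the entropy bound of Proposition \ref{P:hx}(iii) with the existence half of the parametrization Theorem \ref{T:para}. The only genuine subtlety is the possible lack of tracial states on $A$, which is precisely what the unitization trick in Proposition \ref{P:unital} was set up to resolve; without it the statement would need to be qualified by $\Tr(A) \neq \mt$.
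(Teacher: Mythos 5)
Your core argument is exactly the paper's: for $\be>\max\{0,h_X^s\}$, Proposition \ref{P:hx}(iii) gives $\Tr_\be(A)=\Tr(A)$, and Theorem \ref{T:para} then produces an element of $\Eq_\be^{\fty}(\T_X)$, so $h_X\leq\be$. That part is fine and is all the paper does.

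The extra step you add to cover $\Tr(A)=\mt$ does not work, and this is a genuine gap. Take $\tau=\chi$, the character $a+\la 1_{A^1}\mapsto\la$ on $A^1$ that you use to guarantee $\Tr(A^1)\neq\mt$. Since the inner product of $X^1$ takes values in $A$, we have $\chi(\sca{x_\mu,ax_\mu})=0$ for every nonempty $\mu$ and every $a\in A^1$, so $c_{\chi,\be}=1$ and $\Phi(\chi)$ is the state $f\mapsto\chi(p_0fp_0)$, which vanishes identically on $\pi(A)$ and hence (by the KMS condition and Cauchy--Schwarz) on all of $\T_X$. Its restriction to $\T_X$ is the zero functional, not a state, so it is not an element of $\Eq_\be(\T_X)$. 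The direction of Proposition \ref{P:unital} you invoke --- that a KMS state of $\T_{X^1}$ restricts to a KMS state of $\T_X$ --- is exactly the direction that fails here: $\T_X$ is an ideal of codimension one in $\T_{X^1}$, and a state of the unitization restricts to a state of the ideal only when it has norm one there; the "unitization of states" used in the proof of Proposition \ref{P:unital} only goes the other way. Indeed, when $\Tr(A)=\mt$ (e.g.\ $A=\O_2$ with $X={}_{\id}A$) every $\vphi\in\Eq_\be(\T_X)$ would restrict to a tracial state $\vphi\pi$ on $A$, so $\Eq_\be(\T_X)=\mt$ for all $\be$ and $h_X=\infty$: no unitization trick can rescue the inequality, which genuinely requires $\Tr(A)\neq\mt$ (an assumption the paper leaves implicit). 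Your proof is correct once you restrict to that case and drop the unitization detour, or replace it by extending a given $\tau\in\Tr(A)$ to $A^1$ rather than starting from an arbitrary trace on $A^1$.
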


\begin{proof}
If $\be > h_X^s$ then Proposition \ref{P:hx}(iii) implies that $\Tr_\be(A) = \Tr(A)$.
Therefore Theorem \ref{T:para} gives that $\Eq_\be(\T_X) \neq \mt$ and so $h_X \leq \be$.
\end{proof}

The gauge action of $\T_X$ is inherited by the $J$-relative Cuntz-Pimsner algebras.
Thus we can use the previous parametrization for their equilibrium states.
For convenience let us write here $(\rho, v) = (q_J \pi, q_J t)$ for the faithful representation of $\O(J, X)$ where $q_J \colon \T_X \to \O(J,X)$ is the canonical quotient map.
Hence $\ker q_J$ is the ideal generated by $\pi(a) p_0$ for all $a \in J$ when $p_0 \in \T_X$.
We will write simply $q$ when $J = A$.

\begin{theorem}\label{T:para rel}
Let $X$ be a C*-correspondence of finite rank over $A$ and let $\be > 0$.
Suppose that $J \subseteq \phi_X^{-1}(\K X)$.
Then there is a bijection 
\[
\Phi \colon \{\tau \in \Tr_\be(A) \mid \tau|_J = 0\} \to \Eq_\be^{\fty}(\O(J,X)).
\]
If $x = \{x_1, \dots, x_d\}$ is a decomposition of the unit then $\Phi$ is given by
\begin{equation}
\Phi(\tau)(v(\xi^{\otimes n}) v(\xi^{\otimes m})^*)
=
\de_{n,m} c_{\tau,\be}^{-1} \sum_{k=0}^\infty e^{-(k+n) \be} 
\sum_{|\mu| = k} \tau(\sca{\eta^{\otimes m} \otimes x_\mu, \xi^{\otimes n} \otimes x_\mu})
\end{equation}
for all $\xi^{\otimes n} \in X^{\otimes n}$ and $\eta^{\otimes m} \in X^{\otimes m}$.
Moreover $\Phi$ satisfies the convex combination of Corollary \ref{C:ext} and thus it preserves extreme points.
If, in addition, $\Eq_\be^{\fty}(\O(J,X))$ is weak*-closed then $\Phi$ is a weak*-homeomorphism between weak*-compact sets.
\end{theorem}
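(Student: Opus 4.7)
The plan is to reduce this to Theorem \ref{T:para} via the canonical quotient $q_J \colon \T_X \to \O(J,X)$. The gauge action descends, so every $\vphi \in \Eq_\be^{\fty}(\O(J,X))$ pulls back to $\vphi \circ q_J \in \Eq_\be^{\fty}(\T_X)$, and conversely a state in $\Eq_\be^{\fty}(\T_X)$ descends to $\O(J,X)$ exactly when it annihilates $\ker q_J$. The task is therefore to single out those $\tau \in \Tr_\be(A)$ for which the Toeplitz-level state $\Phi_{\T}(\tau)$ of Theorem \ref{T:para} vanishes on $\ker q_J$, and then to transport the structural content across $q_J$.

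The finite rank hypothesis makes $\ker q_J$ explicit. For $a \in J$ we have $\psi_t(\phi_X(a)) = \sum_i \pi(a) t(x_i) t(x_i)^*$, so by equation \eqref{eq:it id}
\[
\pi(a) - \psi_t(\phi_X(a))
= \pi(a) \Big( 1 - \sum_{i \in [d]} t(x_i) t(x_i)^* \Big)
= \pi(a) p_0,
\]
and $\ker q_J$ is the closed two-sided ideal of $\T_X$ generated by $\{\pi(a) p_0 \mid a \in J\}$. The key step is to prove that $\Phi_{\T}(\tau)$ vanishes on this ideal if and only if $\tau|_J = 0$. Necessity is a direct consequence of the identity $\Phi_{\T}(\tau)(\pi(a) p_0) = \Phi_{\T}(\tau)(p_0 \pi(a) p_0) = c_{\tau,\be}^{-1} \tau(a)$ established in the proof of Theorem \ref{T:para}, using $[p_0, \pi(A)] = 0$. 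For sufficiency I would expand a generator $f \pi(a) p_0 g$ of $\ker q_J$ in terms of the basic elements $t(\xi^{\otimes m_1}) t(\eta^{\otimes n_1})^* \pi(a) p_0 t(\zeta^{\otimes k_2}) t(\om^{\otimes l_2})^*$. Since $t(\eta^{\otimes n_1})^* p_0 = 0$ for $n_1 \geq 1$ and $p_0 t(\zeta^{\otimes k_2}) = 0$ for $k_2 \geq 1$, only the terms with $n_1 = k_2 = 0$ survive; and the gauge-invariance of $\Phi_{\T}(\tau)$ forces $m_1 = l_2 = n$. Each such surviving term collapses to $t(\xi^{\otimes n} c) p_0 t(\om^{\otimes n})^*$ with $c = \eta_0^* a \zeta_0 \in J$, and the KMS-condition applied with $p_0 \in \T_X^\ga$ then gives
\[
\Phi_{\T}(\tau)\big(t(\xi^{\otimes n} c) p_0 t(\om^{\otimes n})^*\big)
= e^{-n\be} c_{\tau,\be}^{-1}\, \tau\big(\sca{\om^{\otimes n}, \xi^{\otimes n}} c\big),
\]
which vanishes because $\sca{\om^{\otimes n}, \xi^{\otimes n}} c \in J$.

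With this characterization in hand, I would define $\Phi$ on $\O(J,X)$ by $\Phi(\tau) \circ q_J = \Phi_{\T}(\tau)$. Bijectivity onto the prescribed domain is immediate from Theorem \ref{T:para}; the explicit formula transfers verbatim since $v = q_J t$; and the convex-combination behaviour together with preservation of extreme points descends directly from Corollary \ref{C:ext}. The weak*-homeomorphism claim follows because the pullback $\vphi \mapsto \vphi \circ q_J$ is a weak*-homeomorphism onto its image, hence $\Phi$ inherits weak*-continuity and, when the range is compact, so does $\Phi^{-1}$. The main obstacle in this scheme is the sufficiency step above: passing from vanishing on the generators $\pi(a) p_0$ of $\ker q_J$ to vanishing on the whole ideal requires the careful elementary reduction using both that $p_0$ annihilates the positive-grade generators of $\T_X^\ga$ on one side and that $J$ is an ideal of $A$.
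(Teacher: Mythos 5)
Your proposal is correct and follows essentially the same route as the paper: pull back along the canonical quotient $q_J$, identify $\ker q_J$ as the ideal generated by $\pi(a)p_0$ for $a\in J$, and use the identity $\Phi_{\T}(\tau)(\pi(a)p_0)=c_{\tau,\be}^{-1}\tau(a)$ to characterize which $\tau\in\Tr_\be(A)$ descend. The only difference is that you spell out the sufficiency step (vanishing on the generators implies vanishing on the whole ideal) by an explicit expansion, where the paper simply asserts it via the identification $\ker q_J=\K(\F(X)J)$; your computation is a valid, more hands-on substitute.
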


\begin{proof}
Fix $q_J \colon \T_X \to \O(J, X)$ be the canonical $*$-epimorphism.
Then the $\ker q_J = \K(\F(X) J)$ is generated by $\pi(a)p_0$ for $a \in J$.
Let $\tau \in \Tr_\be(A)$ and fix $\vphi_\tau$ be the associated state in $\Eq_\be^{\fty}(\T_X)$ given by Theorem \ref{T:para}.
Then we get that $\vphi_\tau(\pi(a) p_0) = c_{\tau,\be}^{-1} \tau(a)$.
Therefore, if $\tau$ vanishes on $J$ then $\vphi_\tau$ vanishes on $\ker q_J$ and so it induces a state on $\O(J,X)$.
As the unital quotient map intertwines the gauge actions the induced state is in $\Eq_\be^{\fty}(\O(J, X))$.
Conversely if $\vphi \in \Eq_\be^{\fty}(\O(J, X))$ then $\bar\vphi q_J \in \Eq_\be^{\fty}(\T X)$ and it defines $\tau_{\bar\vphi} \in \Tr_\be(A)$ by Theorem \ref{T:para}.
By construction $\tau_{\bar\vphi}$ vanishes on $J$ as $q_J(p_0 \pi(a) p_0) = q_J(\pi(a) p_0) = 0$ for all $a \in J$.
\end{proof}

\section{The infinite part of the equilibrium states}

Let us now see how we can parametrize $\Eq_\be^\infty(\O(A,X))$ (and thus all $\Eq_\be^\infty(\O(J,X))$).
The main point here is that these states are given by extending tracial states on $A$ rather than by taking statistical approximations.
When $X$ is non-degenerate and injective then the existence of such a $\Psi$ can be derived by combining \cite[Theorem 2.1 and Theorem 2.5]{LacNes04}.
However the attack therein is essentially different, as the well definedness of the extension is verified by using perturbations of the action.
Following \cite[Theorem 3.18]{KajWat13} we can directly construct the extension within the fixed point algebra (and by keeping the same action).

\begin{theorem}\label{T:para 0}
Let $X$ be a C*-correspondence of finite rank over $A$ and let $\be > 0$.
Let 
\[
I := \{a \in A \mid \lim_n \nor{\phi_X(a) \otimes \id_{X^{\otimes n - 1}}} = 0\}.
\]
Then there is an affine weak*-homeomorphism
\[
\Psi \colon \{\tau \in \Avt_\be(A) \mid \tau|_I = 0\} \to \Eq_\be^\infty(\O(A,X)) \text{ such that } \Psi(\vphi)|_{\rho(A)} = \tau.
\]
In particular $\Psi$ induces an affine weak*-homeomorphism onto $\Eq_\be^\infty(\T_X)$.
\end{theorem}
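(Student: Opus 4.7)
The plan is to build $\Psi$ by exploiting that the gauge fixed-point algebra $\O(A,X)^\ga$ is the inductive limit of the images of the $\K X^{\otimes n}$ inside it, in the spirit of \cite[Theorem 3.18]{KajWat13}, with the ideal $I$ enforcing the descent in the non-injective case. After constructing a state on $\O(A,X)^\ga$, the extension to all of $\O(A,X)$ will be obtained by precomposing with the gauge conditional expectation, and the inverse of $\Psi$ will simply be the restriction $\vphi \mapsto \vphi \circ \rho$.

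For the restriction direction, I let $\vphi \in \Eq_\be^\infty(\O(A,X))$ and set $\tau := \vphi \circ \rho$. Since $\rho(A)$ sits in the $\si$-fixed subalgebra, $\tau$ is automatically a tracial state of $A$. The $A$-covariance combined with the finite-rank identity $1_X = \sum_{i \in [d]} \theta_{x_i, x_i}$ gives $\rho(a) = \sum_i \rho(a) v(x_i) v(x_i)^* = \sum_i v(\phi_X(a) x_i) v(x_i)^*$; applying $\vphi$ and invoking the KMS condition at $n = m = 1$ through Proposition \ref{P:char} to turn each summand into $e^{-\be} \vphi(v(x_i)^* v(\phi_X(a) x_i)) = e^{-\be} \tau(\sca{x_i, a x_i})$ yields $\tau(a) = e^{-\be} \sum_i \tau(\sca{x_i, a x_i})$, so $\tau \in \Avt_\be(A)$. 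Iterating the identity $n$ times gives $\rho(a) = \psi_{v^n}(\phi_n(a))$, whence $\nor{\rho(a)} \leq \nor{\phi_n(a)}$ for every $n$, and $\tau|_I = 0$ follows on letting $n \to \infty$.

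Conversely, given $\tau \in \Avt_\be(A)$ with $\tau|_I = 0$, I would define an induced positive tracial functional $\tau_n$ on $\K X^{\otimes n}$ by $\tau_n(\theta_{\xi^{\otimes n}, \eta^{\otimes n}}) := \tau(\sca{\eta^{\otimes n}, \xi^{\otimes n}})$ and positive-linear extension, in the sense of \cite{ComZet83, CunPed79, Ped69}. The hypothesis $\tau|_I = 0$ together with the trace property of $\tau$ is exactly what makes $\tau_n$ vanish on the kernel of $\psi_{v^n}$, so that each $\tau_n$ descends to the image $K_n$ of $\K X^{\otimes n}$ in $\O(A,X)^\ga$. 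Setting $\vphi_n := e^{-n\be}\tau_n$ on $K_n$, compatibility along the connecting maps $T \mapsto \sum_i T \otimes \theta_{x_i, x_i}$ amounts on rank-one generators to the averaging identity $\tau(\sca{\eta, \xi}) = e^{-\be} \sum_i \tau(\sca{x_i, \sca{\eta, \xi} x_i})$, which holds by hypothesis. The resulting positive functional $\tilde\vphi$ on $\O(A,X)^\ga$ is a state with $\tilde\vphi|_{\rho(A)} = \tau$, and I set $\vphi := \tilde\vphi \circ E$ for the faithful conditional expectation $E$ obtained by integrating the gauge action.

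Since $E$ annihilates $v(\xi^{\otimes n}) v(\eta^{\otimes m})^*$ for $n \neq m$, Proposition \ref{P:char} reduces KMS verification to the diagonal case, where both sides collapse by construction to $e^{-n\be} \tau(\sca{\eta^{\otimes n}, \xi^{\otimes n}})$; membership in $\Eq_\be^\infty$ is automatic because $q(p_0) = 0$ in $\O(A,X)$. Affineness of $\Psi$ is transparent from the linear dependence on $\tau$, and Proposition \ref{P:char} makes the restriction map injective, producing a bijection between weak*-compact convex sets whose continuity on the spanning elementary products upgrades it to a weak*-homeomorphism. The final clause then follows from Theorem \ref{T:con}(ii), which identifies $\Eq_\be^\infty(\T_X) \cong \Eq_\be^\infty(\O(A,X))$ via the canonical quotient. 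I expect the main obstacle to be the descent-extension step for the $\tau_n$: producing them as positive functionals on all of $\K X^{\otimes n}$ and checking that they kill the kernels of $\psi_{v^n}$ is where the full force of $\tau|_I = 0$ is consumed, and it is precisely this step that makes the introduction of $I$ necessary beyond the injective case of \cite{KajWat13}.
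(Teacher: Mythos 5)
Your proposal follows essentially the same route as the paper: identify $\Eq_\be^\infty$ with states factoring through $\O(A,X)$, obtain the averaging property and $\tau|_I=0$ from the KMS condition and the covariance identity $\rho(a)=\psi_{v^n}(\phi_n(a))$, build a state on the fixed-point algebra through the inductive limit of the $\K X^{\otimes n}$ with compatibility reducing to the averaging identity, extend by the gauge conditional expectation, and invert by restriction using Proposition \ref{P:char}. All of these steps match the paper's argument and are correct as you state them.

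The one place where you diverge is precisely the step you flag as the ``main obstacle'' and leave open: showing that the induced traces $\tau_n$ on $\K X^{\otimes n}$ annihilate $\ker\psi_{v^n}$ so that they descend to the images $K_n$ inside $\O(A,X)^\ga$. The paper sidesteps this entirely by a preliminary reduction: it sets $Y:=v(X)$ over $B:=\rho(A)\cong A/I$, checks that $Y$ is injective and of finite rank with $J_Y=B$, and invokes the Gauge-Invariant-Uniqueness Theorem to conclude $\O(A,X)=\O_Y$. After this one may assume $X$ is injective and $I=(0)$, the connecting maps $T\mapsto T\otimes\id_X$ are injective, and no descent is needed; the functionals are then written directly via the unit decomposition as $\vphi_n(\psi_{v,n}(k_n))=e^{-n\be}\sum_{|\mu|=n}\tau(\sca{x_\mu,k_nx_\mu})$, which makes positivity and normalization immediate. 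Your direct descent can be made to work -- one shows $\ker\psi_{v,n}|_{\K X^{\otimes n}}=\K(X^{\otimes n}I)$ (which itself is most easily seen through the same identification $\O(A,X)\cong\O_Y$), and then $\tau_n(\theta_{kx_\mu,x_\mu})=\tau(\sca{x_\mu,kx_\mu})$ with $\sca{x_\mu,kx_\mu}\in I$ gives the vanishing -- but as written this is asserted rather than proved, and it is the only point at which your argument is incomplete. If you adopt the paper's reduction to the injective quotient correspondence at the outset, the rest of your proof goes through verbatim.
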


\begin{proof}
By Theorem \ref{T:con}, $\Eq_\be^\infty(\T_X)$ consists exactly of the $(\si,\be)$-KMS states that factor through $\Eq_\be(\O(A,X))$.
The ideal $I$ is the kernel of $q|_{\pi(A)}$ for the canonical $*$-epimorphism $q \colon \T_X \to \O(A, X)$.
Suppose first that $X$ is not injective and fix $(\rho, v)$ such that $\O(A, X) = \ca(\rho, v)$.
Then $I = \ker \rho$ and we claim that $\O_X$ is canonically $*$-isomorphic to $\O_Y$ for $Y = v(X)$ and $B = \rho(A)$.
To this end first notice that $Y$ is injective and of finite rank so that $J_Y = B$.
Indeed the covariance gives that
\[
\rho(a) = \rho(a) \sum_{i \in [d]} v(x_i) v(x_i)^* \foral a \in A.
\] 
Hence if $a + I \in \ker \phi_Y$ then $\rho(a) = \rho(a) \sum_{i \in [d]} v(x_i) v(x_i)^* = 0$, so that $a \in \ker \rho = I$.
Secondly it is clear that $(\id_B, \id_Y)$ defines a $B$-covariant pair for $Y$ since for $b = \rho(a)$ we have
\[
\psi_{\id_Y}(\phi_Y(b)) = \rho(a) = \id_B(b).
\]
Moreover it inherits a gauge action and trivially $\id_B$ is injective on $B$.
Thus the Gauge-Invariant-Uniqueness-Theorem asserts that $\O_Y = \ca(\id_B, \id_v) = \ca(\rho, v)$.

Therefore without loss of generality we may assume that $X$ is injective so that $\O(A, X) = \O_X$ and $I = (0)$.
We have to produce a weak*-homeomorphism $\Psi \colon \Avt_\be(A) \to \Eq_\be^\infty(\O_X)$ such that $\Psi^{-1}(\vphi) = \vphi \rho$. 
Let $\vphi \in \Eq_\be^\infty(\O_X)$ and set $\tau := \vphi \rho \in \Tr(A)$.
Therefore the KMS-condition yields
\[
\tau(a)
= \vphi( \rho(a) \sum_{i \in [d]} v(x_i) v(x_i)^*) 
= e^{-\be} \sum_{i \in [d]} \vphi(v(x_i)^* \rho(a) v(x_i))
= e^{-\be} \sum_{i \in [d]} \tau( \sca{x_i, a x_i} )
\]
and thus $\tau \in \Avt_\be(A)$.
Now fix $\tau \in \Avt_\be(A)$ and we will construct a $\vphi_\tau \in \Eq_\be^\infty(\O(A,X))$.
To this end we use a well known construction, that goes as back as \cite{Pim97}.
Namely, when $X$ is injective and the left action is by compacts then the fixed point algebra $\O_X$ can be identified with the direct limit
\[
\xymatrix{
A \ar[r]^{\phi_X} & \K X \ar[r]^{\otimes \id_X} & \K X^{\otimes 2} \ar[r]^{\otimes \id_X} & \cdots
}
\]
where $[\otimes \id_X] (t) = t \otimes \id _X$.
In our case this identification is given by
\[
\theta_{\xi^{\otimes n}, \eta^{\otimes n}} \otimes \id_X
=
\sum_{i \in [d]} \theta_{\xi^{\otimes n} \otimes x_i, \eta^{\otimes n} \otimes x_i},
\]
as a direct computation on elementary tensors shows.
Therefore $\O_X^\ga$ is the inductive limit of the increasing sequence $\{(\O_X^\ga)_n\}_{n \in n\bN}$ for
\[
(\O_X^\ga)_n := \ol{\spn} \{ v(\xi^{\otimes n}) v(\eta^{\otimes n})^* \mid \xi^{\otimes n}, \eta^{\otimes n} \in X^{\otimes n}\} = \psi_{v, n}(\K X^{\otimes n})
\]
by writing
\[
v(\xi^{\otimes n}) v(\eta^{\otimes n})^* = \sum_{i \in [d]} v(\xi^{\otimes n}) v(x_i) v(x_i)^* v(\eta^{\otimes n})^*.
\]
We define the functionals $\vphi_n$ on $(\O_X^\ga)_n$ by
\[
\vphi_n(\psi_{v, n}(k_n)) := e^{-n \be} \sum_{|\mu| = n} \tau(\sca{x_\mu, k_n x_\mu}) \FOR k_n \in \K X^{\otimes n}.
\]
To see that it is well defined notice that for a positive $k_n$ we have
\[
e^{-n \be} \sum_{|\mu| = n} \tau(\sca{x_\mu, k_n x_\mu})
\leq
\nor{k_n} e^{-n \be} \sum_{|\mu| = n} \tau(\sca{x_\mu, x_\mu})
\leq
\nor{k_n}.
\]
In particular we have $\vphi_n(\rho(1_A))
=
e^{-n \be} \sum_{|\mu| = n} \tau(\sca{x_\mu, x_\mu})
=
1$
and so each $\vphi_n$ is a state.
By construction we have that
\begin{align*}
\vphi_n(\psi_{v,n}(\theta_{\xi^{\otimes n}, \eta^{\otimes n}}))
& =
e^{-n\be} \sum_{|\mu| = n} \tau(\sca{x_\mu, \xi^{\otimes n}} \sca{\eta^{\otimes n}, x_\mu}) \\
& =
e^{-n\be} \sum_{|\mu| = n} \tau(\sca{\eta^{\otimes n}, x_\mu} \sca{x_\mu, \xi^{\otimes n}}) 
 =
e^{-n \be} \tau(\sca{\eta^{\otimes n}, \xi^{\otimes n}}).
\end{align*}
We see that the collection $\{\vphi_n \mid n \in \bN\}$ is compatible with the direct limit structure since
\begin{align*}
\vphi_{n+1}(\psi_{v, n+1}(\theta_{\xi^{\otimes n}, \eta^{\otimes n}} \otimes \id_X))
& =
\sum_{i \in [d]} \vphi_{n+1}(v(\xi^{\otimes n}) v(x_i) v(x_i)^* v(\eta^{\otimes n})^*) \\
& =
e^{-(n+1) \be} \sum_{|\mu| = n+1} \sum_{i \in [d]} \tau (\sca{x_\mu, \xi^{\otimes n} \otimes x_i} \sca{ \eta^{\otimes n} \otimes x_i, x_\mu}) \\
& =
e^{-(n+1) \be} \sum_{i \in [d]} \sum_{|\mu| = n+1} \tau (\sca{\eta^{\otimes n} \otimes x_i, x_\mu} \sca{x_\mu, \xi^{\otimes n} \otimes x_i}) \\
& =
e^{-(n+1) \be} \sum_{i \in [d]} \tau (\sca{x_i, \sca{\eta^{\otimes n}, \xi^{\otimes n}} x_i}) \\
& =
e^{-n \be} \tau (\sca{\eta^{\otimes n}, \xi^{\otimes n}}) 
 = 
\vphi_n(\psi_{v,n}(\theta_{\xi^{\otimes n}, \eta^{\otimes n}})).
\end{align*}
Therefore it defines a state $\vphi_\tau$ in the limit which extends $\tau$ such that
\[
\vphi_\tau(v(\xi^{\otimes n}) v(\eta^{\otimes n})^*)
=
\vphi_n(v(\xi^{\otimes n}) v(\eta^{\otimes n})^*)
= 
e^{-n\be} \tau(\sca{\eta^{\otimes n}, \xi^{\otimes n}}).
\]
Let $E \colon \O_X \to \O_X^\ga$ be the conditional expectation coming from the gauge action.
Then Proposition \ref{P:char} yields that the induced state $\vphi_\tau E$ is a $(\si,\be)$-KMS state on $\O_X$.
The same proposition implies that $\vphi_\tau E$ is the unique $(\si,\be)$-KMS state with restriction $\tau$ on $A$.
Therefore $\Psi$ is injective.

It is immediate that $\Psi^{-1}$ is weak*-continuous and affine.
Since $\Eq_\be^\infty(\O(A,X))$ is weak*-compact and $\Avt_\be(A)$ is Hausdorff, it follows that $\Psi$ is a weak*-homeomorphism.
\end{proof}

The same method applies to parametrize the KMS-states at $\be=0$, i.e., the gauge-invariant tracial states on $\O(A,X)$.
However we cannot have arbitrarily large $\be>0$ for $\O(A,X)$.
Obviously $q(p_0) = 0$ and so $\Eq_\be(\O(A,X)) = \Eq_\be^\infty(\O(A,X))$.
Therefore Theorem \ref{T:para rel} is void for $\O(A,X)$ at $\be \leq h_X^s$; and there is a good reason for this.

\begin{proposition}\label{P:not bigger}
If $X$ is a C*-correspondence of finite rank over $A$ then $\Eq_\be(\O(A,X)) = \mt$ for all $\be > h_X^s$.
If, in addition, $A$ is abelian and $h_X^s >0$ then $\O(A,X)$ attains equilibrium states at $h_X^s$.
\end{proposition}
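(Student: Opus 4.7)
For the first assertion, the plan is to lift equilibrium states of $\O(A,X)$ back to $\T_X$ and invoke the entropy obstruction. The canonical $*$-epimorphism $q \colon \T_X \to \O(A,X)$ satisfies $q(p_0) = 0$, because in $\O(A,X)$ the relation $1 = \psi_t(\phi_X(1_A)) = \sum_{i \in [d]} t(x_i) t(x_i)^*$ forces $q(1 - \sum_{i \in [d]} t(x_i) t(x_i)^*) = 0$. Hence every $\vphi \in \Eq_\be(\O(A,X))$ yields $\vphi \circ q \in \Eq_\be(\T_X)$ with $(\vphi \circ q)(p_0) = 0$, that is $\vphi \circ q \in \Eq_\be^\infty(\T_X)$. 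By Proposition \ref{P:hx}(iv), the latter set is empty whenever $\be > h_X^s$, so $\Eq_\be(\O(A,X)) = \mt$.

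For the second assertion, assume $A$ is abelian and $h_X^s > 0$. Proposition \ref{P:hx}(v) produces a tracial state $\tau \in \Avt_{h_X^s}(A)$, and I plan to apply Theorem \ref{T:para 0} to transport $\tau$ to an element of $\Eq_{h_X^s}^\infty(\O(A,X)) \subseteq \Eq_{h_X^s}(\O(A,X))$. The only hypothesis to check is that $\tau|_I = 0$, where $I = \{a \in A \mid \lim_n \nor{\phi_X(a) \otimes \id_{X^{\otimes n-1}}} = 0\}$. The key remark is that this vanishing is automatic for \emph{any} $\tau \in \Avt_\be(A)$, regardless of $A$ being abelian. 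Indeed, iterating the averaging identity gives
\[
\tau(a) = e^{-k\be} \sum_{|\mu|=k} \tau(\sca{x_\mu, a x_\mu}) \qforal a \in A,\ k \in \bN,
\]
and specializing to $a = 1_A$ yields $\sum_{|\mu|=k} \tau(\sca{x_\mu, x_\mu}) = e^{k\be}$. For a positive $a \in I$, the estimate $\sca{x_\mu, a x_\mu} = \sca{x_\mu, (\phi_X(a) \otimes \id_{X^{\otimes k-1}}) x_\mu} \leq \nor{\phi_X(a) \otimes \id_{X^{\otimes k-1}}} \sca{x_\mu, x_\mu}$ then gives $\tau(a) \leq \nor{\phi_X(a) \otimes \id_{X^{\otimes k-1}}} \to 0$, whence $\tau|_I = 0$ by positivity and linearity.

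The main thing to be careful about is purely bookkeeping: the iterated averaging identity and the identification $q(p_0) = 0$ presuppose the unital setting, so one first invokes Proposition \ref{P:unital} to reduce to that case. Once this reduction is in place, the proposition is a clean packaging of Proposition \ref{P:hx}(iv)--(v) together with the parametrization of Theorem \ref{T:para 0}.
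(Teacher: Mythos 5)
Your proof is correct and follows essentially the same route as the paper: lift states through $q$ to land in $\Eq_\be^\infty(\T_X)$ and apply Proposition \ref{P:hx}(iv) for the first claim, then combine Proposition \ref{P:hx}(v) with Theorem \ref{T:para 0} for the second. Your explicit verification that $\tau|_I = 0$ holds automatically for every $\tau \in \Avt_\be(A)$ (via the iterated averaging identity and the estimate $\tau(a) \leq \nor{\phi_X(a) \otimes \id_{X^{\otimes k-1}}}$) is a detail the paper leaves implicit when invoking Theorem \ref{T:para 0}, and it is a worthwhile addition.
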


\begin{proof}
Since $\vphi q \in \Eq_\be^\infty(\T_X)$ for $\vphi \in \Eq_\be(\O(A,X))$, Proposition \ref{P:hx} yields $\be \leq h_X^s$.
The same proposition and Theorem \ref{T:para 0} gives the second part of the statement.
\end{proof}

\begin{corollary}\label{C:eq ent}
Let $X$ be a C*-correspondence of finite rank over $A$.
Then:
\begin{enumerate}[leftmargin=30pt, itemindent=0pt, itemsep=1pt]
\item $h_X = \max \big\{0, \inf \{ h_X^\tau \mid \tau \in \Tr(A)\} \big\}$.
\item If $0 < h_X$, or if $0 \leq h_\tau$ for all $\tau \in \Tr(A)$, then $h_X = \min \{ h_X^\tau \mid \tau \in \Tr(A)\}$.
\item If $h_X^\tau = h_X^s$ for all $\tau \in \Tr(A)$ then $\Eq_\be^\infty(\T_X) = \mt$ for all $\be > h_X$.
\end{enumerate}
\end{corollary}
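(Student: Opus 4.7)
Write $h^{*} := \inf\{h_X^\tau \mid \tau \in \Tr(A)\}$ and $S_k(\tau) := \sum_{|\mu| = k}\tau(\sca{x_\mu, x_\mu})$. The engine of the whole proof is the root test for the series $c_{\tau,\be} = \sum_k e^{-k\be} S_k(\tau)$ of Definition~\ref{D:ctau}: since $\limsup_k (e^{-k\be} S_k(\tau))^{1/k} = e^{h_X^\tau - \be}$, the condition $\be > h_X^\tau$ forces $\tau \in \Tr_\be(A)$, while $\be < h_X^\tau$ makes the general term of $c_{\tau,\be}$ fail to vanish. Combined with the observation following the definition of $h_X^\tau$ that every $\tau \in \Avt_\be(A)$ satisfies $h_X^\tau = \be$, this is essentially all that is needed.

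For (i), the upper bound is immediate from Theorem~\ref{T:para}: for any $\be > \max\{0, h^{*}\}$, pick $\tau$ with $h_X^\tau < \be$; then $\tau \in \Tr_\be(A)$, so $\Phi(\tau) \in \Eq_\be^{\fty}(\T_X)$ and hence $h_X \leq \be$. For the reverse inequality, when $\be < h^{*}$ every $\tau \in \Tr(A)$ satisfies $h_X^\tau > \be$, so $\Tr_\be(A) = \mt$, and $\Avt_\be(A) = \mt$ since averaging forces $h_X^\tau = \be$; Theorems~\ref{T:para}, \ref{T:para 0} and the Wold decomposition (Theorem~\ref{T:con}) together empty $\Eq_\be(\T_X)$. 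Combined with the tautology $h_X \geq 0$ this yields $h_X = \max\{0, h^{*}\}$.

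For (ii), I would first observe that $\{\be > 0 \mid \Eq_\be(\T_X) \neq \mt\}$ is closed in $(0,\infty)$, since a weak*-subnet limit of $(\si, \be_n)$-KMS states at $\be_n \to \be > 0$ is $(\si, \be)$-KMS by Proposition~\ref{P:char} and the continuity of $\be \mapsto e^{-n\be}$. Hence when $h_X > 0$ we have $\Eq_{h_X}(\T_X) \neq \mt$; its Wold decomposition combined with Theorem~\ref{T:para} or Theorem~\ref{T:para 0} produces $\tau$ with $h_X^\tau \leq h_X$, and the lower bound from~(i) forces equality. The subtler case is $h_X = 0$ with $h_X^\tau \geq 0$ for every $\tau$. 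Here I would choose $\be_n \downarrow 0$ and $\vphi_n \in \Eq_{\be_n}(\T_X)$, extend to $\T_{X^1}$ via Proposition~\ref{P:unital}, and pass to a weak*-subnet limit $\vphi$ --- a trace on $\T_{X^1}$, because the KMS condition degenerates at $\be = 0$. Setting $\tau := \vphi|_{\pi(A)}$, the trace property together with the telescoping identity $\sum_{|\mu|=k} t(x_\mu) t(x_\mu)^* = 1 - \sum_{i<k} p_i$ (obtained by iterating~(\ref{eq:it id})) gives
\[
S_k(\tau) \;=\; \vphi\Bigl(\sum_{|\mu|=k} t(x_\mu)^* t(x_\mu)\Bigr) \;=\; \vphi\Bigl(\sum_{|\mu|=k} t(x_\mu) t(x_\mu)^*\Bigr) \;=\; 1 - \sum_{i<k} \vphi(p_i) \;\leq\; 1,
\]
forcing $h_X^\tau \leq 0$, and with the hypothesis $h_X^\tau \geq 0$ we conclude $h_X^\tau = 0 = h_X$. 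The main obstacle I want to flag is that $\tau \mapsto h_X^\tau$ is not obviously lower semicontinuous, so one cannot simply take weak*-limits of near-minimizing $\tau_n$; the point of the argument above is that the trace identity on $\vphi$ bounds $S_k(\tau)$ uniformly in $k$ and sidesteps semicontinuity altogether.

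For (iii), assume $h_X^\tau = h_X^s$ for every $\tau \in \Tr(A)$ (the case $\Tr(A) = \mt$ being vacuous since the parametrizing set of Theorem~\ref{T:para 0} is then empty). Suppose for contradiction that $\vphi \in \Eq_\be^\infty(\T_X)$ for some $\be > h_X$. Theorem~\ref{T:para 0} attaches $\tau \in \Avt_\be(A)$ with $h_X^\tau = \be$; by hypothesis $\be = h_X^s$, whereas~(i) gives $h_X = \max\{0, h_X^s\} \geq h_X^s = \be$, contradicting $\be > h_X$.
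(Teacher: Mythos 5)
Your proof is correct and follows essentially the same route as the paper's: the root test on $c_{\tau,\be}$ combined with the Wold decomposition and the two parametrizations for (i); weak*-compactness at $\be = h_X$ in the positive case and, in the case $h_X=0$, a trace obtained as a $\be\downarrow 0$ limit of KMS states whose restriction satisfies $S_k(\tau)\leq 1$ for all $k$, for (ii); and the emptiness of the infinite part above $h_X^s$ (the content of Proposition \ref{P:not bigger}, which you re-derive via Theorem \ref{T:para 0}) for (iii). The only differences are organizational — in particular you make explicit the limiting argument producing the tracial state in Case (b), which the paper leaves implicit.
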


\begin{proof}
For item (i) let $\be \geq h_X$ so that $\Eq_\be(\T_X) \neq \mt$.
Due to the decomposition and the parametrization we get that $\Tr_\be(A) \neq \mt$ or $\Avt_\be(A) \neq \mt$.
In any case there is a $\tau \in \Tr(A)$ such that $h_X^\tau \leq \be$.
Therefore
\[
\inf\{ h_X^\tau \mid \tau \in \Tr(A)\} \leq h_X.
\]
Suppose there were a $\tau \in \Tr(A)$ such that $h_X^\tau < h_X$.
If $h_X^\tau < 0$ then it is clear that $\tau \in \Tr_\be(A)$ for all $\be > 0$ in which case $h_X = 0$.
If $h_X^\tau >0$ then choose $\be \in (h_X^\tau, h_X)$.
Then the root test gives that $c_{\tau,\be} < \infty$ and thus the contradiction $\Eq_\be^{\fty}(\T_X) \neq \mt$.
For item (ii), weak*-compactness gives that $\Eq_{h_X}(\T_X) \neq \mt$.
We consider two possible cases for $h_X$:

\smallskip

\noindent
{\bf Case (a).}
If $h_X > 0$ then item (i) implies that $h_X = \inf \{ h_X^\tau \mid \tau \in \Tr(A)\}$.
Now we can decompose a $\vphi \in \Eq_{h_X}(\T_X)$ and use the parametrization of each component to get a $\tau_0 \in \Tr_{h_X}(A) \cup \Avt_{h_X}(A)$ with $0 \leq h_X^{\tau_0} \leq h_X$.
However by item (i) we have that $h_X^{\tau_0} \geq h_X$ and thus we have equality, i.e., a minimum at $h_X^{\tau_0}$. 

\smallskip

\noindent
{\bf Case (b).}
If $h_X = 0$ but $h_X^\tau \geq 0$ for all $\tau \in \Tr(A)$,  then $\T_X$ admits a tracial state $\vphi$ such that
\[
\sum_{|\mu| = k} \vphi \pi (\sca{x_\mu, x_\mu}) = \vphi(\sum_{|\mu| = k} t(x_\mu) t(x_\mu)^*) \leq \vphi(\pi(1_A)) = 1.
\]
As this holds for all $k  \in \bZ_+$ we have that $0 \leq h_X^{\tau_0} \leq \log 1$ and so $h_X^{\tau_0} = 0 = h_X$ for $\tau_0 := \vphi \pi \in \Tr(A)$.
The third item follows by Proposition \ref{P:not bigger}.
\end{proof}

\section{Comments and applications}\label{S:ex}

\subsection{Unit decompositions}\label{Ss:dif s ent}

The strong entropy requires taking the infimum over all possible unit decompositions.
This is because the notion of basis is not well defined for C*-correspondences over non-commutative C*-algebras.
Let us give such an example here.

\begin{example}
Let $A = C(K) \oplus \O_2$ for a compact and Hausdorff space $K$ and the Cuntz algebra $\O_2 = \ca(s_1, s_2)$.
Let $\al \in \End(A)$ be given by $\al(a,b) = (a, s_1 b s_1^* + s_2 b s_2^*)$ and let $X$ be the induced C*-correspondence ${}_\al A$.
That is $X = A$ as a vector space and
\[
\sca{\xi, \eta} = \xi^* \eta \qand (a,b) \cdot \xi \cdot (c,d) = \al(a,b) \xi (c,d).
\]
We chose $A$ to have a commutative part so that $\Tr(A) \neq \mt$.
In \cite{Kak14b} it is shown that $\mt \neq \Eq_\be(\T_X) = \Eq_\be^{\fty}(\T_X)$ for all $\be \in (0, \infty)$.

Now ${}_\al A$ admits at least two unit decompositions $x = \{(1,1)\}$ and $y = \{(1, s_1), (1, s_2)\}$.
It is clear that $\sum_{|\mu| = k} \sca{x_\mu, x_\mu} = 1$.
On the other hand we have that $y_1 = (1, s_1)$ and $y_2 = (1, s_2)$ are orthonormal and so $\sca{y_\mu, y_\mu} = (1,1)$ for all $\mu \in \bF_+^2$.
We then see that they have different entropies as
\[
h_X^x = \limsup_k \frac{1}{k} \log \nor{\sum_{|\mu| = k} \sca{x_\mu, x_\mu}}
=
0 < \log 2
=
\limsup_k \frac{1}{k} \log \nor{\sum_{|\mu| = k} \sca{y_\mu, y_\mu}}
=
h_X^y.
\]
\end{example}

\subsection{Orthogonal bases}\label{Ss:ob}

In several examples, the C*-correspondence is over an abelian $A$ and admits a finite orthonormal basis.
From our analysis, and in particular from Corollary \ref{C:eq ent}, we get directly the KMS-structure in these cases:
\begin{enumerate}[leftmargin=30pt, itemindent=0pt, itemsep=1pt]
\item $h_X^\tau = \log d$ for all $\tau \in \Tr(A)$ and $h_X = h_X^s  = \log d$;
\item $\Eq_\be(\T_X) = \Eq_\be^{\fty}(\T_X) \neq \mt$ for all $\be > \log d$, and $\Eq_{\log d}(\T_X) = \Eq_{\log d}^\infty(\T_X) \neq \mt$.
\end{enumerate}
Indeed suppose that $X$ admits a finite orthonormal basis $x = \{x_1, \dots, x_d\}$, i.e., $\sca{x_i, x_j} = \de_{i,j}$.
Then $\sca{x_\mu, x_\nu} = \de_{\mu, \nu}$ when $|\mu| = |\nu|$, so that
\[
h_X^\tau = h_X^s = \log d \foral \tau \in \Tr(A).
\]
Corollary \ref{C:eq ent} yields that $h_X = \log d$ and thus $\Eq_\be(\T_X) = \Eq_\be^{\fty}(\T_X)$ for all $\be > \log d$.
Moreover we see that $c_{\tau, \log d} = \sum_{k=0}^\infty 1$ for all $\tau \in \Tr(A)$, so that
\[
\Eq_{\log d}(\T_X) = \Eq_{\log d}^\infty(\T_X).
\]
As we noted in Proposition \ref{P:not bigger} we have that $\Eq_{\log d}^\infty(\T_X) \neq \mt$.
As applications we get the full KMS-structure for the Pimsner algebras:
\begin{enumerate}[leftmargin=30pt, itemindent=0pt, itemsep=1pt]
\item[(a)] In \cite{Kak14b}, by applying for $d$ the multiplicity of the dynamical system;
\item[(b)] In \cite{LRR11}, by applying for $d = |\det A|$ and using \cite[Lemma 2.6]{EHR11};
\item[(c)] In \cite{LRRW13}, by applying for $d = |X|$ and using \cite[Equation 3.1]{LRRW13}.
\end{enumerate}
We will see below that Corollary \ref{C:eq ent} gives also the KMS-structure of \cite{HLRS13, KajWat13} for C*-algebras of irreducible graphs.
Notice that in addition to that we provide a clear parametrization of all the equilibrium states at the critical temperature $\be = \log d$.

\subsection{Irreducible graphs}\label{Ss:gr}

C*-corres\-pon\-dences of finite graphs is the first step away from orthonormal bases.
Their KMS-structure gives a nice mixing of cases to distinguish between entropies, i.e.,
\begin{enumerate}[leftmargin=30pt, itemindent=0pt, itemsep=1pt]
\item It may be the case that $h_X < h_X^s$, and in particular $\Eq_\be^{\fty}(\T_X) \neq \mt$ for every $\be \in (h_X, h_X^s)$.
\item It may be the case that $\Eq_\be^{\fty}(\T_X) \neq \mt$ and $\Eq_\be^\infty(\T_X) \neq \mt$ for some $\be > 0$.
\item It may be the case that $\Eq_\be^{\fty}(\O_X) \neq \mt$ at every $\be > 0$.
\end{enumerate}
The equilibrium states of this category have been extensively investigated in \cite{HLRS13, HLRS15, KajWat13}.
Pimsner algebras of irreducible graphs had been considered in \cite{HLRS13} were the Perron-Fr\"{o}benius Theorem is used in an essential way.
We will see here how the entropy theory we have developed applies and recovers the results therein.

To fix notation, the C*-correspondence $X \equiv X_G$ of a graph $G = (G^{(0)}, G^{(1)}, s, r)$ is the linear span of $\{x_e \mid e \in G^{(1)}\}$ over the abelian C*-algebra generated by orthogonal projections $\{p_v \mid v \in G^{(0)}\}$ such that
\[
\sca{x_e, x_f} = \de_{e,f} p_{s(f)} \quad p_v x_e = \de_{v, r(e)} x_e \quad x_e p_v = \de_{s(e), v} x_e.
\]
Then $X$ admits the unit decomposition given by the basis $\{x_e \mid e \in G^{(1)}\}$.
We will write
\[
\{p_v \mid v \in \G^{(0)}\}
\qand
\{L_e \mid e \in G^{(1)}\}
\]
for the induced Toeplitz-Cuntz-Krieger family, and
\[
\{p_v \mid v \in G^{(0)}\}
\qand
\{S_e \mid e \in G^{(1)}\}
\]
for the induced Cuntz-Krieger family.
We use the same symbol for the projections as they have an injective copy in $\T_X$ and $\O_X = \ca(G)$.

In what follows we write $G$ for both the graph and its adjacency matrix with the understanding that $G_{ij}$ denotes the edges from $i$ to $j$.
We will be loose in differentiating between the entry $i$ and the vertex $v_i$ corresponding to it.
Let us start with an auxiliary proposition.

\begin{proposition}\label{P:irr graph}
Let $G$ be a finite non-zero irreducible graph and $p > 0$.
Then for every $v_{r} \in G$ we have
\[
\limsup_k \frac{1}{k} \log(p \sum_{j \in G} (G^k)_{r j}) = \log \la_G,
\]
where $\la_G$ is the Perron-Fr\"{o}benius eigenvalue of $G$.
\end{proposition}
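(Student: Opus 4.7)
The plan is to reduce the limsup to a genuine limit via the Perron-Frobenius theorem. Note first that the positive constant $p$ contributes only $(\log p)/k \to 0$ and so may be dropped. Writing $u = (1, \ldots, 1)^t \in \bR^{|G^{(0)}|}$, the quantity $\sum_{j \in G} (G^k)_{rj}$ is precisely the $r$-th coordinate of the vector $G^k u$.

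Since $G$ is finite, non-zero and irreducible, the Perron-Frobenius theorem produces a strictly positive right eigenvector $v = (v_1, \ldots, v_n)^t$ with $Gv = \la_G v$ and $\la_G > 0$. Strict positivity of $v$ on a finite index set gives constants
\[
m := \min_i v_i > 0 \qand M := \max_i v_i > 0,
\]
so that componentwise $m \cdot u \leq v \leq M \cdot u$. Applying the non-negative matrix $G^k$ preserves these inequalities, hence
\[
m \cdot G^k u \leq G^k v = \la_G^k v \leq M \cdot G^k u.
\]
Reading off the $r$-th coordinate and using $v_r > 0$ we get
\[
M^{-1} \la_G^k v_r \leq (G^k u)_r \leq m^{-1} \la_G^k v_r.
\]

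Taking logarithms, dividing by $k$, and passing to the limit, both sides yield $\log \la_G$ since $\log v_r$, $\log m$ and $\log M$ are constants. Therefore
\[
\lim_k \frac{1}{k} \log \Big(p \sum_{j \in G} (G^k)_{rj} \Big) = \log \la_G,
\]
which is in particular the $\limsup$ of the statement. The only step that requires care is the assertion that the Perron-Frobenius eigenvector $v$ is strictly positive on every coordinate, but this is exactly the payoff of irreducibility in the Perron-Frobenius theorem. The $p$ factor and the fact that we actually obtain a limit (not just a $\limsup$) are free byproducts of the argument.
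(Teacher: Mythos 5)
Your proof is correct and takes essentially the same route as the paper: both arguments sandwich $\sum_{j \in G}(G^k)_{rj}$ using the strictly positive Perron--Frobenius eigenvector, and your lower bound $(G^k u)_r \geq M^{-1}\la_G^k v_r$ is literally the paper's computation with $M=\max_i v_i$ playing the role of $w_1$. The only difference is cosmetic: for the upper bound the paper compares with the full sum $\sum_{i,j\in G}(G^k)_{ij}$ and cites its known asymptotics, whereas you derive it directly from $m\cdot u \leq v$, which has the minor added benefit of exhibiting the $\limsup$ as a genuine limit in a self-contained way.
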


\begin{proof}
Suppose that $G$ has $n$ vertices and let $w = [w_1, \dots, w_n]$ be the positive Perron-Fr\"{o}benius eigenvector corresponding to $\la_G$.
Without loss of generality assume that $w_1 = \max\{w_1, \dots, w_n\}$ and compute
\begin{align*}
p \sum_{j \in G} (G^k)_{rj}
& \geq
\frac{p}{w_1} \sum_{j \in G} (G^k)_{r j} w_j
=
\frac{p}{w_1} \la_G^k w_{r}
=
\frac{p \cdot w_{r}}{w_1} \la_G^k.
\end{align*}
On the other hand we have that
\[
p \sum_{j \in G} (G^k)_{r j}
\leq
p \sum_{i, j \in G} (G^k)_{ij}.
\]
Therefore
\begin{align*}
\log \la_G 
&= \limsup_k \frac{1}{k} \log(\frac{p \cdot w_{r}}{w_1} \la_G^k)
\leq
\limsup_k \frac{1}{k} \log(p \sum_{j \in G} (G^k)_{r j}) \\
& \leq
\limsup_k \frac{1}{k} \log(p \sum_{i,j \in G} (G^k)_{ij})
=
\limsup_k \frac{1}{k} \log(\sum_{i,j \in G} (G^k)_{ij})
=
\log \la_G
\end{align*}
where the last equality follows from the Perron-Fr\"{o}benius Theorem.
\end{proof}

\begin{theorem}\label{T:HLRS13} \cite{HLRS13}
Let $X$ be the C*-correspondence associated to a finite non-zero irreducible graph $G$.
Let $\la_G$ be the Perron-Fr\"{o}benius eigenvalue of $G$ (and thus of its transpose $G^t$).
Then:
\begin{enumerate}[leftmargin=30pt, itemindent=0pt, itemsep=1pt]
\item $h_X = \log \la_G$;
\item $\Eq_\be(\T_X) = \Eq_\be^{\fty}(\T_X)$ for all $\be > h_X$;
\item $\Eq_{h_X}(\T_X) = \Eq_{h_X}^\infty(\T_X)$ is a singleton on the Perron-Fr\"{o}benius eigenvector of $G^t$.
\end{enumerate}
\end{theorem}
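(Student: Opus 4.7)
The plan is to extract each item from the entropy framework (Corollary \ref{C:eq ent}, Theorems \ref{T:para} and \ref{T:para 0}) fed by the Perron--Fr\"obenius asymptotics of Proposition \ref{P:irr graph}. Since the diagonal $A = \bC^{|G^{(0)}|}$ is commutative, every $\tau \in \Tr(A)$ is a probability vector $\tau = \sum_r t_r p_{v_r}$; and with the convention $\langle x_\mu, x_\mu\rangle = p_{s(\mu_1)}$ both quantities that drive the entropy computation reduce to row sums of powers of $G$:
\[
\sum_{|\mu|=k} \tau(\langle x_\mu, x_\mu\rangle) = \sum_r t_r \sum_j (G^k)_{rj}, \qquad
\Big\| \sum_{|\mu|=k} \langle x_\mu, x_\mu\rangle \Big\|_A = \max_r \sum_j (G^k)_{rj}.
\]

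First I would apply Proposition \ref{P:irr graph} to conclude that $h_X^\tau = \log \la_G$ for every $\tau \in \Tr(A)$, and to deduce $h_X^s = \log \la_G$ by applying the same asymptotic to a single row (using that the maximum over finitely many rows grows at the Perron rate). Since $h_X^\tau$ is constant on $\Tr(A)$, Corollary \ref{C:eq ent}(i) immediately yields $h_X = \log \la_G$, proving item (i); and Corollary \ref{C:eq ent}(iii) then gives $\Eq_\be^\infty(\T_X) = \mt$ for every $\be > h_X$, which combined with the Wold decomposition (Theorem \ref{T:con}) is item (ii).

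For item (iii) I would handle the critical level $\be = \log \la_G$ in two steps. For the finite part, the lower bound $\sum_j (G^k)_{rj} \geq c_r \la_G^k$ for a positive constant $c_r$ (extracted from the proof of Proposition \ref{P:irr graph}) shows that $c_{\tau,\log \la_G}$ has terms bounded below by $t_r c_r > 0$ whenever $t_r > 0$, hence diverges; so $\Tr_{\log \la_G}(A) = \mt$ and Theorem \ref{T:para} forces $\Eq_{h_X}^{\fty}(\T_X) = \mt$. For the infinite part, a direct computation gives $\sum_{e \in G^{(1)}} \langle x_e, p_v x_e\rangle = \sum_{e: r(e) = v} p_{s(e)}$, so the averaging condition at $a = p_v$ reads $\la_G t_v = (G^t t)_v$; that is, $t$ is a positive $\la_G$-eigenvector of $G^t$. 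The Perron--Fr\"obenius Theorem furnishes a unique normalized positive such vector, which moreover has full support. Irreducibility also ensures that $\phi_n(p_v) \in \L(X^{\otimes n})$ is a non-zero projection for infinitely many $n$, so the ideal $I$ of Theorem \ref{T:para 0} vanishes, the compatibility condition $\tau|_I = 0$ is automatic, and the parametrization yields a unique element of $\Eq_{h_X}^\infty(\T_X)$.

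The main technical ingredient is the spectral asymptotic $\sum_j (G^k)_{rj} \asymp \la_G^k$ supplied by Proposition \ref{P:irr graph}; everything else is bookkeeping. I expect the principal pitfall to be the source/range convention for paths and the orientation of the adjacency matrix, as it dictates whether $G$ or $G^t$ appears in the averaging equation and hence which eigenvector is selected. Once the convention is aligned, items (i)--(iii) follow without further work.
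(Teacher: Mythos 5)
Your proposal follows essentially the same route as the paper's proof: Proposition \ref{P:irr graph} pins all tracial entropies (and $h_X^s$) at $\log \la_G$, Corollary \ref{C:eq ent} then yields items (i) and (ii), divergence of $c_{\tau,\log\la_G}$ empties the finite part at the critical temperature, and the averaging condition identifies $\Avt_{\log\la_G}(A)$ with the $\ell^1$-normalized Perron--Fr\"{o}benius eigenvector of $G^t$. The only differences are harmless: you explicitly verify $I=(0)$, which the paper leaves implicit, and you should note that the lower bound $h_X^s \geq \log\la_G$ comes from $h_X^\tau \leq h_X^s$ (Proposition \ref{P:hx}(ii)) rather than from the single-row asymptotic alone.
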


\begin{proof}
Let $\tau \in \Tr(A)$.
Then by definition we have that
\[
h_X^s \leq \limsup_k \frac{1}{k} \log \big( \sum_{|\mu|=k} \|\sca{x_{\mu}, x_{\mu}}\| \big) = \limsup_k \frac{1}{k} \log( \sum_{i,j \in G} (G^k)_{ij})
=
\log \la_G,
\]
by the Perron-Fr\"{o}benius Theorem.
On the other hand set
\[
P := \diag\{p_{i} \mid i \in G\} \textup{ where } p_i := \tau(p_{v_i}).
\]
That is $p_i$ is the evaluation of $\tau$ at the projection corresponding to $v_i$.
Thus $P$ is a diagonal matrix whose entries $\{p_i \mid i \in G\}$ sum up to one.
Let a $v_r \in G$ such that $p_{r} \neq 0$.
Let $w = [w_1, \dots, w_n]$ be the Perron-Fr\"{o}benius eigenvector and suppose without loss of generality that $w_1 = \max\{w_1, \dots, w_n\}$.
As in Proposition \ref{P:irr graph} we get
\begin{align*}
\sum_{|\mu| = k} \tau(\sca{x_\mu, x_\mu})
 =
\sum_{i,j \in G} (P G^k)_{ij}
\geq
p_{r} \sum_{j \in G} (G^k)_{r j} 
 \geq
\frac{p_{r}}{w_1} \sum_{j \in G} (G^k)_{r j} w_j
=
\frac{p_{r} \cdot w_{r}}{w_1} \la_G^k.
\end{align*}
Therefore $h_X^\tau \geq \log \la_G$ and so
\begin{align*}
\log \la_G \leq h_X^\tau \leq h_X^s \leq \log \la_G.
\end{align*}
Then Corollary \ref{C:eq ent}(iii) gives items (i) and (ii).
We also see that
\[
c_{\tau, \log \la_G} \geq \frac{p_{r} \cdot w_{r}}{w_1} \sum_{k=0}^\infty e^{-k \log \la_G} \la_G^k = \infty 
\]
so that $\Tr_{\log \la_G}(A) = \mt$.
Hence $\Eq_{\log \la_G}(\T_X) = \Eq_{\log \la_G}^\infty(\T_X)$ and for item (iii) it remains to show that $\Avt_{\log \la_G}(A)$ is a singleton.
A direct computation gives that
\begin{equation}\label{eq:phase}
\sum_{e \in G^{(1)}} \tau(\sca{x_e, p_i x_e})
=
\sum_{e \in r^{-1}(v_i)} \tau(\sca{x_e, x_e})
=
\sum_{e \in r^{-1}(v_i)} \tau(p_{s(e)}) 
=
\sum_{j \in G} g_{ji} p_j
=
\sum_{j \in G^t} (G^t)_{ij} p_j
\end{equation}
Thus $\tau \in \Avt_{\be}(A)$ if and only if $[p_1, \dots, p_n]$ is an $e^\be$-eigenvector of $G^t$ with $\ell^1$-norm equal to one.
By uniqueness of the Perron-Fr\"{o}benius eigenvector we derive that there is only one $\tau \in \Avt_{\log \la_G}(A)$.
\end{proof}

\begin{remark}
The class of irreducible graphs showcases why the convex decomposition between the finite and the infinite part is not weak*-continuous with respect to $\be > 0$.
For let $\tau$ be the trace corresponding to the Perron-Fr\"{o}benius eigenvalue $\la_G$.
Then it defines a finite state for every $\be > \log \la_G$.
Letting $\be \to \log \la_G$ gives the infinite KMS-state at $\log \la_G$.
\end{remark}

\subsection{Reducible graphs}

Let us now show how the entropy theory can recover the results of \cite{HLRS15}.
By removing sources one at a time we can write the adjacency matrix of a graph $G$ as an upper triangular matrix
\[
G
=
\begin{bmatrix} 
G_1 & \ast & \cdots & \ast \\
0 & G_2 & \cdots & \ast \\
\vdots & \vdots & \vdots & \vdots \\
0 & 0 & \cdots & G_m 
\end{bmatrix}
\]
where the $G_1, \dots, G_m$ are its irreducible components (including some be possibly equal to $[0]$).
It is well known that the entropy of the graph $G$ can be given by the Perron-Fr\"{o}benius eigenvalues of the irreducible components in the sense that
\begin{equation}\label{eq:gr en}
h_G := \limsup_k \frac{1}{k} \log (\sum_{i,j \in G} (G^k)_{ij}) = \max\{\log \la_{G_1}, \dots, \log \la_{G_m}\};
\end{equation}
see for example \cite[Theorem 4.4.4]{LM95}.

\begin{definition}
We say that a component of $G$ is \emph{communicated} by a $v \in G$ if there is a path from $v$ inside that component.
We say that $G_r$ is \emph{communicated} by $G_s$ if there exists a path starting at some $v_s \in G_s$ that ends in  some $v_r \in G_r$.
\end{definition}

The proof of the following proposition uses the main idea of the proof of \cite[Theorem 4.4.4]{LM95}.
We say that a trace $\tau$ has \emph{support in $G_s$} if there is a vertex $v \in G_s$ such that $\tau(p_v) \neq 0$.

\begin{proposition}\label{P:tr en gr}
Let $G$ be a finite graph with irreducible components $G_1, \dots, G_m$.
Then for every $\tau \in \Tr(A)$ we have that
\begin{align*}
h_X^ \tau 
=
\max \{\log \la_{G_s} \mid \textup{ $G_s$ is communicated by some $v_{r}$ in the support of $\tau$} \}.
\end{align*}
\end{proposition}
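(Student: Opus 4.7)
The plan is to reduce the computation of $h_X^\tau$ to the asymptotics of row sums of powers of the adjacency matrix $G$, and then invoke Proposition \ref{P:irr graph} together with the block structure of $G$. From $\sca{x_e, x_e} = p_{s(e)}$ and the compatibility of tensors in $X^{\otimes k}$, any admissible word $\mu = \mu_k \cdots \mu_1$ satisfies $\sca{x_\mu, x_\mu} = p_{s(\mu_1)}$, and the admissible words of length $k$ from $v_j$ to $v_i$ are in bijection with the $(G^k)_{ji}$ paths of this kind. Writing $p_j := \tau(p_{v_j})$, $S := \{j \mid p_j > 0\}$, and $N_k(j) := \sum_i (G^k)_{ji}$ for the total number of paths of length $k$ emitted by $v_j$, this identifies
\[
\sum_{|\mu|=k} \tau(\sca{x_\mu, x_\mu}) = \sum_{j \in S} p_j N_k(j),
\]
and the task becomes computing $\limsup_k k^{-1} \log \sum_{j \in S} p_j N_k(j)$.

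For the lower bound $h_X^\tau \geq \log \la_{G_s}$, the plan is to fix a $G_s$ communicated by some $v_r \in S$, pick a path of length $\ell$ from $v_r$ to a vertex $v_t \in G_s$, and observe that for $k \geq \ell$ the concatenation of this path with any path of length $k-\ell$ inside $G_s$ starting at $v_t$ is a valid path in $G$ emitted by $v_r$. This yields $N_k(r) \geq \sum_{j \in G_s}(G_s^{k-\ell})_{tj}$; bounding $\sum_{j \in S} p_j N_k(j) \geq p_r N_k(r)$ and applying Proposition \ref{P:irr graph} to the non-zero irreducible $G_s$ then gives $h_X^\tau \geq \log \la_{G_s}$, and taking the maximum over such $G_s$ produces one inequality.

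For the reverse direction, the approach is to restrict to the subgraph $\mathcal{G}_j$ of $G$ induced by the set $V_j$ of vertices reachable from $v_j$. Every path leaving $v_j$ stays in $V_j$, so $N_k(j) = \sum_{i \in V_j}(\mathcal{G}_j^k)_{ji}$. The strongly connected components of $\mathcal{G}_j$ are precisely those $G_s \subseteq V_j$: indeed if $G_s \cap V_j \neq \mt$ then by strong connectedness of $G_s$ the whole component lies in $V_j$. Ordering these components so that the adjacency matrix of $\mathcal{G}_j$ becomes block upper-triangular with the $G_s$ on the diagonal identifies
\[
\rho(\mathcal{G}_j) = \max\{\la_{G_s} \mid G_s \textup{ is communicated by } v_j\}.
\]
Gelfand's formula then delivers $N_k(j) \leq C_{j,\eps}(\rho(\mathcal{G}_j)+\eps)^k$ for every $\eps > 0$ and large $k$; summing over the finite set $S$ and letting $\eps \downarrow 0$ produces $h_X^\tau \leq \max_{j \in S} \log \rho(\mathcal{G}_j)$, which matches the previous bound.

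The main obstacle will be the identification $\rho(\mathcal{G}_j) = \max\{\la_{G_s} \mid G_s \subseteq V_j\}$; this rests on the standard fact that the spectrum of a block upper-triangular non-negative matrix is the union of the spectra of the diagonal blocks, combined with the description above of the strongly connected components of $\mathcal{G}_j$. The only convention to keep in mind is that if the support of $\tau$ communicates only with zero sink components then both sides of the claimed identity should be read as $0$, in agreement with the footnote convention on $\limsup$; in that case the argument above returns $N_k(j) = 0$ for $k$ large, so $h_X^\tau = 0$ as required.
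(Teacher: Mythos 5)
Your argument is correct, and while the lower bound coincides with the paper's (concatenate a fixed connecting path from $v_r$ into $G_s$ with arbitrary paths inside $G_s$, then invoke Proposition \ref{P:irr graph}), your upper bound takes a genuinely different route. The paper bounds $\sum_{j}(G^k)_{rj}$ by a direct combinatorial decomposition of each path into at most $m$ segments lying in irreducible components joined by transitional edges, arriving at the explicit estimate $(\alpha M k)^m \la^k$; you instead pass to the subgraph induced on the set of vertices reachable from $v_j$, identify its spectral radius with $\max\{\la_{G_s} \mid G_s \text{ communicated by } v_j\}$ via the block upper-triangular form, and apply Gelfand's formula to the max-row-sum norm. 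Both are sound: your identification of the strongly connected components of the reachable subgraph is justified because the reachable set is closed under successors, and the spectrum of a block-triangular nonnegative matrix is the union of the spectra of its diagonal blocks. What the paper's combinatorial bound buys is the explicit two-sided polynomial-times-exponential estimate $M_1\la^k \le \sum_{|\mu|=k}\tau(\sca{x_\mu,x_\mu}) \le M_2 k^m \la^k$, which is reused verbatim in Corollary \ref{C:tr en} to decide convergence of $c_{\tau,\be}$ exactly at $h_X^\tau < \be$; your Gelfand argument loses the explicit constants but still suffices for that corollary, since for $\log\rho < \be$ one can choose $\eps$ with $\rho+\eps < e^{\be}$. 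Your handling of the degenerate case where the support of $\tau$ only reaches zero components (nilpotent reachable subgraph, hence $h_X^\tau = 0$ by the footnote convention) matches the paper's implicit treatment.
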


\begin{proof}
For convenience let us set
\[
\la = \max\{ \la_{G_s} \mid \textup{ $G_s$ is communicated by some $v_{r}$ in the support of $\tau$} \}.
\]
Let $G_s$ be an irreducible component with which a vertex $v_{r}$ in the support of $\tau$ communicates.
Thus there is a path of length $N_0$ from $v_{r}$ to some $v_{i_s} \in G_s$.
Suppose that $G_s \neq [0]$.
Then the paths of length $k+N_0$ emitting from $v_{r}$ are more than the paths of length $k$ which start at $v_{i_s}$ and end inside $G_s$.
Thus we have
\begin{align*}
\sum_{i,j \in G} (PG^{k+N_0})_{ij}
\geq
p_{r} \sum_{j \in G} (G^{k+N_0})_{r j}
\geq
p_{r} \sum_{j \in G_s} (G_s^{k})_{i_s j}.
\end{align*}
Proposition \ref{P:irr graph} applied for $v_{i_s} \in G_s$ and $p = p_r$ yields
\begin{align*}
h_X^\tau
& =
\limsup_k \frac{1}{k+N_0} \log \big( \sum_{i,j \in G} (PG^{k+N_0})_{ij} \big) \\
& \geq
\limsup_k \frac{k}{k+N_0} \cdot \frac{1}{k} \log \big( p_{r} \sum_{j \in G_s} (G_s^{k})_{i_s j} \big)
=
\log \la_{G_s}.
\end{align*}
The same holds trivially if $G_s = [0]$.
Therefore $h_X^\tau \geq \log \la$.

On the other hand recall that the proof of Perron-Fr\"{o}benius Theorem asserts that there exists an $\al_1 > 0$ such that
\[
\sum_{i,j \in G_1} (G_1^k)_{ij} \leq \al_1 \la_{G_1}^k;
\]
see for example the comments preceding \cite[Proposition 4.2.1]{LM95}.
Likewise for $G_2, \dots, G_m$ and set
\[
\al := \max\{\al_1, \dots, \al_m\}.
\]
First let $v_r$ be in the support of $\tau$ and we want to estimate from above the number $\sum_{j \in G} (G^k)_{rj}$ of paths $\mu$ starting at $v_r$ with length $k$.
Such a path is of the form (read from left to right)
\[
\mu = v_r \mu_1 e_1 \cdots e_q \mu_q 
\textup{ with }
|\mu_1| + \cdots + |\mu_q| + q =k,
\]
with $\mu_1, \dots, \mu_q$ be paths entirely in some irreducible components $G_{s_1}, \dots, G_{s_q}$, respectively, while the $e_1, \dots, e_q$ are transitional edges between different components.
Of course $v_r$ communicates with all these components so that $\la \geq \la_{G_{s_1}}, \dots, \la_{G_{s_q}}$.
It is clear that the number of the transitional edges in $\mu$ cannot be more than the number of the components, that is
\[
0 \leq q \leq m.
\]
Set $M$ be the number of all transitional edges in $G$.
For each of the $e_1, \dots, e_q$ we have at most $M$ choices and at most $k$ places to insert it.
Hence the choices for all the $e_1, \dots, e_q$ in $\mu$ cannot exceed $(M k)^q$.
On the other hand suppose that $\mu_1$ has length $k(s_1)$.
We have at most $\sum_{i,j \in G_{s_1}} (G_{s_1}^k)_{ij}$ choices for such a path, and
\[
\sum_{i,j \in G_{s_1}} (G_{s_1}^k)_{ij}
\leq
\al_1 \cdot \la_{G_1}^{k(s_1)}
\leq
\al \cdot \la^{k(s_1)}.
\]
Likewise for $G_{s_2}, \dots, G_{s_q}$ and we note that 
\[
k(s_1) + k(s_2) + \dots + k(s_q) \leq k.
\]
Therefore we have that
\begin{align*}
\sum_{j \in G} (G^k)_{r j} 
 \leq
(M k)^q \cdot (\al \cdot \la^{k(s_1)}) \cdots (\al \cdot \la^{k(s_q)}) 
 \leq
(\al M k)^q \cdot \la^k
\leq
(\al M k)^m \cdot \la^k.
\end{align*}
Applying for all $v_{r_1}, \dots, v_{r_\ell}$ in the support of $\tau$ we get that
\begin{align*}
\sum_{i,j \in G} (PG^k)_{ij}
& =
\sum_{i= r_1}^{r_\ell} p_i \sum_{j \in G} (G^k)_{ij}
\leq
\ell \cdot (\al M k)^m \cdot \la^k.
\end{align*}
Consequently we get that $h_X^\tau \leq \log \la$ and the proof is complete.
\end{proof}

We can further identify the tracial components required for the KMS-structure.
Note that the root test is always conclusive for those.

\begin{corollary}\label{C:tr en}
Let $G$ be a finite graph and $\be > 0$.
Then we have that
\[
\Tr_\be(A) = \{\tau \in \Tr(A) \mid h_X^\tau < \be \}
\qand
\Avt_\be(A) = \{\tau \in \Tr(A) \mid G^t \tau = e^{\be} \tau \}.
\]
\end{corollary}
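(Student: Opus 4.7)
The plan is to derive both equalities directly from Proposition \ref{P:tr en gr} and from the computation \eqref{eq:phase} already performed inside the proof of Theorem \ref{T:HLRS13}, with the boundary case for $\Tr_\be(A)$ requiring a little extra care.

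For the first equality I would apply the Cauchy root test to the series
\[
c_{\tau,\be}=\sum_{k=0}^\infty e^{-k\be}\sum_{|\mu|=k}\tau(\sca{x_\mu,x_\mu}).
\]
By definition $h_X^\tau=\limsup_k k^{-1}\log\sum_{|\mu|=k}\tau(\sca{x_\mu,x_\mu})$, so the $k$-th root of the general term has $\limsup$ equal to $e^{h_X^\tau-\be}$. Thus $h_X^\tau<\be$ forces $c_{\tau,\be}<\infty$ and hence $\tau\in\Tr_\be(A)$, while $h_X^\tau>\be$ forces divergence.

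The boundary case $h_X^\tau=\be$ is the main obstacle, since the root test is inconclusive there. For this I would invoke Proposition \ref{P:tr en gr}, which identifies $h_X^\tau=\log\la_{G_s}$ for some irreducible component $G_s$ communicated by a vertex $v_r$ in the support of $\tau$; as $\be>0$ one has $\la_{G_s}=e^\be>1$, so $G_s\neq[0]$. The concrete lower bound extracted from the proofs of Proposition \ref{P:irr graph} and Proposition \ref{P:tr en gr} then yields an integer $N_0\geq 0$ and a constant $C>0$ with
\[
\sum_{|\mu|=k+N_0}\tau(\sca{x_\mu,x_\mu})\geq C\la_{G_s}^k\qforal k\in\bN.
\]
Substituting into $c_{\tau,\be}$ gives
\[
c_{\tau,\be}\geq Ce^{-N_0\be}\sum_{k=0}^\infty\bigl(\la_{G_s}e^{-\be}\bigr)^k=+\infty,
\]
so $\tau\notin\Tr_\be(A)$ in this case, completing the characterization.

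For the second equality, I would use that $A$ is the finite-dimensional abelian C*-algebra spanned by the projections $\{p_{v_i}\mid i=1,\dots,n\}$, so a tracial state is just a state, determined by the vector $(\tau(p_{v_1}),\dots,\tau(p_{v_n}))$. Linearity reduces the defining identity of $\Avt_\be(A)$ to testing on each projection $p_{v_i}$, and the computation \eqref{eq:phase} established inside the proof of Theorem \ref{T:HLRS13} gives
\[
\sum_{e\in G^{(1)}}\tau(\sca{x_e,p_{v_i}x_e})=\sum_{j\in G}(G^t)_{ij}\tau(p_{v_j})\qforal i.
\]
Hence $\tau\in\Avt_\be(A)$ if and only if $e^\be\tau(p_{v_i})=\sum_j(G^t)_{ij}\tau(p_{v_j})$ for every $i$, which under the canonical identification of $\tau$ with its vector of projection values is exactly $G^t\tau=e^\be\tau$.
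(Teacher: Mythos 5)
Your proof is correct and follows essentially the same route as the paper: the divergence at the boundary $h_X^\tau=\be$ is forced by the geometric lower bound coming from Proposition \ref{P:irr graph} via Proposition \ref{P:tr en gr}, and the averaging condition is reduced by linearity to the matrix identity of equation (\ref{eq:phase}). The only presentational difference is that you split off the strict cases with the root test while the paper derives two-sided geometric estimates directly; the inputs are identical.
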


\begin{proof}
The part on averaging traces follows from the same computation as in equation (\ref{eq:phase}).
For $\Tr_\be(A)$ it suffices to show that $c_{\tau, \be} < \infty$ if and only if $h_X^\tau < \be$.
To this end suppose that $\tau$ is supported on the vertices $v_{r_1}, \dots, v_{r_{\ell}}$.
For the associated diagonal matrix $P$ set
\[
p_{\min} := \min\{ p_{r_1}, \dots, p_{r_\ell} \}.
\]
Let $G_{s_1}, \dots, G_{s_q}$ be all the irreducible components of $G$ with which a vertex from the support of $\tau$ communicates, and set $G_{s_0}$ for the one that corresponds to
\[
\la_{G_{s_0}} = \max \{ \la_{G_{s_1}}, \dots, \la_{G_{s_q}} \}. 
\]
By Proposition \ref{P:tr en gr} we have that $h_X^\tau = \log \la_{G_{s_0}}$.
As before there exist $\al_s, \al_s' > 0$ such that
\[
\al_s' \la_{G_s}^k \leq \sum_{j \in G_s} (G_s^k)_{i j} \leq \al_s \la_{G_s}^k.
\]
Set
\[
\al' = \min \{\al_{s_1}', \dots, \al_{s_q}' \}
\qand
\al = \max \{\al_{s_1}, \dots, \al_{s_q} \}.
\]

If all components $G_{s_1}, \dots, G_{s_q}$ are zero then the paths emitting from the support of $\tau$ are only on transitional edges.
As in the proof of Proposition \ref{P:tr en gr} we get that $\sum_{|\mu| = k} \tau(\sca{x_{\mu}, x_{\mu}}) \leq (Mk)^m$ for the total number of transitional edges $M$.
Then $h_X^\tau \leq 0$ and trivially $h_X^\tau < \be$.

So suppose there exists at least one component which is not zero.
Then $G_{s_0} \neq [0]$ with $\la_{G_{s_0}} \geq 1$.
Let $v_{r}$ be in the support of $\tau$ that communicates with some $v_{s_0} \in G_{s_0}$.
As in Proposition \ref{P:tr en gr}, after some finite step $N_0$ we get
\[
\sum_{i,j \in G} (P G^k)_{ij}
\geq
p_r \sum_{j \in G_{s_0}} (G^{k-N_0}_{s_0})_{s_0 j}
\geq
p_{\min} \cdot \al' \cdot \la_{G_{s_0}}^{k-N_0}.
\]
On the other hand we have seen in the proof of Proposition \ref{P:tr en gr} that
\[
\sum_{i,j \in G} (PG^k)_{ij} \leq \ell \cdot (\al M k)^m \cdot \la_{G_{s_0}}^k
\]
where $M$ is the total number of transitional edges.
Hence there are constants $M_1$ and $M_2$ such that
\[
M_1 \cdot e^{-k\be} \la_{G_{s_0}}^k \leq e^{-k\be} \sum_{|\mu| = k} \tau(\sca{x_\mu, x_\mu}) \leq M_2 \cdot e^{-k\be} \la_{G_{s_0}}^k.
\]
Thus $c_{\tau, \be} < \infty$ if and only if $h_X^\tau = \log \la_{G_{s_0}} < \be$.
\end{proof}

Next we compute the entropies and the phase transitions.
We require some notation and terminology.
Let $G_s$ be a non-zero irreducible component of $G$.
Then we can identify the components that communicate with $G_s$, say $G_{s_1}, \dots, G_{s_q}$ with the understanding that $s_1 < s_2 < \cdots < s_q$.
As ``communicating'' is transitive we can write
\[
G 
=
\begin{bmatrix}
\ast & \ast & \ast \\
0 & H_s & 0 \\
0 & 0 & \ast
\end{bmatrix}
\quad
\textup{for}
\quad
H_s
:=
\begin{bmatrix}
G_{s} & \ast & \cdots & \ast \\
0 & G_{s_1} & \cdots & \ast \\
\vdots & \vdots & \cdots & \vdots \\
0 & 0 & \cdots & G_{s_q}
\end{bmatrix},
\]
up to a permutation of the vertices.
We say that $H_s$ is the \emph{communicating graph on $G_s$}.
We also need the notion of sink for the irreducible component.

\begin{definition}
Let $G$ be a finite graph with irreducible components $G_1, \dots, G_m$.
A component $G_s$ is said to be a \emph{sink} if there are no paths emitting from $G_s$, i.e., if $G_{ij} = 0$ for all $i \in G_s$ and all $j \notin G_s$.
Equivalently when $H_s = G_s$.
\end{definition}

The definition of a sink component includes the case where $G_s = [0]$.
Note that when $G_s$ is a sink component then 
\[
\sum_{i \in G_s} \sum_{j \in G} (G^k)_{ij} = \sum_{i \in G_s} \sum_{j \in G_s} (G^k)_{ij} = \sum_{i \in G_s} \sum_{j \in G_s} (G_s^k)_{ij}.
\]
Moreover $G_m$ will always be a sink component.

\begin{theorem}\label{T:graph entropy}
Let $G$ be a finite graph with irreducible components $G_1, \dots, G_m$.
Then
\[
h_X^s = \max\{\log \la_{G_1}, \dots, \log \la_{G_m}\}.
\]
If there exists a zero sink component then $h_X = 0$; otherwise
\[
h_X
=
\min\{\log \la_{G_s} \mid G_s \textup{ is a non-zero sink irreducible component of } G \}.
\]
\end{theorem}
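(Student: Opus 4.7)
The plan splits cleanly into computing $h_X^s$ and $h_X$ separately.

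For $h_X^s$ I would exploit the abelianness of $A$: the remark after the entropy definitions gives $h_X^s = h_X^x$ for any unit decomposition, so I pick the natural edge basis $x = \{x_e \mid e \in G^{(1)}\}$. Iterating the inner product relations $\sca{x_e, x_f} = \de_{e,f} p_{s(f)}$ yields $\sca{x_\mu, x_\mu} = p_{s(\mu_1)}$ for any composable path $\mu = \mu_n \cdots \mu_1$, so
\[
\sum_{|\mu|=k} \sca{x_\mu, x_\mu} \;=\; \sum_{v \in G^{(0)}} \Big(\sum_j (G^k)_{vj}\Big)\, p_v,
\]
whose norm equals $\max_i \sum_j (G^k)_{ij}$ by orthogonality of the $p_v$. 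Sandwiching this between $\frac{1}{n}\sum_{i,j} (G^k)_{ij}$ and $\sum_{i,j}(G^k)_{ij}$, where $n = |G^{(0)}|$, and invoking equation (\ref{eq:gr en}) gives $h_X^s = h_G = \max\{\log \la_{G_1}, \dots, \log \la_{G_m}\}$.

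For $h_X$ I would combine Corollary \ref{C:eq ent}(i) with Proposition \ref{P:tr en gr}. Setting
\[
L(v) \;:=\; \max\{\log \la_{G_s} \mid v \text{ communicates with the non-zero component } G_s\},
\]
interpreted as $0$ when the set is empty (matching the convention on $h_X^\tau$), Proposition \ref{P:tr en gr} becomes $h_X^\tau = \max_{v \in \supp \tau} L(v)$. Evaluating at the Dirac traces $\tau = \de_v$ (well defined since $A$ is abelian) gives $\inf_{\tau} h_X^\tau = \min_{v \in G^{(0)}} L(v)$, so Corollary \ref{C:eq ent}(i) reduces the problem to identifying $\max\{0, \min_v L(v)\}$ with the right-hand side of the theorem.

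The final step is a two-case analysis. If $G$ admits a zero sink component $G_{s_0}$, then any $v \in G_{s_0}$ has no outgoing edges, so $L(v) = 0$ and hence $h_X = 0$. Otherwise I claim every vertex reaches some non-zero sink: the upper triangular block decomposition forces any chain of distinct components traversed along outgoing edges to have strictly increasing index (using irreducibility of each non-sink component to move to a vertex with an edge exiting the component), hence to terminate at a sink, which is non-zero by assumption. Consequently $L(v) \geq \min\{\log \la_{G_s} \mid G_s \text{ non-zero sink}\}$ for every $v$, with equality attained when $v$ lies in any minimal non-zero sink (where $v$ communicates only with its own component). Since $\la_{G_s} \geq 1$ for every non-zero irreducible $G_s$, the outer $\max$ with $0$ is redundant, and the claimed identity follows. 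The main obstacle is really bookkeeping: reconciling the zero-component convention in Proposition \ref{P:tr en gr} and verifying the reach-a-sink claim cleanly via upper triangularity; the substantive entropy work has already been done in the prior propositions.
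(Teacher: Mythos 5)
Your proposal is correct and follows essentially the same route as the paper: both reduce $h_X^s$ to the graph entropy of equation (\ref{eq:gr en}) via the edge basis and the orthogonality of the vertex projections (using that $A$ is abelian so $h_X^s=h_X^x$), and both obtain $h_X$ from Corollary \ref{C:eq ent}(i) together with Proposition \ref{P:tr en gr} by analysing which (sink) components the support of a trace can communicate with. The only cosmetic differences are that you sandwich $\max_i\sum_j(G^k)_{ij}$ between $\tfrac1n\sum_{i,j}(G^k)_{ij}$ and $\sum_{i,j}(G^k)_{ij}$ where the paper invokes Proposition \ref{P:irr graph} for the lower bound, and that you package the trace dichotomy through the function $L(v)$ evaluated at Dirac traces rather than splitting traces into those supported on a sink component and those that are not.
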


\begin{proof}
For the first part, on one hand we can use equation (\ref{eq:gr en}) to get
\begin{align*}
h_X^s
& \leq 
\limsup_k \frac{1}{k} \log( \sum_{|\mu| =k} \| \sca{x_{\mu}, x_{\mu}} \|) \\
& =
\limsup_k \frac{1}{k} \log( \sum_{i,j \in G} (G^k)_{ij})
=
\max\{\log \la_{G_1}, \dots, \log \la_{G_m}\}.
\end{align*}
On the other hand notice that the projections $p_v$ are orthogonal and so for any $s = 1, \dots, m$ and $r \in G_s$ we get
\begin{align*}
\| \sum_{|\mu|=k} \sca{x_\mu, x_\mu} \|
& =
\max_{i \in G} (\#\{x_\mu \mid |\mu| = k \textup{ starting at } v_i\}) \\
& =
\max\{ \sum_{j \in G} (G^k)_{ij} \mid i \in G\}
\geq 
\sum_{j \in G_s} (G^k_s)_{r j}.
\end{align*}
Applying Proposition \ref{P:irr graph} gives that $h_X^s \geq \log \la_{G_s}$ and thus
\[
h_X^s \geq \max\{\log \la_{G_1}, \dots, \log \la_{G_m}\}.
\]

For the second part first suppose that $G$ has a zero sink component.
Thus its adjacency matrix has a row of zeroes, i.e., there is a vertex $v_0$ that emits no edges.
If $\tau_0$ is the Dirac measure on that vertex then we see that $\sum_{|\mu|=k} \tau_0(\sca{x_\mu, x_\mu}) = 0$ for all $k$, as there are no paths from that vertex.
Hence $c_{\tau_0, \be} = 1$ and so $\tau_0 \in \Tr_\be(A)$ for all $\be > 0$.
In particular we get that $h_X = 0$.

So now suppose that all sink irreducible components are non-zero.
Fix an $s_0$ such that $\la_{G_{s_0}}$ corresponds to the minimum of their Perron-Fr\"{o}benius.
First let a trace $\tau$ supported entirely inside $G_{s_0}$ and set $P$ for its corresponding diagonal matrix.
Then we have
\[
\sum_{|\mu| = k} \tau(\sca{x_\mu, x_\mu})
=
\sum_{i,j \in G} (P G^k)_{ij}
=
\sum_{i \in G_{s_0}} \sum_{j \in G} (P G^k)_{ij}
=
\sum_{i,j \in G_{s_0}} (P G_{s_0}^k)_{ij}.
\]
Proposition \ref{P:irr graph} (as used in the proof of Theorem \ref{T:HLRS13}) yields \[
h_X^\tau = \log \la_{G_{s_0}}.
\]
On the other hand let $\tau$ be a trace that is not supported entirely on a sink component.
That is, there is a vertex $v_r$ that is not in a sink component with $\tau(p_{v_r}) = p_r \neq 0$.
Hence there is a connecting edge from $v_r$ into another irreducible component.
That may not be a sink component, but moving inductively we have that there is a sink component $G_s$ with which $v_r$ communicates. 
Then Proposition \ref{P:tr en gr} gives
\[
h_X^\tau \geq \log \la_{G_s} \geq \log \la_{G_{s_0}}.
\]
As equality holds when $\tau$ is supported entirely on $G_{s_0}$ taking the infimum over all $\tau$ gives the required $h_X = \log \la_{G_{s_0}}$.
\end{proof}

As a corollary we have the full classification of the equilibrium states and their phase transitions.
The parametrizations are weak*-homeomorphisms in this case.
We show this in three steps.
The following is the analogue of the minimal components in \cite{HLRS15}.

\begin{definition}
Let $G$ be a finite graph with irreducible components $G_1, \dots, G_m$ and let $\la > 1$.
We say that a $G_s$ is \emph{$\la$-maximal} if $\la_{G_s} = \la$ and $\la_{G_s} \geq \la_{G_r}$ for any $G_r$ that is communicated by $G_s$.
\end{definition}

\begin{proposition}\label{P:constant}
Let $X$ be a C*-correspondence associated to a finite graph with irreducible components $G_1, \dots, G_m$.
Let
\[
\La := \{\log \la_{G_s} \mid \textup{ $G_s$ is a $\la_{G_s}$-maximal component} \}.
\]
The finite KMS-simplex is constant for any $\be$ in the half-open half-closed intervals defined by $\La$. 
Moreover we have an infinite state at $\be = \log \la \geq 0$ if and only if there exists a $\la$-maximal component.
\end{proposition}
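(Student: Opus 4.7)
The plan is to treat the two claims separately, relying on the parametrizations of Theorems \ref{T:para} and \ref{T:para 0} combined with Corollary \ref{C:tr en}. For the first claim, the bijection $\Phi$ identifies $\Eq_\be^{\fty}(\T_X)$ with $\Tr_\be(A) = \{\tau \in \Tr(A) \mid h_X^\tau < \be\}$, so it suffices to show this set changes only when $\be$ crosses an element of $\La$. Proposition \ref{P:tr en gr} writes $h_X^\tau$ as $\log \la_{G_s}$ for the component $G_s$ maximizing the Perron eigenvalue over components communicated by the support of $\tau$. The key observation is that this maximizing $G_s$ is automatically $\la_{G_s}$-maximal: any $G_r$ communicated by $G_s$ is also communicated by the support of $\tau$ by transitivity, and hence $\la_{G_r} \leq \la_{G_s}$ by the definition of $h_X^\tau$. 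Therefore the values of $h_X^\tau$ lie in $\La \cup \{0\}$, and $\Tr_\be(A)$ is constant on each half-open half-closed interval determined by $\La$.

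For the second claim, Theorem \ref{T:para 0} and Corollary \ref{C:tr en} reduce the question to finding a non-negative $\ell^1$-normalized eigenvector $p$ of $G^t$ for eigenvalue $\la = e^\be$ whose support avoids the ideal $I$. For the ``if'' direction, given a $\la$-maximal $G_{s_0}$, I would pass to the communicating graph $H_{s_0}$ on $G_{s_0}$: maximality forces its spectral radius to equal $\la$, and Perron-Frobenius for reducible non-negative matrices supplies a non-negative eigenvector of $H_{s_0}^t$ at $\la$. Extending by zero to the remaining blocks of $G$ still yields an eigenvector of $G^t$, because a non-zero contribution to a block $s$ outside $H_{s_0}$ would require an edge from some $l \in H_{s_0}$ to $s$, and then $s$ would be reachable from $G_{s_0}$ through $l$, already placing $s$ inside $H_{s_0}$. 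The condition $\tau|_I = 0$ then follows because each vertex in $H_{s_0}$ admits arbitrarily long incoming paths, obtained by iterating a cycle within the non-zero $G_{s_0}$ and following the reachability path to the vertex.

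For the ``only if'' direction, let $s_\diamond$ be the smallest block index on which $p$ is non-zero. Lower-triangularity of $G^t$ collapses the block equation at $s_\diamond$ to $G_{s_\diamond}^t p_{(s_\diamond)} = \la p_{(s_\diamond)}$, so Perron-Frobenius forces $\la = \la_{G_{s_\diamond}}$ and $p_{(s_\diamond)}$ to be strictly positive on $G_{s_\diamond}$. I would then show $G_{s_\diamond}$ is $\la$-maximal by contradiction along a reachability path $s_\diamond = l_0, l_1, \dots, l_k = r$ to a putative $G_r$ with $\la_{G_r} > \la$: strict positivity propagates through intermediate blocks with $\la > \la_{G_{l_i}}$ via the Neumann inversion $(\la I - G_{l_i}^t)^{-1} = \la^{-1} \sum_{m \geq 0} (G_{l_i}^t/\la)^m$, and the contradiction is reached at the first index $i$ with $\la_{G_{l_i}} \geq \la$ by pairing the block equation with the strictly positive Perron left-eigenvector of $G_{l_i}$, which annihilates the left-hand side while the right-hand side retains a strictly non-zero contribution from $p_{(l_{i-1})}$. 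The main obstacle is the resonance case $\la = \la_{G_{l_i}}$ at an intermediate block, where the Neumann inversion is unavailable; the Perron pairing above still applies there and produces the contradiction directly, so the argument closes uniformly.
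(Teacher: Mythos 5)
Your proposal is correct, and for the first claim and the ``if'' direction of the second claim it follows the paper's route almost verbatim: reduce to $\Tr_\be(A)$ and $\Avt_\be(A)$ via Theorems \ref{T:para} and \ref{T:para 0} together with Corollary \ref{C:tr en}, observe via Proposition \ref{P:tr en gr} and transitivity of communication that the positive values of $h_X^\tau$ lie in $\La$, and for existence of averaging traces pass to the communicating graph $H_{s_0}$ and extend a non-negative Perron eigenvector of $H_{s_0}^t$ by zero. (Your extra checks -- that the zero-extension really is an eigenvector of $G^t$, and that $\tau|_I=0$ because every vertex of $H_{s_0}$ receives arbitrarily long paths -- are details the paper leaves implicit; the latter is in fact automatic for any averaging trace by Proposition \ref{P:hav}.) The genuine divergence is in the ``only if'' direction. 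The paper argues through entropy: an averaging trace at $\log\la$ has $h_X^\tau=\log\la$, Proposition \ref{P:tr en gr} produces the component $G_{s_0}$ realizing this maximum, and transitivity shows $G_{s_0}$ is $\la$-maximal -- three lines, at the cost of leaning on the already-proved entropy formula. You instead work directly with the eigenvector equation $G^t p=\la p$: collapse it at the minimal supported block to identify $\la=\la_{G_{s_\diamond}}$ with $p_{(s_\diamond)}>0$, propagate strict positivity through sub-$\la$ blocks via the Neumann series for $(\la I-G_{l_i}^t)^{-1}$, and kill a putative downstream block with $\la_{G_{l_i}}\ge\la$ by pairing with its positive Perron eigenvector (which, as you note, also disposes of the resonance case $\la=\la_{G_{l_i}}$). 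This is longer but purely matrix-theoretic, independent of Proposition \ref{P:tr en gr}, and as a by-product pins down structural information about the support of an averaging trace that the paper's argument does not make explicit.
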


\begin{proof}
First we show that the possible values for $\tau \in \Tr(A)$ are $h_X^\tau < 0$ or $h_X^\tau = \log \la$ for some $\log \la \in \La$.
Suppose that $h_X^\tau > 0$ and by Proposition \ref{P:tr en gr} let $G_{s_0}$ such that $h_X^\tau = \log \la_{G_{s_0}}$. 
As $h_X^\tau > 0$ then $G_{s_0} \neq [0]$.
In order to reach contradiction assume that $\log \la_{G_{s_0}} \notin \La$.
Then there exists a $G_r$ that is communicated by $G_{s_0}$ such that $\la_{G_r} > \la_{G_{s_0}}$.
But then $G_r$ is also communicated by the support of $\tau$ giving the contradiction
\[
h_X^\tau \geq \log \la_{G_r} > \log \la_{G_{s_0}} = h_X^\tau.
\]

Next we show that $\Tr_\be(A) \neq \mt$ for all $\be > h_X$.
Let $G_{s_0}$ be the sink component for which $h_X = \log \la_{G_{s_0}}$.
Then any trace supported entirely on $G_{s_0}$ has entropy equal to $\log \la_{G_{s_0}}$ and thus is in $\Tr_\be(A)$ for all $\be > \log \la_{G_{s_0}} = h_X$.

Now let $\tau \in \Tr_{\be}(A)$ with $\be \in  (\log \la_1, \log \la_2]$ and $\log \la_1, \log \la_2 \in \La$.
On the one hand we have $h_X^{\tau} \in \La$ from the first paragraph.
On the other hand Corollary \ref{C:tr en} gives that $h_X^\tau < \be \leq \log \la_2$ and so $h_X^\tau \leq \log \la_1$.
But then $\tau \in \Tr_{\be'}(A)$ for any other $\be' > \log \la_1$.
Therefore
\[
\Tr_\be(A) = \Tr_{\be'}(A) \foral \be, \be' \in (\log \la_1, \log \la_2].
\]

For the second part first assume that there is an averaging trace $\tau$ at $\log \la > 0$.
Then $h_X^\tau = \log \la$.
Let $G_{s_1}, \dots, G_{s_q}$ be all components with which the support of $\tau$ communicates.
Then Proposition \ref{P:tr en gr} gives a $G_{s_0}$ such that
\[
\log \la_{G_{s_0}} = \max\{\log \la_{G_{s_1}}, \dots, \log \la_{G_{s_q}} \} = h_X^\tau = \log \la.
\]
Let a $G_r$ that is communicated by $G_{s_0}$.
Then $G_r$ is communicated also by the support of $\tau$ so that $G_r \in \{G_{s_1}, \dots, G_{s_q}\}$ which implies that 
\[
\log \la_{G_r} \leq \max\{\log \la_{G_{s_1}}, \dots, \log \la_{G_{s_q}} \} = \la_{G_{s_0}},
\]
giving that $G_{s_0}$ is $\la$-maximal.

Conversely suppose that $G_{s_0}$ is $\la$-maximal for $\la \geq 1$.
This means that the spectral radius of $H_{s_0}$, and thus of $H_{s_0}^t$, equals $\log \la_{G_{s_0}}$.
A variation of the Perron-Fr\"{o}benius Theorem gives an eigenvector $p \geq 0$ of $H_{s_0}^t$ at $\log \la_{G_{s_0}}$.
Then $p' = [0, \; p, \; 0]$ is an eigenvector of $G^t$ at $\log \la_{G_{s_0}}$ since
\[
G^t \cdot
p'
=
\begin{bmatrix}
\ast & 0 & 0 \\
\ast & H_{s_0}^t & 0 \\
\ast & 0 & \ast
\end{bmatrix}
\cdot
\begin{bmatrix}
0 \\
p \\
0
\end{bmatrix}
=
\begin{bmatrix}
0 \\
H_{s_0}^t p \\
0
\end{bmatrix}
=
e^{-\la_{G_{s_0}}}  \cdot p'.
\]
The $\ell^1$-normalization of $p'$ defines an averaging trace at $\log \la_{G_{s_0}}$ and the proof is complete.
\end{proof}

\begin{remark}\label{R:traces}
Our analysis identifies the simplices $\Tr_\be(A)$ for all $\be \geq h_X$.
To this end order the phase transitions in $\La$ by
\[
h_X = \log \la_1 < \cdots < \log \la_q = h_X^s.
\]
For any such $\log \la_n$ there are $\la_n$-maximal components $G_{n, 1}, \dots, G_{n, k_n}$.
Notice that no $G_{m, s'}$ can be communicated by any $G_{n, s}$ when $m > n$.
Set 
\[
V_n : = \{v \in G \mid \textup{ $v$ communicates with some $G_{n, s}$, $s = 1, \dots, k_n$}\}.
\]
Proposition \ref{P:tr en gr} yields
\[
h_X^\tau \leq \log \la_{n}
\textup{ iff }
\tau|_{V_j} = 0 \foral j > n.
\]
Therefore
\[
\Tr_\be(A)
=
\{\tau \in \Tr(A) \mid \tau|_{V_j} = 0 \foral j > n \}
\foral
\be \in (\log \la_{G_{s_n}}, \log \la_{G_{s_{n+1}}}].
\]
In particular there is a finite algorithm to decide if some $\de_v$ is in $\Tr_\be(A)$ and thus to identify $\Tr_\be(A)$:
\begin{itemize}[leftmargin=48pt, itemindent=0pt, itemsep=0pt]
\item[Step 0.] For $v \in G_{s_0}$ set $\La_v^{(0)} = \la_{G_{s_0}}$.

\item[Step 1.] If $G_{s_0 + 1}$ is communicated by $G_{s_0}$ then set $\La_v^{(1)} = \{\la_{G_{s_0}}, \la_{G_{s_1}}\}$.
Otherwise set $\La_v^{(1)} = \La_v^{(0)}$.

\item[Step $N$.] Suppose that $\La_v^{(N-1)} = \{\la_{G_{s_0}}, \dots, \la_{G_{s_{N-1}}}\}$.
If $G_{s_{N-1} + 1}$ is communicated by any of the $G_{s_0}, \dots, G_{s_{N-1}}$ then set $\La_v^{(N)} = \La_v^{(N-1)} \cup \{\la_{G_{s_{N-1} + 1}}\}$.
Otherwise set $\La_v^{(N)} = \La_v^{(N-1)}$.
\end{itemize}

\noindent
This algorithm terminates as we have finite components at a step, say $N_0$, and set $\la_v = \max \La_v^{(N_0)}$.
Then $h_X^\tau = \log \la_v$ and so $\de_v \in \Tr_\be(A)$ whenever $\log \la_v < \be$.
\end{remark}

\begin{theorem}\label{T:phase G}
Let $X$ be a C*-correspondence associated to a finite graph with irreducible components $G_1, \dots, G_m$.
Let
\[
\La := \{\log \la_{G_s} \mid \textup{ $G_s$ is $\la_{G_s}$-maximal}\}
=
\{h_X =\log \la_1 < \cdots < \log \la_q = h_X^s\}
\]
and let $V_1, \dots, V_q$ be the induced vertex sets.
Then the phase transitions occur at $\La$ and:
\begin{enumerate}[leftmargin=30pt, itemindent=0pt, itemsep=1pt]
\item For $\be \in (\log \la_n, \la_{n+1}]$ with $\log \la_n, \log \la_{n+1} \in \La$ we have an affine weak*-homeomorphism
\[
\Phi \colon \{\tau \in \Tr(A) \mid \supp \tau|_{V_j} = 0 \foral j > n \} \to \Eq_\be^{\fty}(\T_X).
\]
\item For $\be  = \log \la \in \La$ we have an affine weak*-homeomorphism
\[
\Psi \colon \{ \tau \in \Tr(A) \mid \textup{ $H_{s}^t \tau = e^{\be} \tau$ for some $G_s$ that is $\la$-maximal}\} \to \Eq_\be^{\infty}(\T_X).
\]
\end{enumerate}
\end{theorem}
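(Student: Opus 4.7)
My proof plan is to obtain both parts as direct applications of Theorem~\ref{T:para} and Theorem~\ref{T:para 0}, with the domains identified via the graph-theoretic entropy characterization developed in the preceding results.

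For part (i), Theorem~\ref{T:para} provides the bijection $\Phi \colon \Tr_\be(A) \to \Eq_\be^\fty(\T_X)$, and the task reduces to recognizing $\Tr_\be(A)$ in the stated form. Corollary~\ref{C:tr en} gives $\Tr_\be(A) = \{\tau \in \Tr(A) \mid h_X^\tau < \be\}$; Proposition~\ref{P:tr en gr} then identifies $h_X^\tau$ as the maximum of $\log\la_{G_s}$ over irreducible components $G_s$ communicated by the support of $\tau$. The bookkeeping from Remark~\ref{R:traces} rephrases the condition $h_X^\tau < \be$ for $\be \in (\log\la_n, \log\la_{n+1}]$ precisely as $\tau|_{V_j} = 0$ for all $j > n$. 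This set is weak*-closed, being the intersection of the weak*-closed hyperplanes $\{\tau(p_v) = 0\}$ for $v \in V_j$, so the last assertion of Theorem~\ref{T:para} upgrades $\Phi$ to a weak*-homeomorphism, and Corollary~\ref{C:ext} secures the preservation of the convex-combination structure in the sense of that corollary.

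For part (ii), Theorem~\ref{T:para 0} yields an affine weak*-homeomorphism from $\{\tau \in \Avt_\be(A) \mid \tau|_I = 0\}$ onto $\Eq_\be^\infty(\T_X)$. By Corollary~\ref{C:tr en}, $\Avt_\be(A)$ consists of the non-negative $G^t$-eigenvectors at $e^\be$ of $\ell^1$-norm one, so what remains is to match these eigenvectors with the simplex generated by eigenvectors of $H_s^t$, over $\la$-maximal $G_s$, extended by zero outside the vertex set of $H_s$. The forward direction is the Perron--Fr\"obenius construction already carried out in the second half of Proposition~\ref{P:constant}; the extension by zero preserves the eigenvalue equation because the vertex set of $H_s$ is closed under successor, so any non-zero off-diagonal contribution from $G^t$ would land back inside $H_s$. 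Moreover, $\la$-maximal $G_s$ with $\la = e^\be > 1$ implies $G_s$ contains a cycle, hence every vertex in $H_s$ is the endpoint of infinitely many paths and $\tau|_I = 0$ follows automatically.

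The converse is the main technical step. Given a non-negative $G^t$-eigenvector $p$ at $e^\be$, I would block-decompose it along the irreducible components $G_1, \dots, G_m$. Letting $s_0$ be the smallest index with $p^{s_0} \neq 0$, the corresponding block of $G^t p = e^\be p$ collapses to $G_{s_0}^t p^{s_0} = e^\be p^{s_0}$, and positivity combined with Perron--Fr\"obenius forces $\la_{G_{s_0}} = e^\be$. Then $G_{s_0}$ must be $\la$-maximal: if some $G_r$ communicated by $G_{s_0}$ had $\la_{G_r} > e^\be$, the downstream equation $(e^\be I - G_r^t) p^r = $ (off-diagonal contribution from the upstream) could not be solved by a non-negative $p^r$, because $(e^\be I - G_r^t)^{-1}$ develops mixed signs once $e^\be$ falls below the spectral radius of $G_r^t$. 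The support of $p$ is then shown to lie inside the vertex set of $H_{s_0}$, possibly after subtracting a positive multiple of an $H_{s_0}^t$-eigenvector and iterating on the residual; this yields the desired convex decomposition over $\la$-maximal components, and the affine weak*-homeomorphism property transfers through this identification. The main obstacle is this converse, especially in handling the degenerate cases where several $\la$-maximal components coexist so that the Perron--Fr\"obenius eigenspace of some $H_s^t$ at $e^\be$ acquires positive dimension, while simultaneously keeping both support containment and positivity under control.
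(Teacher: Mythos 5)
Your identification of the domains is correct and follows the paper's own route (Corollary \ref{C:tr en}, Proposition \ref{P:tr en gr}, Remark \ref{R:traces}), but there is a genuine gap in part (i) at the weak*-homeomorphism step. The conditional clause of Theorem \ref{T:para} requires that $\Eq_\be^{\fty}(\T_X)$ be weak*-closed, not that $\Tr_\be(A)$ be weak*-closed; you verify the latter and invoke the former. These are not interchangeable: the proof of Theorem \ref{T:para} only establishes that $\Phi^{-1}$ is weak*-continuous, and the compactness needed to upgrade a continuous bijection to a homeomorphism must sit on the domain of that continuous map, namely on $\Eq_\be^{\fty}(\T_X)$. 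Its closedness is exactly what is in doubt: a weak*-limit $\vphi$ of states $\vphi_j \in \Eq_\be^{\fty}(\T_X)$ has $\vphi(p_0) = \lim_j c_{\tau_j,\be}^{-1}$, which could vanish if $c_{\tau_j,\be} \to \infty$. The paper closes this gap by proving directly that $\Phi$ is weak*-continuous on the compact set $\Tr_\be(A)$: setting $W = (V_{n+1} \cup \cdots \cup V_q)^c$ and $p_W = \sum_{i \in W} p_{v_i}$, it shows via the Banach--Steinhaus theorem that the increasing partial sums $\sum_{k=0}^{N} e^{-k\be} \sum_{|\mu| = k,\, s(\mu) \in W} \sca{x_\mu, a x_\mu}$ converge in $A$, whence $\tau \mapsto \sum_{k} e^{-k\be} \sum_{|\mu|=k} \tau(\sca{x_\mu, a x_\mu})$ is weak*-continuous on $\Tr_\be(A)$ (every $\tau$ there being supported in $W$), and in particular $c_{\tau,\be}$ is uniformly bounded there. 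You need this norm-convergence argument, or an equivalent uniform bound on $c_{\tau,\be}$ over $\Tr_\be(A)$, to finish part (i); weak*-closedness of the parameter set alone does not do it.

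For part (ii) you propose substantially more than the paper does: the paper reads the domain of $\Psi$ off Corollary \ref{C:tr en} ($\Avt_\be(A)$ is the set of $\ell^1$-normalized positive $e^\be$-eigenvectors of $G^t$) together with Proposition \ref{P:constant}, whose forward direction already shows that any averaging trace at $\log\la$ forces the existence of a $\la$-maximal component communicated by its support, and whose converse is the eigenvector-extension computation you cite. Your additional converse analysis --- that a positive $G^t$-eigenvector at $e^\be$ must decompose over $\la$-maximal blocks, via the sign behaviour of $(e^\be I - G_r^t)^{-1}$ and an iterative subtraction --- addresses a point on which the paper is genuinely terse (it does not explicitly verify that every $\tau \in \Avt_\be(A)$ is supported in the vertex set of some $H_s$), but as sketched it is not a proof: the "subtract and iterate" step and the degenerate multi-component case you flag are precisely where it would need to be made rigorous before it could replace the paper's appeal to Proposition \ref{P:constant}.
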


\begin{proof}
By Theorem \ref{T:graph entropy} we have at least one averaging trace at each of the values in $\La$.
The same proof gives the domain of $\Psi$ (which has been shown to be a weak*-homeomorphism).
Remark \ref{R:traces} shows that the domain of $\Phi$ at $\be \in (\log \la_n, \log \la_{n+1}]$ is exactly
\[
\Tr_\be(A)
=
\{\tau \in \Tr(A) \mid \supp \tau|_{V_j} = 0 \foral j > n \},
\]
which is a weak*-closed subset of $\Tr(A)$.
Let
\[
p_W := \sum_{i \in W} p_{v_i} 
\qfor
W := (V_{n+1} \cup \cdots \cup V_q)^c.
\]
For $\tau \in \Tr(A)$ set $\tau_W(\cdot):= \tau(p_W \cdot p_W)$ and compute
\begin{align*}
\sum_{|\mu| = k, s(\mu) \in W} \tau(\sca{x_\mu, a x_\mu})
& =
\sum_{k=0}^N e^{-k\be} \sum_{|\mu| = k} \tau(p_W \sca{x_\mu, a x_\mu} p_W) 
 =
\sum_{k=0}^N e^{-k\be} \sum_{|\mu| = k} \tau_W(\sca{x_\mu, a x_\mu}).
\end{align*}
However by definition $\tau_W \in \Tr_\be(A)$ forcing
\[
\tau\bigg(\sum_{k=0}^{N} e^{-k\be} \sum_{|\mu| = k, s(\mu) \in W} \sca{x_\mu, a x_\mu}\bigg)
=
\sum_{k=0}^{N} e^{-k\be} \sum_{|\mu| = k} \tau_W(\sca{x_\mu, a x_\mu})
< c_{\tau_W, \be} \cdot \nor{a} <\infty
\]
Thus by the Banach-Steinhaus Theorem the sequence $\bigg(\sum_{k=0}^N e^{-k\be} \sum_{|\mu| = k, s(\mu) \in W} \sca{x_\mu, a x_\mu}\bigg)_N$ converges, for all $a \in A$.
The point here is that $W$ is given by $\be$, and makes the series to converge.

Suppose that $\tau_j \stackrel{\textup{w*}}{\longrightarrow} \tau$ in $\Tr_\be(A)$.
Then $\tau_j = (\tau_j)_W$ and $\tau = \tau_W$.
We can appeal to the Monotone Convergence Theorem to have that
\begin{align*}
\lim_j \sum_{k=0}^\infty e^{-k\be} \sum_{|\mu| = k} \tau_j(\sca{x_\mu, a x_\mu}) 
& = 
\lim_j \sum_{k=0}^\infty e^{-k\be} \sum_{|\mu| = k, s(\mu) \in W} \tau_j(\sca{x_\mu, a x_\mu}) \\
& = 
\lim_j \tau_j\bigg(\sum_{k=0}^\infty e^{-k\be} \sum_{|\mu| = k, s(\mu) \in W} \sca{x_\mu, a x_\mu}\bigg) \\
& =
\tau\bigg(\sum_{k=0}^N e^{-k\be} \sum_{|\mu| = k, s(\mu) \in W} \sca{x_\mu, x_\mu}\bigg) \\
& =
\sum_{k=0}^\infty e^{-k\be} \sum_{|\mu| = k, s(\mu) \in W} \tau(\sca{x_\mu, a x_\mu})\\
& =
\sum_{k=0}^\infty e^{-k\be} \sum_{|\mu| = k} \tau(\sca{x_\mu, a x_\mu}).
\end{align*}
Thus the parametrization $\Phi$ is weak*-continuous, and hence a weak *-homeomorphism.
\end{proof}

We continue with some examples to highlight the methods in the proofs.
To this end we will put some effort to compute the predicted values ad-hoc.
We start with an example that showcases two points:
\begin{enumerate}[leftmargin=30pt, itemindent=0pt, itemsep=1pt]
\item It may happen that a trace $\tau$ has $h_X^\tau \leq 0$ but it may not give an averaging trace at $0 = h_X^\tau$.
\item It may be the case that a trace is in $\Tr_\be(A)$ for all $\be > 0$, so that $\T_X$ has finite KMS-states at every $\be > 0$.
\end{enumerate}

\begin{example}
Consider the following graph on two vertices $v$ and $w$:
\[
\xymatrix@R=20pt@C=50pt{
\bullet_w & & \bullet_{v} \ar@{->}_{(2)}@(ur,ul) \ar[ll]
}
\]
It is immediate that the adjacency matrix satisfies
\[
G
=
\begin{bmatrix} 2 & 1 \\ 0 & 0 \end{bmatrix}
\qand
G^k
=
\begin{bmatrix} 2^k & 2^{k-1} \\ 0 & 0 \end{bmatrix}.
\]
If $\tau$ is a trace with $\tau(p_v) \neq 0$ then
\[
\sum_{|\mu| = k} \tau(\sca{x_\mu, x_\mu}) = \tau(p_v) \cdot (2^{k} + 2^{k-1}) = \tau(p_v) \cdot 3 \cdot 2^{k-1},
\]
and so $h_X^\tau = \log 2$.
On the other hand if $\tau(p_v) = 0$ then $\tau = \de_w$ with $c_{\de_w, \be} = 1$ for all $\be > 0$.
Moreover we see that $G^t$ has an eigenvector $[2/3, \; 1/3]$ for the eigenvalue $2$.
Therefore in this example we get
\[
\Tr_\be(A) = 
\begin{cases}
\{\de_w\} & \textup{ if } \be \in (0, \log 2], \\
\Tr(A) & \textup{ if } \be \in (\log 2, +\infty),
\end{cases}
\qand
\Avt_\be(A)
=
\begin{cases}
\{\frac{2}{3}\de_v + \frac{1}{3} \de_w\} & \textup{ if } \be = \log 2, \\
\mt & \textup{ otherwise}.
\end{cases}
\]
In particular we see that $h_X^{\de_v} = \log 2$ but $\de_{v}$ does not define an averaging trace at $\log 2$ (actually this holds for any trace that is supported on $v$, apart from $p = \frac{2}{3}\de_v + \frac{1}{3} \de_w$).

On the other hand we see that $\de_w$ defines a finite KMS-state for all $\be > 0$ given by
\begin{align*}
\Phi_\be(\de_w)(f)
& =
\begin{cases}
1 & \textup{ if $f = p_w$ or $f = 1$},\\
0 & \textup{ otherwise},
\end{cases}
\end{align*}
for all $f \in \T_X$.
By taking the weak*-limit as $\be \to 0$, the KMS-states theory then predicts a gauge-invariant tracial state on $\T_X$ of the same form (being constant with respect to $\be$).
This can be created directly.
Indeed let the representation of $\T_X$ on the $\ell^2$-path space of $G$, i.e., on the space generated by $\{\xi_\mu \mid \textup{ $\mu$ is a path in $G$}\}$.
Then we get that the vector state
\[
\Phi(f) = \sca{\xi_w, f \xi_w}, \foral f \in \T_X,
\]
has the aforementioned form.
\end{example}

The next two examples illustrate the contribution of just the sink irreducible components for getting $h_X$, and the contribution of all communicating components for getting a tracial entropy.

\begin{example}
Consider the following graph on three vertices:

\vspace{5pt}
\begin{align*}
\xymatrix@R=20pt@C=50pt{
& & \bullet_{v_2} \ar@{->}@(ur,ul) \ar[drr] & & \\
\bullet_{v_1} \ar@{->}@(ur,ul) \ar[urr] & & & & \bullet_{v_3} \ar@{->}_{(2)}@(ur,ul)
}
\end{align*}
Its adjacency matrix then satisfies
\[
G
=
\begin{bmatrix}
1 & 1 & 0 \\
0 & 1 & 1 \\
0 & 0 & 2
\end{bmatrix}
\qand
G^k
=
\begin{bmatrix}
1 & k & a_k \\
0 & 1 & b_k \\
0 & 0 & 2^k
\end{bmatrix},
\]
for the sequences
\[
a_k
=
\begin{cases}
0 & \textup{ if } k=1,\\
1 & \textup{ if } k=2,\\
(k - 1) + 2 a_{k-1} & \textup{ if } k \geq 3,
\end{cases}
\qand
b_k = 2^{k+1} -1 \qfor k \geq 1.
\]
On one hand we have that $a_k \geq 2^{k-1}$ for $k \geq 2$.
On the other hand if $k \geq 2$ then
\begin{align*}
a_k 
& =
(k-1) + 2 a_{k-2}
 =
\cdots
\\
& =
(k-1) + 2(k-2) + 2^2(k-3) + \cdots + 2^{k-3} + 2^{k-3} a_2 \\
& \leq
k (1 + 2 + 2^2 + \cdots + 2^{k-3} + 2^{k-2})
=
k \cdot 2^{k-1}.
\end{align*}
Therefore we have
\[
2^{k-1} \leq k + a_k \leq k \cdot 2^{k}
\qand
2^k \leq 1 + b_k \leq 2^{k+1}.
\]
For a trace $\tau \in \Tr(A)$ let $p_i =\tau(p_{v_i})$ and set $p_{\max} = \max\{p_1, p_2, p_3\}$ and $p_{\min} =\min\{p_i \neq 0 \mid i=1,2,3\}$.
Thus we get that
\[
p_{\min} \cdot 2^{k-1}
\leq 
p_1(k + a_k) + p_2(1+ b_k) + p_3 2^k
=
\sum_{|\mu| = k} \tau(\sca{x_{\mu}, x_{\mu}})
\leq 
p_{\max} \cdot 3k \cdot 2^{k+1}.
\]
Consequently $c_{\tau, \be} < \infty$ if and only if $\be>\log 2$.
In particular $h_X^\tau = \log 2$ for all $\tau \in \Tr(A)$ and so $h_X = \log 2$.
We see here that $G$ has one sink irreducible component that contributes to $h_X$.
Moreover $G^t$ has two eigenvectors $[0, \; 1/2, \; -1/2]$ and $[0, \; 0, \; 1]$ at $1$ and $2$, respectively.
Thus we have
\[
\Tr_\be(A)
=
\begin{cases}
\mt & \textup{ if } \be = \log 2, \\
\Tr(A) & \textup{ if } \be \in (\log 2, +\infty),
\end{cases}
\qand
\Avt_\be(A)
=
\begin{cases}
\{\de_{v_3}\} & \textup{ if } \be = \log 2, \\
\mt & \textup{ if } \be \in (\log 2, +\infty).
\end{cases}
\]
Although $G^t$ has a smaller eigenvalue than $2$, the corresponding eigenvector is not positive and thus does not contribute to the KMS-simplex.
\end{example}

\begin{example}\label{E:inductive limit}
Consider the following graph $G$ on three vertices:
\begin{align*}
\xymatrix@R=20pt@C=50pt{
& & \bullet_{v_1} \ar@{->}_{(2)}@(ur,ul) \ar[drr] \ar[dll] & & \\
\bullet_{v_3} \ar@{->}_{(3)}@(ur,ul) & & & & \bullet_{v_2} \ar@{->}_{e}@(ur,ul)
}
\end{align*}
It follows that the adjacency matrix satisfies
\[
G
=
\begin{bmatrix}
2 & 1 & 1 \\
0 & 1 & 0 \\
0 & 0 & 3
\end{bmatrix}
\qand
G^k
=
\begin{bmatrix}
2^{k} & 2^{k} -1 & 3^{k} - 2^{k} \\
0 & 1 & 0 \\
0 & 0 & 3^k
\end{bmatrix}.
\]
Therefore if $\tau \in \Tr(A)$ with $p_i = \tau(p_{v_i})$ we have that
\[
\sum_{|\mu|=k} \tau(\sca{x_\mu, x_\mu})
=
p_1(3^{k} + 2^{k} - 1) + p_2 + p_3 3^{k}.
\]
Set $p_{\max} = \max\{p_1, p_2, p_3\}$ and $p_{\min} = \min\{p_i \neq 0 \mid i=1,2,3\}$.
We then see that if $p_1 \neq 0$, or if $p_1 =0$ but $p_3 \neq 0$ then
\[
p_{\min} 3^{k} \leq \sum_{|\mu| = k} \tau(\sca{x_\mu, x_\mu}) \leq p_{\max} 3^{k+1}.
\]
Thus $c_{\tau, \be} < \infty$ if and only if $\be > \log 3$.
Here we see that $h_X^{\tau} = \log 3$ even for $\tau$ with $\tau(p_{v_1}) \neq 0$ and $\tau(p_{v_3}) = 0$.
Such a state may not be supported on $v_3$, e.g., $\tau = \de_{v_1}$, but its support communicates with $v_3$ and thus its entropy is affected by it.

On the other hand if $p_1 = p_3 = 0$ then $\tau = \de_{v_2}$, and so $c_{\de_{v_2}, \be} < \infty$ if and only if $\be>0$, in which case $c_{\de_{v_2}, \be} = (1 - e^{-\be})^{-1}$.
Furthermore $G^t$ has three eigenvalues, namely $1,2$ and $3$ with corresponding $\ell^1$-eigenvectors to be $[0, \; 1, \; 0]$, $[1/3, \; 1/3, \; -1/3]$ and $[0, \; 0, \; 1]$.
The averaging traces correspond to positive eigenvectors and so we have that
\[
\Tr_\be(A)
=
\begin{cases}
\{\de_{v_2}\} & \textup{ if } \be \in (0, \log 3], \\
\Tr(A) & \textup{ if } \be \in (\log 3, +\infty),
\end{cases}
\qand
\Avt_\be(A)
=
\begin{cases}
\{\de_{v_3} \} & \textup{ if } \be = \log 3, \\
\mt & \textup{ otherwise.}
\end{cases}
\]
The eigenvector corresponding to the eigevalue $2$ does not contribute an averaging trace.

It is worth paying some more attention to $\{\de_{v_2}\}$.
By construction we see that
\begin{align*}
\de_{v_2}(\sca{x_f, p_v x_f})
& =
\begin{cases}
\de_{v_2}(p_{s(f)}) & \textup{ if $v =r(f)$}, \\
0 & \textup{ otherwise},
\end{cases} \\
& =
\begin{cases}
1 & \textup{ if $v_2 = s(f)$ and $v =r(f)$}, \\
0 & \textup{ otherwise},
\end{cases} \\
& =
\de_{v_2, v} \cdot \de_{e,f}.
\end{align*}
Thus $\de_{v_2}$ is an averaging trace at $\be = 0$ since
\[
\sum_{f \in G^{(1)}} \de_{v_2}(\sca{x_f, p_v x_f})
=
\de_{v_2, v} = \de_{v_2}(p_v).
\]
On the other hand for $\be \in(0, \log 3)$ we have a single finite KMS-state with the property
\begin{align*}
\Phi_\be(\de_{v_2})(L_{\nu_1} L_{\nu_2}^*)
& =
\de_{|\nu_1|, |\nu_2|} (1 - e^{-\be}) \sum_{k=0}^\infty e^{-k\be} \sum_{|\mu| = k} \de_{v_0}(L_\mu^* L_{\nu_2}^* L_{\nu_1} L_\mu) \\
& =
\begin{cases}
1 & \textup{ if  $\nu_1=\nu_2=e^\ell$ for some $\ell \in \bN$},\\
0 & \textup{ otherwise.}
\end{cases}
\end{align*}
The KMS-states theory predicts the existence of a KMS-state at $\be=0$ as the weak*-limit of the $\Phi_\be(\de_{v_2})$ for $\be \to 0$.
In particular there exists a tracial state of $\ca(G)$ such that $\Phi = \Phi \circ E$, for the conditional expectation $E$, for which
\[
\Phi(L_{\nu_1} L_{\nu_2}^*)
\begin{cases}
1 & \textup{ if  $\nu_1 = \nu_2 = e^\ell$ for some $\ell \in \bN$},\\
0 & \textup{ otherwise.}
\end{cases}
\]
We will show how this state can be induced directly.
To this end let the representation on the infinite path space of $G$.
For the infinite path $e^{(\infty)}$ on $v_2$ set the state
\[
\Psi(f) := \sca{\xi_{e^{(\infty)}}, E(f) \xi_{e^{(\infty)}}} 
\foral
f \in \ca(G).
\]
Then a direct computation yields
\begin{align*}
\Psi(L_{\nu_1} L_{\nu_2}^* L_{\nu_1'} L_{\nu_2'}^*)
& =
\de_{|\nu_1| + |\nu_1'|, |\nu_2| + |\nu_2'|} \sca{ L_{\nu_2} L_{\nu_1}^* \xi_{e^{(\infty)}}, L_{\nu_1'} L_{\nu_2'}^*\xi_{e^{(\infty)}}} \\
& =
\begin{cases}
1 & \textup{ if $\nu_1 =e^\ell$, $\nu_2 = e^k$, $\nu_1' = e^m$, $\nu_2' = e^n$, and $\ell + m = k + n$}, \\
0 & \textup{ otherwise}.
\end{cases}
\end{align*}
The symmetry on $\nu_i, \nu_i'$ shows that $\Psi$ is tracial.
By setting $\nu_1'= \nu_2' = \mt$, namely the void path, we get that $\Psi$ coincides with $\Phi$.
Note that $\Psi$ coincides with the state $\Psi(\de_{v_0})$ obtained by the inductive limit process of Theorem \ref{T:para 0} for the averaging trace $\de_{v_0}$ (even though $\be =0$).
\end{example}

Finally let us give an example with multiple phase transitions.
With a small tweak we get a case where the Cuntz-Pimsner algebra has plenty of finite KMS-states.

\begin{example}\label{E:collection}
Fix a collection of positive integers $\{1 < a_1< \dots < a_n\}$ and let the graph $G$ be 
\begin{equation}\tag{G}
\xymatrix@R=20pt@C=50pt{
& & \bullet_{v_0} \ar@{->}_{e}@(ur,ul) & & \\
\bullet_{v_1} \ar@{->}_{(a_1)}@(dl,dr) \ar[urr] & & \cdots & & \bullet_{v_n} \ar@{->}_{(a_n)}@(dl,dr) \ar[ull]
}
\end{equation}
where $(a_j)$ denotes the number of cycles on the vertex $v_j$.
Its adjacency matrix satisfies
\[
G
=
\begin{bmatrix}
a_n & 0 & \cdots & 0 & 1 \\
0 & a_{n-1} & \cdots & 0 & 1 \\
\vdots & \vdots & \cdots & \vdots & \vdots \\
0 & 0 & \cdots & a_1 & 1 \\
0 & 0 & \cdots & 0 & 1
\end{bmatrix}
\qand
G^k
=
\begin{bmatrix}
a_n^k & 0 & \cdots & 0 & (a_n^{k} -1)/(a_n - 1) \\
0 & a_{n-1}^k & \cdots & 0 & (a_{n-1}^{k} -1)/(a_{n-1} - 1) \\
\vdots & \vdots & \cdots & \vdots & \vdots \\
0 & 0 & \cdots & 1 & (a_1^{k} -1)/(a_1 - 1) \\
0 & 0 & \cdots & 0 & 1
\end{bmatrix}
.
\]
Let $\tau$ be a trace on the vertices and set $P$ be the diagonal matrix with $p_i =\tau(p_{v_i})$.
Then
\[
\sum_{|\mu| = k} \tau(\sca{x_\mu, x_\mu})
=
\sum_{i,j \in [n]} p_i (G^k)_{ij}.
\]
If $r = \max\{ i \in [n] \mid p_i \neq 0\}$ then 
\[
\log a_r
=
\lim_k \frac{1}{k} \log (p_r a_r^{k-1})
\leq
h_X^\tau
\leq
\lim_k \frac{1}{k} \log (r^2 a_r^{k+1}) 
=
\log a_r
\]
and so $h_X^\tau = \log a_r$.
Hence any trace supported on $\{v_1, \dots, v_r\}$ defines a state in $\Eq_\be^{\fty}(\T_X)$ as long as $\be > \log a_r$.
Moreover notice that $G^t$ admits the vectors $\{e_i \mid i=1, \dots, n+1\}$ from the o.n.\ basis of $\bR^{n+1}$ as eigenvectors for the eigenvalues $a_n, \dots, a_2, a_1, 1$.
Thus for every $\log a_i$ we get that the Dirac measure $\de_{v_i}$ on $v_i$ is in $\Avt_{\log a_i}(A)$ and so $\Eq_{\log a_i}^\infty(\T_X) \neq \mt$.
Moreover we have that $c_{\de_{v_0}, \be} = 1$ for all $\be >0$.
Thus we conclude:
\begin{enumerate}[leftmargin=30pt, itemindent=0pt, itemsep=1pt]
\item The Dirac measure $\de_{v_0}$ induces a state in $\Eq_\be^{\fty}(\T_X)$ for all $\be >0$; therefore we get that $h_X = 0 < \log a_n = h_X^s$.
\item If $j > 1$ then any convex combination of $\{\de_{v_{j'}} \mid j' < j\}$ induces a state in $\Eq_{\log a_j}^{\fty}(\T_X)$ and $\de_{v_j}$ induces a state in $\Eq_{\log a_j}^\infty(\T_X)$.
\end{enumerate}
In this example we have $\Avt_\be(A) \neq \mt$ for a finite number of $\be$, whereas $\Tr_\be(A) \neq \mt$ for all $\be > 0$.

As in Example \ref{E:inductive limit} the Dirac measure $\de_{v_0}$ induces a tracial state $\Psi(\de_{v_0})$ on $\O_X$ and on $\T_X$ through the direct limit process of Theorem \ref{T:para 0}, or as the weak*-limit $\Psi(\de_{v_0}) =\lim_{\be \to 0} \Phi_\be(\de_{v_0})$.

With a small tweak we can produce a variant $G'$ for which $\Eq_\be^{\fty}(\O_{X'}) \neq \mt$ for any $\be > 0$.
Here $X'$ refers to the graph C*-correspondence related to the graph $G'$ given by:
\begin{equation}\tag{G'}
\xymatrix@R=30pt@C=50pt{
\bullet_w \ar[rrr] & & & \bullet_{v_0} \ar@{->}_{e}@(ur,ul) & & \\
& \bullet_{v_1} \ar@{->}_{(a_1)}@(dl,dr) \ar[urr] & & \cdots & & \bullet_{v_n} \ar@{->}_{(a_n)}@(dl,dr) \ar[ull]
}
\end{equation}
Notice that all the entropies remain the same (there is only one path ending at $v_0$ of length $k$ that can be added).
If $\tau'$ is the Dirac measure on $w$ then $h_X^{\tau'} = 0$ and so $\tau' \in \Tr_\be(A)$ for all $\be >0$.
In light of Theorem \ref{T:para rel} we have that $\O_{X'}$ is the quotient of $\T_{X'}$ by $\K(\F(X') J)$ for $J = \ca(p_{v_0}, \dots, p_{v_n})$ and $\tau'|_J = 0$.
Therefore $\tau'$ induces a finite state on $\O_{X'}$ for all $\be > 0$.
\end{example}

\section{Examples from \cite{HLRS15}}\label{S:ex hlrs}

We square our analysis with the examples in \cite{HLRS15}.
The following refer to Examples 6.1--6.7, respectively.

\begin{example}
Let the graph
\[
\xymatrix@R=20pt@C=50pt{
\bullet_{v} \ar@{->}_{(2)}@(ur,ul) \ar[rr] & & \bullet_w \ar@{->}_{(3)}@(ur,ul)
}
\]
with adjacency matrix
\[
G = 
\begin{bmatrix}
2 & 1 \\
0 & 3
\end{bmatrix}
\]
with respect to the vertices $(w, v)$.
The components are $G_1 = [2]$ and $G_2 = [3]$ and so $h_X^s = \log 3$.
It has one sink component $G_2 = [3]$ and thus $h_X = \log 3$ as well.
We see that $G_1$ is not $2$-maximal, but $G_2$ is $3$-maximal.
Hence we have just one phase transition at $\log 3$.
Moreover $G_2^t$ has one eigenvector at $3$ given by $\de_w$.
By checking the communicating vertices, we have that
\[
h_X^\tau = \log 3 \foral \tau \in \Tr(A).
\]
Therefore for $\T_X$ we have 
\begin{itemize}[leftmargin=20pt, itemindent=0pt, itemsep=2pt]
\item For $\be > \log 3$ then $\Tr_\be(A) = \Tr(A) = \sca{\de_v, \de_w}$.
Thus $\Eq_\be(\T_X) = \Eq_\be^{\fty}(A)$ is a simplex of dimension one with extreme points $\Phi(\de_v)$ and $\Phi(\de_w)$.
\item For $\be = \log 3$ we have $\Tr_{\log 3}(A) = \mt$ and $\Avt_\be(A) =\{\de_v\}$.
Thus the KMS-simplex has dimension zero with one infinite state induced by $\de_v$.
\item For $\be < \log 3$ we have no KMS-states.
\end{itemize}
As $G$ has no sources we have that $\O_X =\O(A,X)$.
For $\O_X$ we thus have
\begin{itemize}[leftmargin=20pt, itemindent=0pt, itemsep=2pt]
\item For $\be = \log 3$ there is one infinite KMS-state induced by $\de_v$.
\item For $\be \neq \log 3$ we have no KMS-states.
\end{itemize}
\end{example}

\begin{example}
Let the graph
\[
\xymatrix@R=20pt@C=50pt{
\bullet_{v} \ar@{->}_{(2)}@(ur,ul) & & \bullet_w \ar@{->}_{(3)}@(ur,ul) \ar[ll]
}
\]
with adjacency matrix
\[
G = 
\begin{bmatrix}
3 & 1 \\
0 & 2
\end{bmatrix}
\]
with respect to the vertices $(w,v)$.
The graph has two components $G_1 = [3]$ and $G_2 = [2]$.
Thus $h_X^s = \log 3$.
The graph has one sink component $G_2 = [2]$ and so $h_X = \log 2$.
We see that $G_1$ is $3$-maximal and $G_2$ is $2$-maximal.
Hence we have two phase transitions at $\log 2$ and $\log 3$.
Moreover $H_1^t = G^t$ has an eigenvector at $3$ given by
\[
p = \frac{1}{2}(\de_w + \de_v)
\]
and $H_2^t = G_2^t$ has an eigenvector at $2$ given by $\de_v$.
By checking the communicating vertices, for $\tau \in \Tr(A)$ we have
\[
h_X^\tau = 
\begin{cases}
\log 2 & \textup{ if $\tau = \de_v$},\\
\log 3 & \textup{ otherwise}.
\end{cases}
\]
Therefore for $\T_X$ we have
\begin{itemize}[leftmargin=20pt, itemindent=0pt, itemsep=2pt]
\item For $\be > \log 3$ we have $\Tr_\be(A) = \Tr(A) = \sca{\de_v, \de_w}$.
Thus $\Eq_\be(\T_X) = \Eq_\be^{\fty}(A)$ is a simplex of dimension one with extreme points $\Phi(\de_v)$ and $\Phi(\de_w)$.
\item For $\be = \log 3$ we have $\Tr_{\log 3}(A) = \{\de_v\}$ and $\Avt_{\log 3}(A) = \{p'\}$.
So the KMS-simplex has dimension one.
\item For $\be \in (\log 2, \log 3)$ we have that $\Tr_\be(A) = \{\de_v\}$ and $\Avt_{\log 3}(A) = \mt$.
Hence we have only one finite KMS-state $\Phi(\de_v)$.
\item For $\be = \log 2$ we have $\Tr_{\log 2}(A) =\mt$ and $\Avt_{\log 2}(A) = \{\de_v\}$.
Hence we have one infinite KMS-state.
\end{itemize}
The graph has no sources and so $\O_X = \O(A,X)$ inherits the infinite KMS-states:
\begin{itemize}[leftmargin=20pt, itemindent=0pt, itemsep=2pt]
\item For $\be = \log 3$ there is one KMS-state induced by $p'$.
\item For $\be = \log 2$ there is one KMS-state induced by $\de_v$.
\item For $\be \neq \log 2, \log 3$ we have no KMS-states.
\end{itemize}
\end{example}

\begin{example}
Let the graph
\[
\xymatrix@R=20pt@C=50pt{
\bullet_{v} \ar@{->}_{(2)}@(ur,ul) & & \bullet_w \ar[ll] \ar@{->}^{(2)}@/^1pc/[rr] & & \bullet_u \ar@{->}_{(2)}@(ur,ul) \ar@{->}^{(2)}@/^1pc/[ll]
}
\]
with adjacency matrix and components
\[
G = 
\begin{bmatrix}
2 & 2 & 0 \\
2 & 0 & 1 \\
0 & 0 & 2
\end{bmatrix},
\quad
G_1
=
\begin{bmatrix}
2 & 2 \\
2 & 0 
\end{bmatrix},
\quad
G_2
=
\begin{bmatrix}
2
\end{bmatrix},
\]
with respect to the order $(u, w ,v)$.
We have $\la_{G_1} = 1 + \sqrt{5} =: \ga$ and $\la_{G_2} = 2$.
As $\ga > 2$ we have $h_X^s = \ga$.
We have one sink component, namely $G_2$, and so $h_X = \log 2$.
We see that $G_1$ is $\ga$-maximal and $G_2$ is $2$-maximal.
Hence we have two phase transitions at $\log \ga$ and $\log 2$.
Since $H_1 = G$ we isolate the eigenvector of $G^t$ at $\ga$ given by
\[
p = \frac{1}{\ga + 1} (2 \de_u + (\ga - 2) \de_w + \de_v),
\]
while $H_2^t = G_2^t$ has an eigenvector at $2$ given by $\de_v$.
By checking the communicating vertices, for $\tau \in \Tr(A)$ we have
\[
h_X^\tau = 
\begin{cases}
\log 2 & \textup{ if $\tau = \de_v$},\\
\log \ga & \textup{ otherwise}.
\end{cases}
\]
Therefore for $\T_X$ we have
\begin{itemize}[leftmargin=20pt, itemindent=0pt, itemsep=2pt]
\item For $\be > \log \ga$ we have $\Tr_\be(A) = \Tr(A) = \sca{\de_u, \de_w, \de_v}$.
Thus $\Eq_\be(\T_X) = \Eq_\be^{\fty}(A)$ is a simplex of dimension two with extreme points $\Phi(\de_u)$, $\Phi(\de_w)$ and $\Phi(\de_v)$.
\item For $\be = \log \ga$ we have $\Tr_{\log \ga}(A) = \{\de_v\}$ and $\Avt_{\log \ga}(A) = \{p\}$.
So the KMS-simplex has dimension two with one finite state $\Phi(\de_v)$ and one infinite state $\Psi(p)$.
\item For $\be \in (\log 2, \log \ga)$ we have $\Tr_\be(A) = \{\de_v\}$ and $\Avt_\be(A) =\mt$.
So the KMS-simplex has only one finite state $\Phi(\de_v)$.
\item For $\be = \log 2$ we have $\Tr_{\log 2}(A) = \mt$ and $\Avt_{\log 2}(A) = \{\de_v\}$.
So the KMS-simplex has only one infinite state $\Psi(\de_v)$.
\end{itemize}
As we have no sources $\O_X = \O(A,X)$. Thus $\O_X$ inherits just the infinite-type states.
That is
\begin{itemize}[leftmargin=20pt, itemindent=0pt, itemsep=2pt]
\item For $\be = \log \ga$ it has one KMS-state induced from $p$.
\item For $\be = \log 2$ it has one KMS-state induced from $\de_v$.
\item For $\be \neq \log \ga, \log 2$ it has no KMS-states.
\end{itemize}
\end{example}

\begin{example}
Let the graph
\[
\xymatrix@R=20pt@C=50pt{
\bullet_{u_1} \ar[rr] & & \bullet_{v} \ar@{->}_{(2)}@(ur,ul) & & \bullet_{u_2} \ar[ll] & & \bullet_{w} \ar[ll] \ar@{->}_{(3)}@(ur,ul)
}
\]
with adjacency matrix and components
\[
G = 
\begin{bmatrix}
3 & 1 & 0 & 0 \\
0 & 0 & 0 & 1 \\
0 & 0 & 0 & 1 \\
0 & 0 & 0 & 2
\end{bmatrix},
\quad
G_1
=
\begin{bmatrix}
3
\end{bmatrix},
\quad
G_2 = G_3 
=
\begin{bmatrix}
0
\end{bmatrix},
\quad
G_4
=
\begin{bmatrix}
2
\end{bmatrix},
\]
with respect to the order $(w, u_2, u_1, v)$.
We have $\la_{G_1} = 3$, $\la_{G_2} = \la_{G_3} = 0$ and $\la_{G_4} = 2$.
Therefore $h_X^s = \log 3$.
We have one sink component $G_4$ and so $h_X = \log 2$.
We see that $G_1$ is $3$-maximal and $G_4$ is $2$-maximal.
Hence we have two phase transitions at $\log 3$ and $\log 2$.
Since $H_1 = G$ we isolate the eigenvector of $G^t$ at $3$ given by
\[
p = \frac{1}{5}(3 \de_w + \de_{u_1} + \de_v),  
\]
and the eigenvector of $H_4^t = G_4^t$ at $2$ given by $\de_{v}$.
By checking the communicating vertices, for $\tau \in \Tr(A)$ we have
\[
h_X^\tau = 
\begin{cases}
\log 3 & \textup{ if $w \in \supp \tau$},\\
\log 2 & \textup{ otherwise}.
\end{cases}
\]
Therefore for $\T_X$ we have
\begin{itemize}[leftmargin=20pt, itemindent=0pt, itemsep=2pt]
\item For $\be > \log 3$ we have $\Tr_\be(A) = \Tr(A)$.
Thus $\Eq_\be(\T_X) = \Eq_\be^{\fty}(A)$ is a simplex of dimension three with extreme points $\Phi(\de_w)$, $\Phi(\de_{u_2})$, $\Phi(\de_{u_1})$ and $\Phi(\de_v)$.
\item For $\be = \log 3$ we have $\Tr_{\log 3}(A) = \sca{\de_{u_1}, \de_{u_2}, \de_v}$ and $\Avt_{\log 3}(A) = \{ p \}$.
So the KMS-simplex has dimension 3, with extreme points at the finite $\Phi(\de_{u_1})$, $\Phi(\de_{u_2})$, $\Phi(\de_{v})$ and at the infinite $\Psi(p)$.
\item For $\be \in (\log 2, \log 3)$ we have that $\Tr_\be(A) = \sca{\de_{u_1}, \de_{u_2}, \de_v}$ and $\Avt_\be(A) = \mt$.
So the KMS-simplex has dimension 2 with only finite states. 
\item For $\be = \log 2$ we have $\Tr_{\log 2}(A) = \mt$ and $\Avt_{\log 2}(A) = \{\de_v\}$.
So the KMS-simplex has zero dimension with one infinite state $\Psi(\de_v)$.
\item For $\be < \log 2$ we have no KMS-states.
\end{itemize}
Here we have one source so we need to be careful with the states that are inherited by $\O_X \neq \O(A,X)$.
For $\Tr_\be(A)$ we need to intersect with traces that are annihilated at $\{w, u_2, v\}$.
On the other hand we inherit the whole $\Avt_\be(A)$ as $A \hookrightarrow \O_X$.
Hence we have
\begin{itemize}[leftmargin=20pt, itemindent=0pt, itemsep=2pt]
\item For $\be > \log 3$ we have a single finite state induced by $\de_{u_1}$ and no infinite states.
\item For $\be = \log 3$ we have one finite state induced by $\de_{u_1}$ and one infinite state induced by $p$.
\item For $\be \in (\log 2, \log 3)$ we have a single finite state induced by $\de_{u_1}$ and no infinite states.
\item For $\be = \log 2$ we have a single infinite state induced by $\de_{u_1}$ and no finite states.
\item For $\be < \log 2$ we have no KMS-states.
\end{itemize}
\end{example}

\begin{example}
Let the graph
\[
\xymatrix@R=20pt@C=50pt{
\bullet_{u_1} & & \bullet_{v} \ar@{->}_{(2)}@(ur,ul) \ar[ll] & & \bullet_{u_2} \ar[ll] & & \bullet_{w} \ar[ll] \ar@{->}_{(3)}@(ur,ul)
}
\]
with adjacency matrix and components
\[
G = 
\begin{bmatrix}
3 & 1 & 0 & 0 \\
0 & 0 & 1 & 0 \\
0 & 0 & 2 & 1 \\
0 & 0 & 0 & 0
\end{bmatrix},
\quad
G_1
=
\begin{bmatrix}
3
\end{bmatrix},
\quad
G_2  
=
\begin{bmatrix}
0
\end{bmatrix},
\quad
G_3
=
\begin{bmatrix}
2 
\end{bmatrix},
\quad
G_4
=
\begin{bmatrix}
0
\end{bmatrix}
\]
with respect to the order $(w, u_2, v, u_1)$.
We have $\la_{G_1} = 3$, $\la_{G_2} = 0$, $\la_{G_3} = 2$ and $\la_{G_4} = 0$.
Therefore $h_X^s = \log 3$.
We have one sink component $G_4$ and so $h_X = 0$.
We see that $G_1$ is $3$-maximal, $G_3$ is $2$-maximal and $G_4$ is $0$-maximal.
Hence we have three phase transitions at $0$, $\log 2$ and $\log 3$.
For $G_1$ we have $H_1^t = G^t$ and we isolate the eigenvector at $3$ given by
\[
p = \frac{1}{16}(9 \de_w + 3 \de_{u_2} + 3 \de_{u_1} + \de_v).
\]
For $G_3$ we have an eigenvector at $2$ from
\[
H_3^t = \begin{bmatrix} 2 & 0 \\ 1 & 1 \end{bmatrix}
\quad
\textup{ with }
\quad
p' = \frac{1}{2} ( \de_v + \de_{u_1}).
\]
We also have the eigenvector $\de_{u_1}$ for $H_4^t = G_4^t$ at $0$.
By checking the communicating vertices, for $\tau \in \Tr(A)$ we have
\[
h_X^\tau = 
\begin{cases}
\log 3 & \textup{ if $w \in \supp \tau$},\\
0 & \textup{ if $\tau = \de_{u_1}$},\\
\log 2 & \textup{ otherwise}.
\end{cases}
\]
Therefore for $\T_X$ we have
\begin{itemize}[leftmargin=20pt, itemindent=0pt, itemsep=2pt]
\item For $\be > \log 3$ we have $\Tr_\be(A) = \Tr(A)$.
Thus $\Eq_\be(\T_X) = \Eq_\be^{\fty}(A)$ is a simplex of dimension three with extreme points $\Phi(\de_w)$, $\Phi(\de_{u_2})$, $\Phi(\de_v)$ and $\Phi(\de_{u_1})$.
\item For $\be = \log 3$ we have $\Tr_{\log 3}(A) = \sca{\de_{u_2}, \de_v, \de_{u_1}}$ and $\Avt_{\log 3}(A) = \{ p \}$.
So the KMS-simplex has dimension 3, with extreme points at the finite $\Phi(\de_{u_2})$, $\Phi(\de_v)$, $\Phi(\de_{u_1})$ and at the infinite $\Psi(p)$.
\item For $\be \in (\log 2, \log 3)$ we have that $\Tr_\be(A) = \sca{\de_{u_2}, \de_{v}, \de_{u_1}}$ and $\Avt_\be(A) = \mt$.
So the KMS-simplex has dimension 2 with only finite states. 
\item For $\be = \log 2$ we have $\Tr_{\log 2}(A) = \{\de_{u_1}\}$ and $\Avt_{\log 2}(A) = \{p'\}$.
So the KMS-simplex has dimension one with one extreme finite state $\Phi(\de_{u_1})$ and one extreme infinite state $\Psi(p')$.
\item For $\be \in (0, \log 2)$ we have $\Tr_\be(A) = \{\de_{u_1}\}$ and $\Avt_\be(A) = \mt$.
So the KMS-simplex has zero dimension with one finite state at $\Phi(\de_{u_1})$.
\item For $\be = 0$ we get a tracial state induced by $\Phi(\de_{u_1})$ (note that $\de_{u_1}$ does not satisfy the averaging condition).
\end{itemize}
Here we have no sources so $\O_X = \O(A,X)$ and inherits only the infinite type states.
Hence we have
\begin{itemize}[leftmargin=20pt, itemindent=0pt, itemsep=2pt]
\item For $\be = \log 3$ we have one infinite state induced by $p$.
\item For $\be = \log 2$ we have one infinite state induced by $p$.
\item For $\be \neq \log 2, \log 3$ we have no KMS-states.
\end{itemize}
\end{example}

\begin{example}
In this example one source is added to the previous graph to give 
\[
\xymatrix@R=20pt@C=50pt{
\bullet_{u_3} \ar[r] & \bullet_{u_1} & \bullet_{v} \ar@{->}_{(2)}@(ur,ul) \ar[l] & \bullet_{u_2} \ar[l] & \bullet_{w} \ar[l] \ar@{->}_{(3)}@(ur,ul)
}
\]
The only change with the previous examplehere KMS-simplex by a finite state.
Indeed $u_3$ communicates just with $u_1$ which is a sink, and so the tracial entropy of $\de_{u_3}$ is zero.
As we have a source, then $\O_X \neq \O(A,X)$, and in particular $\Phi(\de_{u_3})$ descends to a finite state of $\O_X$ at every $\be \in [0, +\infty)$.
\end{example}

\begin{example}
In this example we consider the graph
\[
\xymatrix@R=20pt@C=50pt{
& & \bullet_{v} \ar@{->}_{(2)}@(ur,ul) \ar[dll] & & \\
\bullet_{u} \ar@{->}@(ur,ul) & & & & \bullet_{x} \ar@{->}_{(2)}@(ur,ul) \ar[ull] \ar[dll]\\
& & \bullet_{w} \ar@{->}_{(2)}@(ur,ul) \ar[ull] & & 
}
\]
with adjacency matrix and components
\[
G = 
\begin{bmatrix}
2 & 1 & 1 & 0 \\
0 & 2 & 0 & 1 \\
0 & 0 & 2 & 1 \\
0 & 0 & 0 & 1
\end{bmatrix},
\quad
G_1 = G_2 = G_3
=
\begin{bmatrix}
2
\end{bmatrix},
\quad
G_4
=
\begin{bmatrix}
1
\end{bmatrix},
\]
with respect to the order $(x,v,w,u)$.
We have $\la_{G_1} = \la_{G_2} = \la_{G_3} = 2$ and $\la_{G_4} = 1$.
Therefore $h_X^s = \log 2$.
We have one sink component $G_4$ and so $h_X = 0$.
We see that the components $G_1, G_2, G_3$ are $2$-maximal and $G_4$ is $1$-maximal.
Hence we have two phase transitions at $0$ and $\log 2$.
Here we need to consider the eigenvectors for all $H_1^t, H_2^t, H_3^t, H_4^t$.
For $H_1^t = G^t$ we have two eigenvectors at $2$, namely $[0, \; 1, \; 0, \; 1]$ and $[0, \; -1, \; 1, \; 0]$, from which we isolate only the positive one.
In particular we see that for $G_2$ we obtain the ``same'' eigenvector at $2$ from
\[
H_2^t
=
\begin{bmatrix}
2 & 0 & 0 \\
0 & 2 & 0 \\
1 & 1 & 1
\end{bmatrix}
\quad
\textup{ with } 
\quad
p = \frac{1}{2}( \de_v + \de_u ).
\]
For $G_3$ we have one eigenvector $p'$ at $2$ from
\[
H_3^t
=
\begin{bmatrix}
2 & 0 \\
1 & 1 
\end{bmatrix}
\quad
\textup{ with } 
\quad
p' = \frac{1}{2}(\de_w + \de_u).
\]
Finally for $G_4$ we have the eigenvector $\de_u$ at $2$.
By checking the communicating vertices, for a trace $\tau$ we have
\[
h_X^\tau
=
\begin{cases}
0 & \textup{ if $\tau = \de_u$},\\
\log 2 & \textup{ otherwise}.
\end{cases}
\]
Therefore for $\T_X$ we have
\begin{itemize}[leftmargin=20pt, itemindent=0pt, itemsep=2pt]
\item For $\be > \log 2$ we have $\Tr_\be(A) = \Tr(A)$.
Thus $\Eq_\be(\T_X) = \Eq_\be^{\fty}(A)$ is a simplex of dimension three with extreme points $\Phi(\de_x)$, $\Phi(\de_{v})$, $\Phi(\de_w)$ and $\Phi(\de_{u})$.
\item For $\be = \log 2$ we have $\Tr_{\log 2}(A) = \{\de_u\}$ and $\Avt_{\log 2}(A) = \sca{p,  p'}$.
So the KMS-simplex has dimension two, with extreme points at the finite $\Phi(\de_{u})$ and the infinite $\Psi(p)$ and $\Psi(p')$.
\item For $\be \in (0, \log 2)$ we have that $\Tr_\be(A) = \{\de_u\}$ and $\Avt_\be(A) = \mt$.
So the KMS-simplex has dimension zero with a single finite state $\Phi(\de_u)$. 
\item For $\be = 0$ we get a tracial state induced by $\Psi(\de_{u})$.
\end{itemize}
Here we have no sources so $\O_X = \O(A,X)$, and so $\O_X$ inherits only the infinite type states.
Hence we have
\begin{itemize}[leftmargin=20pt, itemindent=0pt, itemsep=2pt]
\item For $\be = \log 2$ the KMS-simplex has dimension two with extreme points induced by $\{p, p'\}$.
\item For $\be = 0$ we have a tracial state induced by $\de_v$.
\item For $\be \neq 0, \log 2$ we have no KMS-states.
\end{itemize}
\end{example}

\section{Ground states and KMS${}_\infty$-states}

We follow \cite{LacRae10} and make a distinction between KMS${}_\infty$-states and ground states.
The following theorems make that difference clear.
The form of the ground states has been identified in \cite[Theorem 2.2]{LacNes04}.

\begin{theorem}\label{T:ground}
Let $X$ be a C*-correspondence of finite rank over $A$.
Then there exists an affine weak*-homeomoprhism $\tau \mapsto \vphi_\tau$ between the states $\tau \in \S(A)$ (resp. the tracial states $\tau \in \Tr(A)$) and the ground states (resp. the KMS${}_\infty$-states) of $\T_X$ such that
\begin{equation}\label{eq:ground}
\vphi_\tau(\pi(a)) = \tau(a) \foral a \in A
\qand
\vphi_\tau(t(\xi^{\otimes n}) t(\eta^{\otimes m})^*) = 0 \text{ when } n + m \neq 0.
\end{equation}
\end{theorem}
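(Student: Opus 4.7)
The strategy is to construct $\tau \mapsto \vphi_\tau$ explicitly as a vector state on the Fock representation and to run matched existence/uniqueness arguments for the ground-state and the KMS${}_\infty$-state parametrizations. By Proposition~\ref{P:unital} I may assume $X$ is unital. For $\tau \in \S(A)$ with GNS data $(H_\tau, x_\tau, \rho_\tau)$, let $\vphi_\tau$ be the vector state at $1_A \otimes x_\tau$ of the induced representation $(\pi \otimes I_{H_\tau}, t \otimes I_{H_\tau})$ of $\T_X$ on $\F X \otimes_{\rho_\tau} H_\tau$. Formula~\eqref{eq:ground} is then immediate, since $t(\eta^{\otimes m})^*(1_A \otimes x_\tau) = 0$ for $m \geq 1$ and the subspaces $X^{\otimes n}\otimes_{\rho_\tau} H_\tau$ with $n \geq 1$ are orthogonal to $A\otimes_{\rho_\tau} H_\tau$.

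To see $\vphi_\tau$ is a ground state, for elementary analytic $f = t(\xi^{\otimes n_1}) t(\eta^{\otimes m_1})^*$ and $g = t(\xi'^{\otimes n_2}) t(\eta'^{\otimes m_2})^*$ the product $fg$ collapses to a single elementary product whose $\vphi_\tau$-value, by~\eqref{eq:ground}, is nonzero only when $n_1 = m_2 = 0$ and $m_1 = n_2$; in that surviving case $\si_z(g) = e^{n_2 iz}g$ with $|e^{n_2 iz}| \leq 1$ on $\{\im z \geq 0\}$, so $z \mapsto \vphi_\tau(f\si_z(g))$ is bounded. Conversely, let $\vphi$ be a ground state: testing with $f = 1$ and $g = t(\xi^{\otimes n})t(\eta^{\otimes m})^*$ with $m > n$ gives $\vphi(\si_z(g)) = e^{(n-m)iz}\vphi(g)$, and $|e^{(n-m)iz}| = e^{(m-n)\im z}$ blows up along the upper half plane, forcing $\vphi(g) = 0$; taking adjoints kills $n > m$ as well. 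For diagonal $(n,n)$-products with $n \geq 1$ I pick $f = t(\xi^{\otimes n})$ and $g = t(\eta^{\otimes n})^*$: then $\vphi(f\si_z(g)) = e^{-niz}\vphi(t(\xi^{\otimes n})t(\eta^{\otimes n})^*)$, and $|e^{-niz}| = e^{n\im z}\to\infty$ on the upper half plane forces this value to vanish. Hence $\vphi = \vphi_{\vphi\circ\pi}$.

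For KMS${}_\infty$, Proposition~\ref{P:hx}(iii) gives $\Tr(A) = \Tr_\be(A)$ for $\be > h_X^s$, so Theorem~\ref{T:para} supplies finite-type KMS states $\Phi_\be(\tau)$ for every $\tau \in \Tr(A)$. Letting $\be \to \infty$ in formula~\eqref{eq:kms 3}, one has $c_{\tau,\be} \to 1$ (only the $k=0$ summand survives in the defining series), each $(n,n)$-product value with $n \geq 1$ is killed by the factor $e^{-n\be}$, and the $n = 0$ case returns $\tau(a)$; thus $\Phi_\be(\tau) \stackrel{\textup{w*}}{\longrightarrow} \vphi_\tau$, exhibiting $\vphi_\tau$ as a KMS${}_\infty$-state. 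Conversely, for $\vphi = \lim_j \vphi_{\be_j}$ with $\be_j \to \infty$, Proposition~\ref{P:char} gives vanishing on non-diagonal products, and the KMS bound $|\vphi_{\be_j}(t(\xi^{\otimes n})t(\eta^{\otimes n})^*)| \leq e^{-n\be_j}\|\xi^{\otimes n}\|\|\eta^{\otimes n}\| \to 0$ takes care of diagonal products with $n \geq 1$ in the limit; the restriction $\tau := \vphi\circ\pi$ is tracial as a weak*-limit of tracial restrictions. Affine-ness is immediate from~\eqref{eq:ground}; the inverse $\vphi \mapsto \vphi\circ\pi$ is visibly weak*-continuous, while $\tau \mapsto \vphi_\tau$ is weak*-continuous because $\vphi_\tau$ is determined on the dense $*$-subalgebra of elementary products by the values $\tau(a) = \vphi_\tau(\pi(a))$ plus uniform boundedness; weak*-compactness of both sides then promotes the continuous bijections to weak*-homeomorphisms.

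The most delicate step is the vanishing of a ground state on diagonal $(n,n)$-products with $n \geq 1$: the infinitesimal characterization $-i\vphi(f^*\de f) \geq 0$ is vacuous here since $\de(t(\xi^{\otimes n})t(\eta^{\otimes n})^*) = 0$, so the argument must split the diagonal product into a creation $t(\xi^{\otimes n})$ and an annihilation $t(\eta^{\otimes n})^*$ with opposite $\si$-weights, and exploit the unboundedness of $e^{-niz}$ on the upper half plane. This is the trick that forces the whole parametrization to work.
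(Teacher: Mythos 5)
Your proposal is correct and follows essentially the same route as the paper: the vector state at $1_A \otimes x_\tau$ on the induced Fock representation, the characterization of ground states by forcing $\vphi(t(\xi^{\otimes n})t(\eta^{\otimes m})^*)=0$ via the unboundedness of $z \mapsto \si_z(t(\eta^{\otimes m})^*)$ on the upper half plane (the paper handles all $m\neq 0$, including the diagonal case, with the single pair $f=t(\xi^{\otimes n})$, $g=t(\eta^{\otimes m})^*$, which is exactly your ``split the diagonal product'' trick), and the realization of each $\vphi_\tau$ as the weak*-limit of the finite-type states $\Phi_\be(\tau)$ as $\be\uparrow\infty$ using $c_{\tau,\be}\to 1$. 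The only cosmetic difference is that the paper runs the KMS${}_\infty$ surjectivity by starting from a given KMS${}_\infty$-state and re-approximating along a sequence $\be_j$ with the explicit tail estimate in terms of $h_X^\tau+\eps$, whereas you invoke the uniform bound $\sum_{|\mu|=k}\tau(\sca{x_\mu,x_\mu})\le d^k$; both are valid.
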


\begin{proof}
For a state $\tau \in \S(A)$ consider the GNS-representa\-tion $(H_\tau, x_\tau, \rho_\tau)$.
Let again $(\rho, v)$ be the induced representation of $\T_X$ on $H = \F X \otimes_{\rho_\tau} H_\tau$ and let $\vphi_\tau$ be the vector state given by
\[
\vphi_\tau(f) := \sca{x_0 \otimes x_\tau, (\rho \times v)(f) x_0 \otimes x_\tau}_{H} = \tau(p_0 f p_0).
\]
It is immediate that $\vphi_\tau$ satisfies the conditions of the statement.
This also shows that the map $\tau \mapsto \vphi_\tau$ is injective.

Next we show that equation (\ref{eq:ground}) characterizes the ground states for $\tau \in \S(A)$.
Then surjectivity follows by noting that if $\vphi$ is a ground state of $\T_X$ then $\vphi = \vphi_\tau$ for $\tau = \vphi \pi$.
Let $\vphi$ be a ground state and let $m \neq 0$.
Then the function
\[
r + i s \mapsto \vphi(t(\xi^{\otimes n}) \si_{r + is}(t(\eta^{\otimes m})^*)) = e^{-i m r} e^{m s} \vphi(t(\xi^{\otimes n}) t(\eta^{\otimes m})^*)
\]
has to be bounded for all $s > 0$.
This can happen only if $\vphi(t(\xi^{\otimes n}) t(\eta^{\otimes m})^*) = 0$.
Now if $m = 0$ and $n \neq 0$ then we get that $\vphi(t(\xi^{\otimes n})) = 0$ by taking adjoints.
In any case
\[
\vphi(t(\xi^{\otimes n}) t(\eta^{\otimes m})^*) = 0 \textup{ when } n + m \neq 0.
\]
Since $\si_z = \id$ on $\pi(a)$ we also get that $\vphi \pi \in \S(A)$ and so $\vphi$ satisfies equation (\ref{eq:ground}).
Conversely suppose that $\vphi$ satisfies equation (\ref{eq:ground}).
We have to show that, for any pair
\[
f = t(\xi^{\otimes n}) t(\eta^{\otimes m})^* 
\qand
g = t(\ze^{\otimes k}) t(y^{\otimes l})^*,
\]
the function $r + is \mapsto \vphi(f \si_{r + i s}(g))$ is bounded when $t > 0$.
Indeed we have that
\begin{align*}
|\vphi(f \si_{r + i s}(g))|^2 = e^{-(k - \ell) 2s} |\vphi(f g)|^2
\leq e^{-(k - \ell)2s} \vphi(f^*f) \vphi(g^*g).
\end{align*}
This is clearly bounded when $k - l \geq 0$.
Now if $k - \ell < 0$ then $l > 0$ and so
\[
\vphi(g^* g) = \vphi(t(y^{\otimes \ell} \langle \ze^{\otimes k}, \ze^{\otimes k} \rangle) t(y^{\otimes \ell})^*) = 0
\]
and thus $\vphi(f \si_{r + i s}(g)) = 0$, which completes the proof.

Now we pass to the KMS${}_\infty$-states.
Suppose that $\vphi$ is a KMS${}_\infty$-state.
Due to weak*-compactness (and after passing to subsequences), we may choose a sequence $\be_j \uparrow \infty$ such that $\text{w*-}\lim_j \vphi_{\tau, \be_j}$ converges to a KMS${}_\infty$-state $\vphi$.
Then $\vphi|_{\pi(A)}$ is tracial and when $n + m \neq 0$ then
\[
\vphi(t(\xi^{\otimes n}) t(\eta^{\otimes m}))
=
\lim_{\be_j \to \infty} e^{-\be_j n} \de_{n,m} \vphi_{\tau, \be_j} (t(\eta^{\otimes m})^* t(\xi^{\otimes n}))
=
0,
\]
so that $\vphi$ satisfies equation (\ref{eq:ground}).
For surjectivity let $\vphi$ be a KMS${}_\infty$-state and set $\tau = \vphi \pi$.
Let $\be_j \uparrow \infty$ and without loss of generality assume that $\be_j > h_X^\tau$ for all $j$.
Then we can form $\vphi_{\tau, \be_j} \in \Eq_{\be_j}^{\fty}(\T_X)$ arising from Theorem \ref{T:para}.
After passing to a subsequence let $\vphi_\tau = \text{w*-}\lim_j \vphi_{\tau, \be_j}$.
We will show that $\vphi = \vphi_\tau$.
For $n + m \neq 0$ we have that
\begin{align*}
\vphi(t(\xi^{\otimes n}) t(\eta^{\otimes m})^*)
& = 
0
=
\vphi_\tau(t(\xi^{\otimes n}) t(\eta^{\otimes m})^*).
\end{align*}
Hence it suffices to show that $\vphi_\tau \pi = \tau$.
Fix a unit decomposition $x = \{x_1, \dots, x_d\}$.
Then for $a \in A$ we have
\begin{equation}\label{eq:infty}
\vphi_{\tau, \be_j}(\pi(a))
=
c_{\tau, \be_j}^{-1} \tau(a) + c_{\tau, \be_j}^{-1} \sum_{k=1}^\infty e^{-k\be_j} \sum_{|\mu| = k} \tau(\sca{x_\mu, a x_\mu}).
\end{equation}
Take $\eps > 0$ so that $h_X^\tau + \eps < \be_1 \leq \be_j$.
Then there exists an $N \in \bN$ such that $\sum_{|\mu| = k} \tau(\sca{x_\mu,x_\mu}) \leq e^{k(h_X^\tau + \eps)}$ for all $k \geq N$.
Therefore we get that
\begin{align*}
1 
& \leq 
c_{\tau, \be_j}
 \leq 
1 + \sum_{k=1}^{N-1} e^{-k\be_j} \sum_{|\mu| = k} \tau(\sca{x_\mu,x_\mu})
+
\sum_{k= N}^{\infty} e^{-k \be_j} e^{k(h_X^\tau +\eps)} \\
& =
1 
+
e^{N(-\be_j + h_X^\tau + \eps)} \frac{1}{1 - e^{-\be_j + h_X^\tau + \eps}}
+ 
\sum_{k=1}^{N-1} e^{-k\be_j} \sum_{|\mu| = k} \tau(\sca{x_\mu,x_\mu}).
\end{align*}
However we have that
\[
\lim_{\be_j \to \infty} 
\left[e^{N(-\be_j + h_X^\tau + \eps)} \frac{1}{1 - e^{-\be_j + h_X^\tau + \eps}}
+
\sum_{k=1}^{N-1} e^{-k\be_j} \sum_{|\mu| = k} \tau(\sca{x_\mu,x_\mu})\right]
=
0
\]
which gives $\lim_{\be_j \to \infty} c_{\tau, \be_j} = 1$.
Combining with positivity of $\tau$ we also derive that
\begin{align*}
| \sum_{k=1}^\infty e^{-k \be_j} \sum_{|\mu| = k} \tau(\sca{x_\mu, a x_\mu})|
& \leq \\
& \hspace{-2cm} \leq
\nor{a} \sum_{k=1}^\infty e^{-k\be_j} \sum_{|\mu| = k} \tau(\sca{x_\mu, x_\mu}) \\
& \hspace{-2cm} \leq
\nor{a} \left[ e^{N(-\be_j + h_X^\tau + \eps)} \frac{1}{1 - e^{-\be_j + h_X^\tau + \eps}} + \sum_{k=1}^{N-1} e^{-k\be_j} \sum_{|\mu| = k} \tau(\sca{x_\mu,x_\mu}) \right]
\stackrel{\be_j \to \infty}{\longrightarrow} 0.
\end{align*}
Thus taking limits $\be_j \uparrow \infty$ in equation (\ref{eq:infty}) we conclude the required $\vphi(\pi(a))= \tau(a)$.
\end{proof}

Finally we have the analogues for the ground states and the KMS${}_\infty$-states for $J$-relative Cuntz-Pimsner algebras.

\begin{theorem}\label{T:infty rel}
Let $X$ be a C*-correspondence of finite rank over $A$.
Suppose that $J \subseteq \phi_X^{-1}(\K X)$.
Then the mapping $\tau \mapsto \vphi_\tau$ for
\[
\vphi_\tau(\rho(a)) = \tau(a) \foral a \in A
\qand
\vphi_\tau(v(\xi^{\otimes n}) v(\eta^{\otimes m})^*) = 0 \text{ when } n+m \neq 0
\]
defines an affine weak*-homeomorphism from the states on $A$ (resp. from the tracial states on $A$) that vanish on $J$ onto the ground states of $\O(J,X)$ (resp. onto the KMS${}_\infty$-states) of $\O(J,X)$.
\end{theorem}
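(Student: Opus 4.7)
The plan is to deduce Theorem \ref{T:infty rel} from Theorem \ref{T:ground} by passing through the canonical quotient $q_J \colon \T_X \to \O(J,X)$ and checking that the vanishing condition on $J$ corresponds exactly to factorability through $q_J$. Concretely, given $\tau \in \S(A)$ (resp. $\tau \in \Tr(A)$) with $\tau|_J = 0$, I would first produce the associated ground state (resp. KMS${}_\infty$-state) $\wt\vphi_\tau$ of $\T_X$ from Theorem \ref{T:ground}, and then show that $\wt\vphi_\tau$ descends to a state $\vphi_\tau$ of $\O(J,X)$ satisfying the formulas of the statement.

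The key verification is that $\wt\vphi_\tau$ annihilates the ideal $\ker q_J$, which is generated by the elements $\pi(a) - \psi_t(\phi_X(a))$ for $a \in J$. Since $\phi_X(a) \in \K X$ for $a \in J \subseteq \phi_X^{-1}(\K X)$, we can approximate $\psi_t(\phi_X(a))$ in norm by finite sums $\sum t(\xi_i) t(\eta_i)^*$ with $\xi_i, \eta_i \in X$; each such term satisfies $n + m = 2 \neq 0$, so $\wt\vphi_\tau$ vanishes on $\psi_t(\phi_X(a))$ by equation (\ref{eq:ground}). On the other hand $\wt\vphi_\tau(\pi(a)) = \tau(a) = 0$ for $a \in J$ by hypothesis, so $\wt\vphi_\tau$ vanishes on the generators, hence on the entire ideal $\ker q_J$ (using the KMS/ground bounds together with the gauge-invariance to control products against the generators). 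Thus $\vphi_\tau := \wt\vphi_\tau \circ q_J^{-1}$ is well defined on $\O(J,X)$, and as the gauge actions intertwine under $q_J$, the induced $\vphi_\tau$ inherits the ground/KMS${}_\infty$ property.

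For the reverse direction, if $\vphi$ is a ground state (resp. KMS${}_\infty$-state) of $\O(J,X)$, then $\wt\vphi := \vphi \circ q_J$ is a ground state (resp. KMS${}_\infty$-state) of $\T_X$, and Theorem \ref{T:ground} produces a unique $\tau = \wt\vphi \circ \pi \in \S(A)$ (resp.\ in $\Tr(A)$) representing it. The fact that $\tau$ vanishes on $J$ follows from $J$-covariance: for $a \in J$ we have $\wt\vphi(\pi(a)) = \wt\vphi(\psi_t(\phi_X(a)))$ in $\O(J,X)$, and the right-hand side is zero by the same argument as above applied to $\wt\vphi$. Injectivity and surjectivity of $\tau \mapsto \vphi_\tau$ follow from the analogous statements in Theorem \ref{T:ground}.

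The affine property is immediate from the linearity of the formulas, and the weak*-continuity in both directions is transparent: in one direction $\tau \mapsto \vphi_\tau$ is determined by the values $\vphi_\tau(v(\xi^{\otimes n}) v(\eta^{\otimes m})^*)$, which depend weak*-continuously on $\tau$; in the other direction $\vphi \mapsto \vphi \circ \rho$ is obviously weak*-continuous. Weak*-compactness of the ground states and KMS${}_\infty$-states on one side, together with Hausdorffness on the other, then upgrades the bijective continuous maps to homeomorphisms. The main (and essentially only) obstacle is the descent step: verifying that the vanishing on $J$ condition is equivalent to factorability through $q_J$, which reduces cleanly to the ground-state/KMS${}_\infty$ identity (\ref{eq:ground}) because $\phi_X(J) \subseteq \K X$ forces the $J$-covariance relations into the off-diagonal part of the Fock grading where $\wt\vphi_\tau$ already vanishes.
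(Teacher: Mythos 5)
Your overall strategy --- deducing the relative case from Theorem \ref{T:ground} by factoring through the canonical quotient $q_J \colon \T_X \to \O(J,X)$ --- is the natural one and is evidently what the paper intends, since Theorem \ref{T:infty rel} is stated without proof immediately after Theorem \ref{T:ground}. The one step that is genuinely under-justified as written is the descent: a state that vanishes on a set of generators of an ideal need not vanish on the ideal itself, and your parenthetical appeal to ``the KMS/ground bounds together with the gauge-invariance to control products against the generators'' does not by itself close this gap. It does close cleanly once you use the concrete form of the objects involved. For $a \in J$ one has $\psi_t(\phi_X(a)) = \pi(a)(1-p_0)$ on the Fock space, so the generators of $\ker q_J$ are exactly the corner elements $\pi(a) - \psi_t(\phi_X(a)) = p_0\pi(a)p_0$ (this is recorded in the proof of Theorem \ref{T:para rel}); moreover the proof of Theorem \ref{T:ground} shows that every ground state of $\T_X$ is of the form $f \mapsto \tau(\pi^{-1}(p_0 f p_0))$, and the KMS${}_\infty$-states are those among them with $\tau$ tracial. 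Since $p_0 \T_X p_0 = \pi(A)p_0$, a general element $y\,(p_0\pi(a)p_0)\,z$ of the ideal satisfies $\wt\vphi_\tau(y\, p_0\pi(a)p_0\, z) = \tau(bac)$ with $b = \pi^{-1}(p_0 y p_0)$ and $c = \pi^{-1}(p_0 z p_0)$ in $A$, and $bac \in J$ because $J$ is an ideal; hence $\wt\vphi_\tau$ annihilates all of $\ker q_J$ when $\tau|_J = 0$, not merely its generators. With this substitution your argument is complete: the reverse direction (recovering $\tau|_J = 0$ from $J$-covariance because $\vphi(\psi_v(\phi_X(a))) = 0$ for ground states), the affine property, and the compactness upgrade to a weak*-homeomorphism are all correct as written. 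For the KMS${}_\infty$ case you should additionally observe that the approximating KMS${}_\be$-states can be taken on $\O(J,X)$ itself, which Theorem \ref{T:para rel} supplies precisely because $\tau|_J = 0$.
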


\begin{acknow}
The author would like to thank Marios Bounakis, David Kimsey and Gerasimos Rigopoulos for discussions on links with Physics.
The author would like also to thank Sergey Neshveyev for his very helpful remarks and corrections on a preprint of this paper.
\end{acknow}



\begin{thebibliography}{99}

\bibitem{BosCon95} J.-B. Bost and A. Connes,
\textit{Hecke algebras, type III factors and phase transitions with spontaneous symmetry breaking in number theory},
Selecta Math.\ (N.S.) \textbf{1:3} (1995), 411--457.

\bibitem{BEK80} O. Bratteli, G.A. Elliott and A. Kishimoto,
\textit{The temperature space of a C*-dynamical system I},
Yokohama Math.\ J.\ \textbf{28:1-2} (1980), 125--167.

\bibitem{BraRob87} O. Bratteli and D.W. Robinson,
\textit{Operator algebras and quantum statistical mechanics: C*- and W*-algebras. Symmetry groups. Decomposition of states},
Second Edition, Texts and Monographs in Physics, vol. 1, Springer, New York, 1987.

\bibitem{BraRob97} O. Bratteli and D.W. Robinson,
\textit{Operator algebras and quantum statistical mechanics: Equilibrium states. Models in quantum statistical mechanics},
Second Edition, Texts and Monographs in Physics, vol. 2, Springer, Berlin, 1997.

\bibitem{Bro77} L.G. Brown, 
\textit{Stable isomorphism of hereditary subalgebras of C*-algebras}, 
Pacific J.\ Math.\ \textbf{71:2} (1977), 335--348.


\bibitem{ComZet83} F. Combes and H. Zettl, 
\textit{Order structures, traces and weights on Morita equivalent C*-algebras},
Math.\ Ann.\ \textbf{265} (1983), 67--81.

\bibitem{CDL12} J. Cuntz, C. Deninger and M. Laca,
\textit{C*-algebras of Toeplitz type associated with algebraic number fields},
Math.\ Ann.\ \textbf{355:4} (2013), 1383--1423.

\bibitem{CunPed79} J. Cuntz and G.K. Pedersen, 
\textit{Equivalence and traces on periodic C*-dynamical systems}, 
J.\ Funct.\ Anal.\ \textbf{33} (1979), 135--164.

\bibitem{EFW84} M. Enomoto, M. Fujii and Y. Watatani,
\textit{KMS states for gauge action on $\O_A$},
Math.\ Japon.\ \textbf{29:4} (1984), 607--619.

\bibitem{Eva80} D.E. Evans,
\textit{On $\O_n$},
Publ.\ Res.\ Inst.\ Math.\ Sci.\ \textbf{16:3} (1980), 915--927.

\bibitem{Exe03} R. Exel, 
\textit{Crossed-products by finite index endomorphisms and KMS states}, 
J.\ Funct.\ Anal.\ \textbf{199:1} (2003), 153--183.

\bibitem{EHR11} R. Exel, A. an Huef and I. Raeburn, 
\textit{Purely infinite simple C*-algebras associated to integer dilation matrices}, 
Indiana Univ.\ Math.\ J.\ \textbf{60:3} (2011), 1033--1058. 

\bibitem{ExeLac03} R. Exel and M. Laca,
\textit{Partial dynamical systems and the KMS condition},
Comm.\ Math.\ Phys.\ \textbf{232:2} (2003), 223--277.

\bibitem{HLRS13} A. an Huef, M. Laca, I. Raeburn and A. Sims, 
\textit{KMS states on the C*-algebras of finite graphs}, 
J.\ Math.\ Anal.\ Appl.\ \textbf{405:2} (2013), 388--399.

\bibitem{HLRS15} A. an Huef, M. Laca, I. Raeburn and A. Sims, 
\textit{KMS states on the C*-algebras of reducible graphs}, 
Ergodic Theory Dynam.\ Systems \textbf{35:8} (2015), 2535--2558.

\bibitem{IKW07}
M. Izumi, T. Kajiwara and Y. Watatani, 
\textit{KMS states and branched points}, 
Ergodic Theory Dynam.\ Systems \textbf{27:6} (2007), 1887--1918.

\bibitem{Kaj09} T. Kajiwara, 
\textit{Countable bases for Hilbert C*-modules and classification of KMS states}, 
Operator Structures and Dynamical Systems, Contemporary Mathematics, \textbf{503} (2009), 73--91.

\bibitem{KajWat13} T. Kajiwara and Y. Watatani,
\textit{KMS-states on finite graph C*-algebras},
Kyushu J.\ Math.\ \textbf{67:1} (2013), 83--104.

\bibitem{Kak14a} E.T.A. Kakariadis,
\textit{A note on the gauge invariant uniqueness theorem for C*-correspondences},
Israel J.\ Math.\ {\bf 215:2} (2016), 513--521. 

\bibitem{Kak14b} E.T.A. Kakariadis,
\textit{KMS states on Pimsner algebras associated with C*-dynamical systems}, 
J.\ Funct.\ Anal.\ \textbf{269:2} (2015), 325-354. 


\bibitem{Kat04} T. Katsura,
\textit{On C*-algebras associated with C*-correspondences},
J.\ Funct.\ Anal.\ \textbf{217:2} (2004), 366--401.

\bibitem{KumRen06} A. Kumjian and J. Renalut, 
\textit{KMS states on C*-algebras associated to expansive maps}, 
Proc.\ Amer.\ Math.\ Soc.\ \textbf{134:7} (2006), 2067--2078.

\bibitem{LacNes04} M. Laca and S. Neshveyev,
\textit{KMS states of quasi-free dynamics on Pimsner algebras},
J.\ Funct.\ Anal.\ \textbf{211:2} (2004), 457--482.

\bibitem{LacNes11} M. Laca and S. Neshveyev,
\textit{Type $III_1$ equilibrium states of the Toeplitz algebra of the affine semigroup over the natural numbers},
J.\ Funct.\ Anal.\ \textbf{261:1} (2011), 169--187.

\bibitem{LacRae10} M. Laca and I. Raeburn,
\textit{Phase transition on the Toeplitz algebra of the affine semigroup over the natural numbers},
Adv.\ Math.\ \textbf{225:2} (2010), 643--688.

\bibitem{LRR11} M. Laca, I. Raeburn and J. Ramagge,
\textit{Phase transition on Exel crossed products associated to dilation matrices},
J.\ Funct.\ Anal.\ \textbf{261:12} (2011), 3633--3664.

\bibitem{LRRW13} M. Laca, I. Raeburn, J. Ramagge and M.F. Whittaker,
\textit{Equilibrium states on the Cuntz-Pimsner algebras of self-similar actions},
J.\ Funct.\ Anal.\ \textbf{266:11} (2014), 6619--6661.

\bibitem{LM95} D. Lind and B. Marcus,
\textit{An Introduction to Symbolic Dynamics and Coding},
Cambridge University Press, Cambridge, 1995.

\bibitem{MWY98} K. Matsumoto, Y. Watatani and M. Yoshida,
\textit{KMS states for gauge actions on C*-algebras associated with subshifts},
Math.\ Z.\ \textbf{228:3} (1998), 489--509.

\bibitem{NesSto06} S. Neshveyev and E. St\o rmer, 
\textit{Dynamical entropy in operator algebras},
Springer-Verlag (2006).

\bibitem{OlePed78} D. Olesen and G.K. Pedersen,
\textit{Some C*-algebras with a single KMS-state},
Math.\ Scand.\ \textbf{42:1} (1978), 111--118.

\bibitem{Ped69} G.K. Pedersen, 
\textit{Measure theory for C*-algebras, III}, 
Math.\ Scand.\ \textbf{25} (1969), 71--91.

\bibitem{Pim97} M.V. Pimsner,
\textit{A class of C*-algebras generalizing both Cuntz-Krieger algebras and crossed products by $\bZ$},
Free probability theory (Waterloo, ON, 1995), 189--212, Fields Inst. Commun., \textbf{12}, Amer. Math. Soc., Providence, RI, 1997.

\bibitem{PWY00} C. Pinzari, Y. Watatani and K. Yonetani, 
\textit{KMS States, entropy and the variational principle in full C*-dynamical systems}, 
Comm.\ Math.\ Phys.\ \textbf{213:2} (2000), 331--379.

\bibitem{Rie74} M.A. Rieffel,
\textit{Induced representations of C*-algebras},
Adv.\ Math.\ {\bf 13} (1974), 176--257.

\bibitem{RRS15} D. Robertson, J. Rout and A. Sims, 
\textit{KMS states on generalised Bunce--Deddens algebras and their Toeplitz extensions}, 
Bull.\ Malaysian Math.\ Sci.\ Soc.\ {\bf 41:1} (2015), 123--157.

\bibitem{Tho11} K. Thomsen,
\textit{On the C*-algebra of a locally injective surjection and its KMS states},
Comm.\ Math.\ Phys.\ \textbf{302:2} (2011), 403--423.

\bibitem{Tho12} K. Thomsen,
\textit{KMS states and conformal measures},
Comm.\ Math.\ Phys.\ \textbf{316:3} (2012), 615--640.


\end{thebibliography}
\end{document}